\newtheorem{theorem} {Theorem} [section]
\newtheorem{proposition}[theorem]{Proposition}
\newtheorem{corollary}  [theorem]     {Corollary}
\newtheorem{lemma}  [theorem]     {Lemma}
\newtheorem{example}  [theorem]     {Example}
\newtheorem{question}  [theorem]     {Question}
\newtheorem{remark}  [theorem]     {Remark}
\newtheorem{definition}  [theorem]     {Definition}
\newtheorem{observation}  [theorem]     {Observation}
\newcommand{\w}{\wedge}
\renewcommand{\1}{\mathds{1}}
\newcommand{\G}{\mathbb{G}}
\newcommand{\db}{\overline{\partial}}
\newcommand{\lc}{\lrcorner}
\newcommand{\Om}{\Omega}
\newcommand{\btheorem}{\begin{theorem}}
\newcommand{\etheorem}{\end{theorem}}
\newcommand{\bproposition}{\begin{proposition}}
\newcommand{\eproposition}{\end{proposition}}
\newcommand{\bdefinition}{\begin{definition}}
\newcommand{\edefinition}{\end{definition}}
\newcommand{\bcorollary}{\begin{corollary}}
\newcommand{\ecorollary}{\end{corollary}}
\newcommand{\bproof}{\begin{proof}}
\newcommand{\eproof}{\end{proof}}
\newcommand{\beq}{\begin{equation}}
\newcommand{\eeq}{\end{equation}}
\newcommand{\ee}{\end{eqnarray*}}
\newcommand{\be}{\begin{eqnarray*}}
\newcommand{\elemma}{\end{lemma}}
\newcommand{\blemma}{\begin{lemma}}
\renewcommand{\>}{\rightarrow}
\newcommand{\p}{\partial}
\newcommand{\im}{\textmd{im}}
\def\p{\partial}
\def\o{\overline}
\def\b{\bar}
\def\mc{\mathcal}
\def\w{\wedge}
\def\l{\lrcorner}
\begin{document}

\title{On local stabilities of $p$-K\"{a}hler structures}
\author{Sheng Rao}
\email{likeanyone@whu.edu.cn, raoshengmath@gmail.com}
\address{School of Mathematics and Statistics, Wuhan University,
Wuhan 430072, China.}
\author{Xueyuan Wan}
\email{xwan@chalmers.se}
\address{Mathematical Sciences, Chalmers University of Technology, University of Gothenburg, 412 96 Gothenburg, Sweden.}
\author{Quanting Zhao}
\email{zhaoquanting@126.com; zhaoquanting@mail.ccnu.edu.cn}
\address{School of Mathematics and Statistics \&
Hubei Key Laboratory of Mathematical Sciences, Central China Normal
University, Wuhan 430079, China.}

\classification{32G05 (primary), 13D10, 14D15, 53C55 (secondary)}
\keywords{Deformations of complex structures; Deformations and
infinitesimal methods, Formal methods; deformations, Hermitian and
K\"ahlerian manifolds}
\thanks{Rao is partially supported by NSFC (Grant No. 11671305, 11771339).
Zhao is partially supported by China Postdoctoral Science
Foundation and NSFC (Grant No. 2016M592356 and 11801205).}

\date{\today}

\begin{abstract}
By use of a natural extension map and a power series method,
we obtain a local stability theorem for $p$-K\"{a}hler structures with the
$(p,p+1)$-th mild $\p\db$-lemma under small differentiable deformations.
\end{abstract}

\maketitle

\vspace*{6pt}\tableofcontents  

\section{Introduction}\label{intro}
Local stabilities of complex structures are important topics in deformation theory of complex structures. We will prove local stabilities of $p$-K\"{a}hler structures with the $(p,p+1)$-th mild $\p\b{\p}$-lemma by power series method, initiated by Kodaira-Nirenberg-Spencer \cite{kns} and Kuranishi \cite{ku}.
\begin{theorem}\label{mt}
For any positive integer $p\leq n-1$, any
small differentiable deformation $X_t$ of  an $n$-dimensional
$p$-K\"ahler manifold $X_0$ satisfying  the
$(p,p+1)$-th mild $\p\db$-lemma is still
$p$-K\"ahlerian.
\end{theorem}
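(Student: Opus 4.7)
The plan is to construct, for each sufficiently small $t$, a $d_t$-closed strictly positive real $(p,p)$-form on $X_t$ by deforming the given $p$-K\"ahler form $\Omega_0$ through a power series in $t$, following the Kodaira--Nirenberg--Spencer framework. The essential ingredients will be a natural linear extension operator that identifies $(p,p)$-forms on $X_0$ with $(p,p)$-forms on $X_t$, and the $(p,p+1)$-th mild $\partial\bar\partial$-lemma, which is used at each stage of the iteration to solve the obstruction equation.

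Let $\varphi(t) \in A^{0,1}(X_0, T^{1,0}X_0)$ be the Beltrami differential parameterizing the deformation. The fiberwise extension operator built from contractions by $\varphi(t)$ and $\overline{\varphi(t)}$ yields an isomorphism from $A^{p,p}(X_0)$ onto $A^{p,p}(X_t)$. I would look for the desired form in the shape
\[
\Omega_t \;=\; e^{i_{\varphi(t)} \,|\, i_{\overline{\varphi(t)}}}\!\Bigl(\Omega_0 + \sum_{k \geq 1} t^k \Omega_k\Bigr),
\]
with unknown real $(p,p)$-forms $\Omega_k$ on $X_0$. Using the commutation rule that transports $d_t$ back along this extension, the condition $d_t \Omega_t = 0$ translates into an infinite system of equations on $X_0$. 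Expanded order by order in $t$, it reads schematically
\[
\bar\partial \Omega_k \;=\; B_k(\Omega_0, \ldots, \Omega_{k-1}; \varphi(t)),
\]
together with the complex-conjugate $\partial$-equation (automatic from reality), where $B_k$ is a universal polynomial expression in the lower-order unknowns and the Beltrami differential.

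The heart of the argument, and the main obstacle, is to verify by induction on $k$ that the obstruction $B_k$ is a simultaneously $\partial$-closed and $\bar\partial$-exact $(p,p+1)$-form. Once both properties are established, the $(p,p+1)$-th mild $\partial\bar\partial$-lemma produces some $\gamma_k$ on $X_0$ with $B_k = \partial\bar\partial \gamma_k$, and then a suitable real combination built from $\gamma_k$ and $\overline{\gamma_k}$ cancels the obstruction and defines $\Omega_k$. The technical work lies in using the Maurer--Cartan equation $\bar\partial \varphi = \tfrac{1}{2}[\varphi, \varphi]$, together with commutator identities among $\partial$, $\bar\partial$, $i_{\varphi}$, and $i_{\bar\varphi}$, to rewrite $B_k$ in the required form. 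The inductive hypothesis supplies the vanishing of all lower-order $B_j$, which is what makes the cancellations work.

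To complete the proof, I would fix the gauge by selecting each $\gamma_k$ orthogonal to $\ker \bar\partial^{*}$ via Hodge theory, thereby obtaining elliptic estimates in a suitable H\"older norm. Standard Kuranishi-type majorization then shows that the series $\sum_{k \geq 1} t^k \Omega_k$ converges for $|t|$ small. Since the extension operator reduces to the identity at $t = 0$, the resulting form $\Omega_t$ is a small perturbation of $\Omega_0$, and the strict transversality which defines the $p$-K\"ahler condition is an open property, hence persists for sufficiently small $t$. The $d_t$-closed real $(p,p)$-form $\Omega_t$ is therefore $p$-K\"ahlerian on $X_t$.
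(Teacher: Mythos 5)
Your overall strategy---pushing $\Omega_0$ to $X_t$ via the extension operator $e^{\iota_{\varphi}|\iota_{\overline{\varphi}}}$, solving the $d$-closedness condition order by order with the mild $\p\db$-lemma, and finishing with the openness of transversality---is the same as the paper's. But the central step is stated with the roles of $\p$ and $\db$ interchanged, and this is not a cosmetic slip. You propose to show that the obstruction $B_k$ is \emph{$\p$-closed and $\db$-exact} and then to apply the $(p,p+1)$-th mild $\p\db$-lemma. That lemma takes as input a $(p,p+1)$-form which is \emph{$\p$-exact and $\db$-closed}; a $\p$-closed, $\db$-exact form is instead the input of the \emph{dual} mild $\p\db$-lemma, and the paper points out (Observation \ref{s-eq} and the discussion around Example \ref{ex-dms-notms}) that the mild and dual mild lemmata are logically independent conditions. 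So with the properties as you state them, the hypothesis of the theorem does not allow you to conclude $\p\db$-exactness. The correct structure is the opposite one: after the substitution $\tilde{\Omega}\mapsto\tilde{\Omega}'=\tilde{\Omega}+\sum_{k\geq 1}\frac{\iota^{k}_{\varphi}}{k!}\circ\frac{\iota^{k}_{(\1-\overline{\varphi}\varphi)^{-1}\overline{\varphi}}}{k!}(\tilde{\Omega})$, the obstruction becomes \emph{manifestly $\p$-exact}, and the genuine inductive content is to prove that it is $\db$-closed (this is \eqref{2.1}); only then does the mild lemma upgrade it to a $\p\db$-exact form, which is what makes the $\db$-equation solvable at all. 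If, as you claim, $B_k$ were already known to be $\db$-exact, no lemma would be needed for that single equation---the lemma is needed precisely because the obstruction does not present itself as $\db$-exact. Your sketch gives no mechanism for producing either the $\p$-exact normal form or the $\db$-exactness you assert.

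Two further gaps. First, the conjugate $\p$-equation is not ``automatic from reality'': it is a second constraint on the \emph{same} unknown, and a generic solution of the $\db$-equation need not be real, nor does taking real parts preserve the equation. One must solve the coupled system $\p x=\db\zeta$, $\db x=\p\overline{\xi}$ simultaneously, which is exactly what the $\p\db$-exact normal form permits via $x=\db\mu+\p\nu$ as in Lemma \ref{slvdb-1}; for $p\neq q$ this requires \emph{both} the $(p,q+1)$- and $(q,p+1)$-th mild lemmata, which merely happen to coincide when $p=q$. Second, written on $X_0$ the condition $d(e^{\iota_{\varphi}|\iota_{\overline{\varphi}}}(\Omega))=0$ is a priori the large system \eqref{1.4} spread over many $X_0$-bidegrees; reducing it to the two equations \eqref{1.6} of bidegrees $(p+1,q)$ and $(p,q+1)$ is Proposition \ref{eff}, which needs its own induction interleaved with the one solving the equations, and your ``schematically'' elides this. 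A minor point: the expansion must be a power series in $t$ and $\overline{t}$ jointly, not a holomorphic series $\sum_k t^k\Omega_k$.
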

Here the
\emph{$(p,p+1)$-th mild $\p\db$-lemma} for a complex manifold means that each $\db$-closed $\p$-exact $(p,p+1)$-form on this manifold is $\p\db$-exact, which is a new notion generalizing the
$(n-1,n)$-th one first introduced in \cite{RwZ}. A complex manifold is \emph{$p$-K\"{a}hlerian} if it admits a \emph{$p$-K\"{a}hler form}, i.e., a $d$-closed transverse $(p,p)$-form as in Definition \ref{pkf}.

Recall the fact that each $n$-dimensional complex manifold is $n$-K\"ahlerian and two basic properties of $p$-K\"ahlerian structures:
\begin{lemma}[{\cite[Proposition 1.15]{aa} and also \cite[Corollary 4.6]{RwZ}}]\label{pkb}
A complex manifold $M$ is $1$-K\"ahler if and only if $M$ is
K\"ahler;  an $n$-dimensional complex manifold $M$ is $(n-1)$-K\"ahler if and only if $M$ is \emph{balanced}, i.e., it admits a real positive $(1,1)$-form $\omega$, satisfying
$$d(\omega^{n-1})=0.$$
\end{lemma}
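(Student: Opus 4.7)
The plan is to prove both equivalences by reducing each to a pointwise linear-algebra statement about positive forms on a complex vector space, and then reinserting the $d$-closedness condition globally.

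First I would unpack the meaning of \emph{transverse $(p,p)$-form} from Definition \ref{pkf}: at each point, $\Omega$ is transverse if $\Omega \wedge i^{(n-p)^2}\, \alpha \wedge \bar\alpha$ is a positive volume form for every nonzero decomposable $(n-p,0)$-form $\alpha$. For $p=1$ the test forms $\alpha$ are arbitrary nonzero $(n-1,0)$-forms, and a standard computation (pick a unitary coframe and Hodge-dualize) shows this condition is equivalent to $\Omega$ being a strictly positive real $(1,1)$-form in the usual sense. Thus a $1$-K\"ahler form is exactly a K\"ahler form, and the first equivalence is immediate from the definitions.

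For the second equivalence, the forward direction is the substantive one. An $(n-1)$-K\"ahler form $\Omega$ is by definition a $d$-closed real transverse $(n-1,n-1)$-form. The key ingredient is a pointwise linear-algebra result (due to Michelsohn): on a complex vector space of dimension $n$, the map $\omega \mapsto \omega^{n-1}$ is a bijection between the open cone of strictly positive $(1,1)$-forms and the open cone of strictly positive $(n-1,n-1)$-forms. Accepting this, every transverse $(n-1,n-1)$-form $\Omega$ admits at each point a unique positive $(1,1)$-form $\omega$ with $\omega^{n-1}$ proportional to $\Omega$; smoothness of $\omega$ follows because the $(n-1)$-th root is a real-analytic map on the open positive cone (or, concretely, from the implicit function theorem applied to $\omega \mapsto \omega^{n-1}$, whose linearization $\eta \mapsto (n-1)\,\omega^{n-2}\wedge \eta$ is an isomorphism on primitive-plus-trace decompositions at any positive $\omega$). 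Then $d\Omega=0 \iff d(\omega^{n-1})=0$, giving the balanced condition. Conversely, if $\omega$ is a positive $(1,1)$-form with $d(\omega^{n-1})=0$, then $\omega^{n-1}$ is automatically transverse by the same bijection, hence an $(n-1)$-K\"ahler form.

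The main obstacle is the linear-algebra bijection $\omega \leftrightarrow \omega^{n-1}$ between positive cones. I would establish it by diagonalizing the positive $(n-1,n-1)$-form $\Omega$ with respect to an auxiliary Hermitian inner product: in a suitable unitary coframe $\{dz_j\}$ one can write
\[
\Omega \;=\; \sum_{j=1}^n \lambda_j\, \widehat{\bigl(i\, dz_j \wedge d\bar z_j\bigr)},
\]
where $\widehat{\cdot}$ denotes the Hodge-type operator extracting the complementary $(n-1,n-1)$-form, and all $\lambda_j>0$ by transversality. One then sets $\omega := \sum_j \mu_j\, i\, dz_j \wedge d\bar z_j$ with $\mu_j$ determined algebraically from the $\lambda_j$ so that $\omega^{n-1}$ matches $\Omega$ up to the combinatorial constant $(n-1)!$. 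Uniqueness and smoothness of the resulting $\omega$ follow from the invertibility of this algebraic system on the positive cone. Since this is exactly the content cited from \cite{aa}, one may alternatively quote it directly, as the statement of the lemma does.
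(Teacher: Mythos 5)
Your proposal is correct, but there is nothing in the paper to compare it against: the paper states this lemma purely by citation (Alessandrini--Andreatta and \cite{RwZ}) and gives no proof of its own. Your argument is exactly the standard one underlying those citations --- the pointwise identification of transversality with strict positivity in the extreme bidegrees $(1,1)$ and $(n-1,n-1)$, together with Michelsohn's $(n-1)$-th root bijection $\omega \mapsto \omega^{n-1}$ between the positive cones (including the diagonalization proof of that bijection and the Lefschetz-type isomorphism giving smoothness of the root) --- so it faithfully fills in the content the paper takes as known.
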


Thus, as a direct corollary of Theorem \ref{mt}, we obtain:
\begin{corollary}\label{cor-mt}
Let $\pi: \mathcal{X} \rightarrow B$ be a differentiable family of compact complex manifolds.
\begin{enumerate}[$(i)$]
    \item {{\emph{\textbf{$($\cite[Theorem 15]{KS}$)$}}}\label{stab-Kahler}
If a fiber $X_0:= \pi^{-1}(t_0)$ admits a K\"ahler metric, then, for a sufficiently small neighborhood $U$ of $t_0$ on $B$, the fiber $X_t:=\pi^{-1}(t)$ over any point $t\in U$ still admits a K\"ahler metric, which depends smoothly on $t$ and coincides for $t=t_0$ with the given K\"ahler metric on $X_0$.}
    \item {\emph{\textbf{$($\cite[Theorem 1.5]{RwZ}$)$}} \label{balanced-Kahler}
    Let $X_0$ be a balanced manifold of complex dimension $n$,
satisfying the $(n-1,n)$-th mild $\p\db$-lemma. Then $X_t$ also
admits a balanced metric for $t$ small.}
 \end{enumerate}
\end{corollary}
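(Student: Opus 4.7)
The plan is to derive both items of Corollary \ref{cor-mt} as direct specializations of Theorem \ref{mt}, using Lemma \ref{pkb} to translate the $p$-K\"ahler condition at the two extreme values $p=1$ and $p=n-1$ back into the classical K\"ahler and balanced notions respectively.

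For item \emph{(i)} I would set $p=1$ (the case $n=1$ is vacuous, since every Riemann surface is K\"ahler). Lemma \ref{pkb} identifies ``$X_0$ admits a K\"ahler metric'' with ``$X_0$ is $1$-K\"ahlerian'', so the underlying $p$-K\"ahler hypothesis of Theorem \ref{mt} is immediate. The remaining input to verify is that every compact K\"ahler manifold satisfies the $(1,2)$-th mild $\p\db$-lemma, that is, that every $\db$-closed, $\p$-exact $(1,2)$-form is $\p\db$-exact. This is a particular case of the classical $\p\db$-lemma on compact K\"ahler manifolds, itself a standard consequence of the K\"ahler identities and Hodge theory. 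Theorem \ref{mt} then produces a $1$-K\"ahler, hence K\"ahler, form on each nearby $X_t$. The smoothness in $t$ and the matching with the given K\"ahler form at $t=t_0$ should be read off from the power-series construction used to prove Theorem \ref{mt}, which by design yields a form of the shape ``given form on $X_0$ plus a higher-order correction in $t$''.

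For item \emph{(ii)} I would take $p=n-1$. Lemma \ref{pkb} identifies $(n-1)$-K\"ahler with balanced, and the assumed $(n-1,n)$-th mild $\p\db$-lemma is literally the hypothesis of Theorem \ref{mt} in this range; a single application of Theorem \ref{mt} finishes the proof.

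All the genuine content is packed into Theorem \ref{mt}; the corollary itself is a bookkeeping exercise. The only thing that might be called an obstacle is the verification in item \emph{(i)} that the classical K\"ahler property supplies the $(1,2)$-th mild $\p\db$-lemma, and this is a routine consequence of the standard $\p\db$-lemma on compact K\"ahler manifolds.
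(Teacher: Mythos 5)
Your proposal is correct and follows exactly the route the paper intends: Corollary \ref{cor-mt} is obtained by specializing Theorem \ref{mt} to $p=1$ and $p=n-1$ via Lemma \ref{pkb}, with the $(1,2)$-th mild $\p\db$-lemma in the K\"ahler case supplied by the standard $\p\db$-lemma, and the smoothness/initial-value claims read off from the power-series construction of the extended form. Nothing is missing.
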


The first assertion of Corollary \ref{cor-mt} is the fundamental Kodaira-Spencer's local stability theorem of K\"ahler structure, and motivates the second one of Corollary \ref{cor-mt} and many other related works on local stabilities of
complex structures in \cite{FY,V,w,au,AU}. The counter-example of L. Alessandrini-G. Bassanelli \cite{ab} tells us that the result in the second assertion of Corollary \ref{cor-mt} does not necessarily hold without the $(n-1,n)$-th mild $\p\db$-lemma assumption.

In Section \ref{dpm}, we will study the difference between the
$(p,q)$-th mild $\p\db$-lemma and other versions of $\p\db$-lemmata in the roles of Theorem \ref{mt}, and the modification stability of the
$(p,q)$-th mild $\p\db$-lemma by Proposition \ref{mefm}, which provides us more classes of complex manifolds to admit the $(p,q)$-th mild $\p\db$-lemma.
Here the (standard) \emph{$\p\b{\p}$-lemma} refers
to: for every pure-type $d$-closed form on a complex manifold, the properties of
$d$-exactness, $\p$-exactness, $\b{\p}$-exactness and
$\p\b{\p}$-exactness are equivalent, while its variants are described by Subsection \ref{vddbar}.
Obviously, one has the implication hierarchy on a complex $n$-dimensional manifold for any positive integer $p\leq n-1$:
\begin{align}
&\text{the $\p\db$-lemma} \nonumber\\
\Longrightarrow\ &\text{the $(p, p + 1)$-th strong $\p\db$-lemma}\label{s2s}\\
\Longrightarrow\ &\text{the $(p, p + 1)$-th mild  $\p\db$-lemma}\label{s2m}\\
\Longrightarrow\ &\text{the $(p, p + 1)$-th weak  $\p\db$-lemma}\label{m2w}.
\end{align}
For $p=n-1$, the implication hierarchy is strict: \cite[Example 4.10]{au} is the one, satisfying the strong $\p\db$-lemma but not the standard one; the nilmanifold endowed with a left-invariant abelian complex structure of dimension $2n$ or a left-invariant non-nilpotent balanced complex structure of complex dimension $3$ by \cite[Proposition 3.8 and Corollary 3.4]{RwZ} and \cite[Proposition 2.9]{AU} distinguishes the mild and strong $\p\db$-lemmata; and the weak $\p\db$-lemma holds on the complex three-dimensional Iwasawa manifold \cite[Example 3.7]{RwZ} but the mild one fails. Moreover, we construct a new ten-dimensional balanced nilmanifold in Example \ref{bcvary} for the strictness of Implication \ref{s2m}, which satisfies the $(4, 5)$-th mild but not strong $\p\b{\p}$-lemma and also the deformation variance of the $(4, 4)$-th Bott-Chern numbers.
Motivated by these, one is natural to ask:
\begin{question}
\emph{Find an $n$-dimensional complex manifold or in particular a $p$-K\"ahler manifold
such that one of Implications \eqref{s2s}, \eqref{s2m}, \eqref{m2w} is strict for each positive integer $p< n-1$.}
\end{question}

Now let us describe our approach to prove
local stability of $p$-K\"{a}hler structures. An application of Kuranishi's completeness theorem \cite{ku} reduces our power series proof to the Kuranishi family $\varpi:\mc{K}\to T$, that is, we will construct a natural $p$-K\"{a}hler extension
$\tilde{\omega}_t$ of the $p$-K\"{a}hler form $\omega_0$ on $X_0$, such that $\tilde{\omega}_t$ is a
$p$-K\"{a}hler form on the general fiber $\varpi^{-1}(t)=X_t$. More
precisely, the extension is given by
$$e^{\iota_{\varphi}|\iota_{\o{\varphi}}}: A^{p,p}(X_0)\to A^{p,p}(X_t),\quad \omega_0\to \tilde{\omega}_t:=e^{\iota_{\varphi}|\iota_{\o{\varphi}}}(\omega(t)),$$
where $\omega(t)$ is a family of smooth $(p,p)$-forms to be constructed on $X_0$,
depending smoothly on $t$, and $\omega(0)=\omega_0$. Here $\varphi$ is the family of Beltrami differentials induced by the Kuranishi family.
The extension map $e^{\iota_{\varphi}|\iota_{\o{\varphi}}}$ is first introduced in \cite{RZ2,RZ15} and given in Definition \ref{map}.

This method
is developed in
\cite{LSY,Sun,SY,lry,RZ,RZ2,RZ15,RwZ,lrw}. However, we have to solve many more equations in the system \eqref{1.4} here than in the balanced case \cite{RwZ}, which are much more difficult in essence. Fortunately, one is able to reduce this complicated system to that with only two ones as \eqref{1.6} by comparing the types of the forms in the system and the orders in the induction simultaneously. This crucial consideration is also important in the solution of this system.

In this approach, we will use the observation crucially:
\begin{proposition}[{\cite[Proposition 4.12]{RwZ}}]\label{ext-trans}
Let $\pi: \mathcal{X} \> B$ be a  differentiable family of compact complex $n$-dimensional manifolds and
$\Omega_t$  a family of real $(p,p)$-forms with $p< n$, depending smoothly on $t$. Assume that
$\Omega_0$ is a transverse $(p,p)$-form on $X_0$. Then $\Omega_t$ is also transverse on $X_t$ for small $t$.
\end{proposition}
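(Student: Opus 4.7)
The plan is to view transversality as a quantitatively open, pointwise condition and combine this with compactness of $X_0$ and of the relevant Grassmannian of decomposable forms. Recall (Definition \ref{pkf}) that a real $(p,p)$-form $\Omega$ on a complex $n$-manifold is \emph{transverse} precisely when, at every point $x$ and for every nonzero decomposable $(n-p,0)$-form $\alpha=\alpha_1\wedge\cdots\wedge\alpha_{n-p}$, the top-degree real form $\Omega(x)\wedge i^{(n-p)^2}\alpha\wedge\bar{\alpha}$ is a strictly positive multiple of any fixed volume element at $x$. Since $\pi:\mathcal{X}\to B$ is a differentiable family, it can be locally trivialized by an ambient diffeomorphism, so we may arrange that all fibers $X_t$ share the underlying smooth manifold $X_0$, with only the integrable complex structure $J_t$ varying smoothly in $t$.

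After trivializing, introduce the Grassmann bundle $\mathcal{G}_t\to X_0$ of $J_t$-complex $(n-p)$-dimensional subspaces of the cotangent bundle, and within it the compact subset $K_t$ of unit decomposable $(n-p,0)$-forms modulo $S^1$; both depend smoothly on $t$. Fix once and for all a smooth positive volume form $\mathrm{vol}$ on $X_0$ and define
\[
f_t(x,[\alpha])\;:=\;\frac{\Omega_t(x)\wedge i^{(n-p)^2}\alpha\wedge\bar{\alpha}}{\mathrm{vol}(x)},
\]
a jointly continuous real-valued function in $(t,x,[\alpha])$ on the total space of $K_t$. Transversality of $\Omega_t$ is then equivalent to $f_t>0$ on all of $K_t$. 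By hypothesis $f_0>0$ on the compact space $K_0$, so there exists a uniform constant $c_0>0$ with $f_0\geq c_0$.

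Since $\Omega_t$ and $J_t$ (and hence $K_t$ and $f_t$) depend smoothly on $t$ and the total space $\bigcup_t K_t$ fibers continuously over a neighborhood of $0\in B$ with compact fibers, a standard tube-lemma argument yields $f_t\geq c_0/2>0$ uniformly on $K_t$ for all sufficiently small $t$, which is exactly the required transversality of $\Omega_t$ on $X_t$. The only conceptual subtlety is that the notions of $(p,p)$-type and of a decomposable $(n-p,0)$-form depend on the varying $J_t$; the hypothesis already places $\Omega_t$ in the correct type, and the varying Grassmannian of $J_t$-subspaces is tracked automatically by the smooth family structure, so no genuine analytic obstacle remains beyond setting up this parametrized framework.
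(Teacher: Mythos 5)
Your argument is correct: transversality is a pointwise, scale-invariant strict inequality, so after trivializing the family it suffices to check uniform positivity of the continuous function $f$ on the compact bundle of projectivized unit decomposable $(n-p,0)_{J_t}$-forms over the compact fibers, and the tube-lemma/compactness step you invoke closes the argument. The paper itself does not reproduce a proof (it quotes the statement from \cite[Proposition 4.12]{RwZ}), but the Pl\"ucker-embedding and compact-Grassmannian framework it recalls in Section 2 is precisely the setup of the cited proof, which proceeds by the same openness-via-compactness reasoning, so your proposal matches the intended approach.
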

This proposition actually shows that any smooth real extension of a transverse $(p,p)$-form is still transverse.
So the obstruction to extend a $d$-closed transverse $(p,p)$-form on a compact complex manifold
lies in the $d$-closedness, to be resolved in Theorem \ref{0pq-sm-ext} in a more general setting. The detailed proof of Main Theorem \ref{mt} is given in Section \ref{psp}.

  \begin{theorem}[=Theorem \ref{pq-sm-ext}]\label{0pq-sm-ext}
  If $X_0$ satisfies the $(p,q+1)$- and $(q,p+1)$-th mild $\p\b{\p}$-lemmata, then there is a $d$-closed $(p,q)$-form $\Omega(t)$ on $X_t$ depending smoothly on $t$ with $\Omega(0)=\Omega_0$ for any  $d$-closed $\Omega_0\in A^{p,q}(X_0)$.
  \end{theorem}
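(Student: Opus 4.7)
The plan is to construct $\Omega(t)$ as a power series $\Omega(t)=\Omega_0+\sum_{k+l\geq 1}\Omega_{k,l}\,t^k\overline{t}^l$ of $(p,q)$-forms on $X_0$ and arrange that its extension $\tilde\Omega(t):=e^{\iota_{\varphi}|\iota_{\overline{\varphi}}}(\Omega(t))$ to $X_t$ is $d$-closed. The first step is to translate the equation $d\tilde\Omega(t)=0$ on $X_t$ into a pair of equations on $X_0$: splitting $d_t$ by type on $X_t$ yields components $\p_t\tilde\Omega(t)\in A^{p+1,q}(X_t)$ and $\db_t\tilde\Omega(t)\in A^{p,q+1}(X_t)$, each of which must vanish separately. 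Using the intertwining formulae for $e^{\iota_{\varphi}|\iota_{\overline{\varphi}}}$ developed in \cite{RZ2,RZ15,RwZ}, these transport to two equations on $X_0$ whose leading terms are $\db\Omega(t)$ and $\p\Omega(t)$, with correction terms involving $\iota_{\varphi}$, $\iota_{\overline{\varphi}}$ and the Beltrami differential $\varphi(t)$ of the Kuranishi family.

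Expanding these two equations order by order in $(k,l)$ gives a recursion of the form
\begin{equation*}
\db\Omega_{k,l}=F_{k,l}\in A^{p,q+1}(X_0),\qquad \p\Omega_{k,l}=G_{k,l}\in A^{p+1,q}(X_0),
\end{equation*}
where $F_{k,l}$ and $G_{k,l}$ are universal polynomial expressions in the lower-order $\Omega_{i,j}$ and in the components $\varphi_k$. The inductive claim, verified by differentiating the previous-order identities, is that $F_{k,l}$ is $\db$-closed and $\p$-exact, $G_{k,l}$ is $\p$-closed and $\db$-exact, and the compatibility $\p F_{k,l}=\db G_{k,l}$ holds. The $(p,q+1)$-th mild $\p\db$-lemma then produces an $\eta$ with $\p\db\eta=F_{k,l}$, while the $(q,p+1)$-th mild $\p\db$-lemma, applied to the conjugate $\overline{G_{k,l}}\in A^{q,p+1}(X_0)$, produces the analogous primitive for the $\p$-equation. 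To package these into a single $(p,q)$-form $\Omega_{k,l}$ solving both equations simultaneously, I would select canonical $\p\db$-primitives via the Green's operator of the Bott-Chern Laplacian, which yields the unique representative orthogonal to the harmonic space and depends continuously on the data.

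The hardest point, and the precise reason both mild $\p\db$-lemmata are indispensable, is resolving the compatibility between the $\db$- and $\p$-equations at each order and producing one $(p,q)$-form $\Omega_{k,l}$ that satisfies them simultaneously — the mild $\p\db$-lemma alone gives existence of a $\p\db$-primitive but not uniqueness, so consistency between the solutions to the two equations must be enforced by the canonical Green's-operator choice. Once this is in place, smooth dependence of $\Omega_{k,l}$ on $t$ follows from elliptic regularity of the Bott-Chern Laplacian, and the formal series converges in $C^\infty$ by a standard Kodaira-Spencer-Kuranishi majorant argument, exactly as in the balanced case \cite{RwZ}.
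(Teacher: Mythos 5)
Your proposal follows essentially the same route as the paper's proof: extend via $e^{\iota_{\varphi}|\iota_{\overline{\varphi}}}$, reduce $d$-closedness to the two type-component equations, solve them order by order using the $(p,q+1)$- and $(q,p+1)$-th mild $\p\db$-lemmata with canonical $(\p\db)^*\G_{BC}$-primitives, and conclude by elliptic estimates. One adjustment of emphasis: the simultaneous solvability of the two equations is not what the Green's-operator choice enforces --- writing the solution as $\db\mu+\p\nu$ decouples them automatically since $\p(\p\nu)=0$ and $\db(\db\mu)=0$ (Lemma \ref{slvdb-1}) --- whereas the genuinely delicate step, which you compress into ``differentiating the previous-order identities,'' is verifying the solvability conditions at each order, and in the paper this requires upgrading the induction hypothesis from the two equations to the vanishing of \emph{all} bidegree components of $d(e^{\iota_{\varphi}|\iota_{\overline{\varphi}}}(\Omega))$ at lower orders (Lemma \ref{lemma1} and Proposition \ref{eff}).
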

  \begin{remark}\label{n-1-d-clsd-ext}
  \emph{The case $p=q=n-1$ of Theorem \ref{0pq-sm-ext} implies that the dimension of the space of d-closed left-invariant $(n-1,n-1)$-forms
  on a $2n$-dimensional nilmanifold endowed with a left-invariant abelian complex structure is deformation invariant,
  where the $(n-1,n)$-th mild $\p\b{\p}$-lemma holds from \cite[Corollary 3.4]{RwZ}.}
  \end{remark}

In Section \ref{bcinv}, inspired by \cite{RZ15}, we will use Theorem \ref{0pq-sm-ext} to prove a result on deformation invariance of Bott-Chern numbers in Theorem \ref{inv-pq}.

This paper will follow the notations in \cite{lry,RZ15,RwZ}. All manifolds in this paper are assumed to be compact
complex $n$-dimensional manifolds. The symbol $A^{p,q}(X,E)$ stands for the space of
the holomorphic vector bundle $E$-valued $(p,q)$-forms on a
complex manifold $X$. We will always consider the differentiable family $\pi: \mathcal{X} \rightarrow
B$ of compact complex $n$-dimensional manifolds
over a sufficiently small domain in $\mathbb{R}^k$ with the
reference fiber $X_0:= \pi^{-1}(0)$ for the reference point $0$ and the general fibers $X_t:=
\pi^{-1}(t).$

\section{Deformation and $p$-K\"{a}hler structure}\label{dpm}
This section is to state some basics of analytic deformation theory of complex structures and the notion of $p$-K\"{a}hler structure.
\subsection{Deformation theory}
For holomorphic family of compact complex manifolds, we adopt the definition \cite[Definition 2.8]{k}; while for differentiable one, we adopt:
\begin{definition}[{\cite[Definition 4.1]{k}}] \emph{Let $\mc{X}$ be a differentiable manifold, $B$ a domain of $\mathbb{R}^k$ and $\pi$ a smooth map of $\mc{X}$ onto $B$.
By a \emph{differentiable family of $n$-dimensional compact complex manifolds} we mean the triple $\pi:\mc{X}\to B$ satisfying the following conditions:
\begin{enumerate}[$(i)$]
    \item
The rank of the Jacobian matrix of $\pi$ is equal to $k$ at every point of $\mc{X}$;
    \item
For each point $t\in B$, $\pi^{-1}(t)$ is a compact connected subset of $\mc{X}$;
    \item
$\pi^{-1}(t)$ is the underlying differentiable manifold of the $n$-dimensional compact complex manifold $X_t$ associated to each $t\in B$;
     \item
There is a locally finite open covering $\{\mathcal{U}_j\ |\ j=1,2,\cdots\}$ of $\mc{X}$ and complex-valued smooth functions $\zeta_j^1(p),\cdots,\zeta_j^n(p)$, defined on $\mathcal{U}_j$
such that for each $t$, $$\{p\rightarrow (\zeta_j^1(p),\cdots,\zeta_j^n(p))\ |\ \mathcal{U}_j\cap \pi^{-1}(t)\neq \emptyset\}$$ form a system of local holomorphic coordinates of $X_t$.
\end{enumerate}}
\end{definition}

Beltrami differential plays an important role in deformation theory.
A \emph{Beltrami differential} on $X$, generally
denoted by $\phi$, is an element in
$A^{0,1}(X, T^{1,0}_X)$, where $T^{1,0}_X$ is the holomorphic
tangent bundle of $X$. Then $\iota_\phi$ or $\phi\lrcorner$ denotes
the contraction operator with respect to $\phi\in A^{0,1}(X,T^{1,0}_X)$ or
other analogous vector-valued complex differential forms alternatively if there
is no confusion. We also use the convention
\begin{equation}\label{0e-convention}
e^{\spadesuit}=\sum_{k=0}^\infty \frac{1}{k!} \spadesuit^{k},
\end{equation}
where $\spadesuit^{k}$ denotes $k$-time action of the operator
$\spadesuit$. As the dimension of $X$ is finite, the summation in
the above formulation is always finite.

We will always consider the differentiable family $\pi:\mathcal{X} \rightarrow
B$ of compact complex $n$-dimensional manifolds
over a sufficiently small domain in $\mathbb{R}^k$ with the
reference fiber $X_0:= \pi^{-1}(0)$ and the general fibers $X_t:=
\pi^{-1}(t).$ For simplicity we set $k=1$. Denote by
$\zeta:=(\zeta^\alpha_j(z,t))$ the holomorphic coordinates of $X_t$
induced by the family with the holomorphic coordinates $z:=(z^i)$ of
$X_0$, under a coordinate covering $\{\mathcal{U}_j\}$ of
$\mathcal{X}$, when $t$ is assumed to be fixed, as the standard
notions in deformation theory described at the beginning of
\cite[Chapter 4]{MK}. This family induces a canonical differentiable family of
integrable Beltrami differentials on $X_0$, denoted by $\varphi(z,t)$,
$\varphi(t)$ and $\varphi$ interchangeably.

In \cite{RZ2,RZ}, the first and third authors introduced an extension
map
$$e^{\iota_{\varphi(t)}|\iota_{\overline{\varphi(t)}}}:
 A^{p,q}(X_0)\> A^{p,q}(X_t),$$ to play an important role in
 this paper.
\begin{definition}\label{map}\rm
For $s\in
 A^{p,q}(X_0)$, we define
$$\label{lbro} e^{\iota_{\varphi(t)}|\iota_{\overline{\varphi(t)}}}(s)=
s_{i_1\cdots i_pj_1\cdots
j_q}(z(\zeta))\left(e^{\iota_{\varphi(t)}}\left(dz^{i_1}\wedge\cdots\wedge
dz^{i_p}\right)\right)\wedge
\left(e^{\iota_{\overline{\varphi(t)}}}\left(d\overline{z}^{j_1}\wedge\cdots\wedge
d\overline{z}^{j_q}\right)\right),
$$
where $s$ is locally written as
$$s=s_{i_1\cdots i_pj_1\cdots
j_q}(z)dz^{i_1}\wedge\cdots\wedge dz^{i_p}\wedge
d\overline{z}^{j_1}\wedge\cdots\wedge d\overline{z}^{j_q}$$ and the
operators $e^{\iota_{\varphi(t)}}$,
$e^{\iota_{\overline{\varphi(t)}}}$ follow the convention
\eqref{0e-convention}. It is easy to check that this map is a real
linear isomorphism as in \cite[Lemma $2.8$]{RZ15}.
\end{definition}

The following proposition is crucial in this paper:
\begin{proposition}[{\cite[Theorem 3.4]{lry}, \cite[Proposition 2.2]{RZ15}}]\label{main1}
Let $\phi\in A^{0,1}(X,T^{1,0}_X)$ on a complex manifold $X$. Then on the space $A^{*,*}(X)$,
\beq\label{ext-old} d\circ e^{\iota_\phi}=e^{\iota_\phi}(d+\p\circ
\iota_\phi-\iota_\phi\circ
\p-\iota_{\db\phi-\frac{1}{2}[\phi,\phi]}).\eeq
\end{proposition}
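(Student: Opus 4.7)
The plan is to reduce the exponentiated identity to an infinitesimal commutator formula and then assemble it combinatorially. Expanding $e^{\iota_\phi} = \sum_{k\geq 0}\frac{1}{k!}\iota_\phi^{k}$, the proposition amounts to a family of equalities $[d,\iota_\phi^{k}]=\cdots$ which, when weighted by $1/k!$ and summed, reproduce the exponential on the right-hand side together with the correction $\iota_{\db\phi-\frac{1}{2}[\phi,\phi]}$.

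First I would compute the base commutator in local coordinates. Writing $\phi=\phi^{i}_{\bar{j}}\,\p_{i}\otimes d\bar{z}^{j}$ and applying Leibniz, one checks that $\iota_{\p_{i}}$ and $\db$ graded-commute on all of $A^{*,*}(X)$ (since $\db$ differentiates only coefficients while $\iota_{\p_i}$ contracts against $dz$-factors), so
\[
[\db,\iota_\phi]\alpha=\db\phi^{i}_{\bar{j}}\w d\bar{z}^{j}\w \iota_{\p_{i}}\alpha=\iota_{\db\phi}\alpha
\]
up to the sign dictated by the grading conventions in use, while $[\p,\iota_\phi]$ does not simplify and survives as the term $\p\iota_\phi-\iota_\phi\p$ on the right-hand side of the proposition. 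This verifies the identity to first order in $\phi$, matching the coefficient of $\iota_\phi$ in the expansion of both sides.

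Second, I would iterate to $\iota_\phi^{k}$. The Schouten--Nijenhuis bracket $[\phi,\phi]\in A^{0,2}(X,T^{1,0}_X)$ must arise from the fact that $\iota_\phi$ does not graded-commute with itself through $\p$: the expression $\p\iota_\phi^{2}-2\iota_\phi\p\iota_\phi+\iota_\phi^{2}\p$ should reduce, via a direct coordinate computation, to a contraction by $-[\phi,\phi]$, and it is the factor $2$ in the middle term that accounts for the $\frac{1}{2}$ in the statement. Moving $\db$ through $\iota_\phi^{k}$ only yields $k$ copies of $\iota_{\db\phi}$ sandwiched appropriately, which assemble cleanly because $\iota_{\db\phi}$ commutes with $\iota_\phi$ on the $T^{1,0}$ factor up to known sign.

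Finally, summing over $k$ with the $1/k!$ weights, the commutators telescope so that one factor of $\iota_\phi$ is absorbed into $e^{\iota_\phi}$ acting on the right, while the residue collapses to exactly $\p\iota_\phi-\iota_\phi\p-\iota_{\db\phi-\frac{1}{2}[\phi,\phi]}$. The main obstacle is precisely this combinatorial bookkeeping---tracking how many ways each type of commutator arises in $[d,\iota_\phi^{k}]$ so that the prefactors line up with $1/k!$ to build the exponential. If the direct expansion proves unwieldy, a cleaner alternative is to introduce a parameter $s$ and consider $F(s):=e^{-\iota_{s\phi}}\circ d\circ e^{\iota_{s\phi}}$; differentiating in $s$ produces a first-order ODE whose source term one identifies with $\p\iota_\phi-\iota_\phi\p-\iota_{\db\phi-\frac{1}{2}[\phi,\phi]}$, and integrating from $s=0$ to $s=1$ recovers the proposition while replacing combinatorial bookkeeping with an initial-value problem.
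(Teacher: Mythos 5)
Your overall strategy is the right one, and it is essentially the proof given in the cited sources \cite{lry,RZ15} (this paper does not reprove the statement; it only records the key second-order ingredient as Lemma \ref{aaaa}). You correctly isolate the two commutators that carry all the content: the first-order identity $[\db,\iota_\phi]=\pm\iota_{\db\phi}$, and the second-order Tian--Todorov identity, which with $\psi=\phi$ is exactly Lemma \ref{aaaa}. Your closing observation is also the cleanest way to finish: since $\iota_{\db\phi}$ and $\iota_{[\phi,\phi]}$ are zeroth-order (purely algebraic) operators commuting with $\iota_\phi$, the adjoint series $e^{-\iota_\phi}\circ d\circ e^{\iota_\phi}=\sum_k\frac{1}{k!}\mathrm{ad}^k_{\iota_\phi}(d)$ terminates at $k=2$, so the ``combinatorial bookkeeping'' you worry about reduces to these two computations; your parameter-$s$ ODE is the same termination phenomenon in disguise.

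The one genuine gap is that the sign bookkeeping you defer is not merely conventional, and as written your normalizations do not reproduce the stated formula. You need the two correction terms to assemble into $-\iota_{\db\phi-\frac{1}{2}[\phi,\phi]}$, and the \emph{relative} sign between the $\db\phi$- and $[\phi,\phi]$-contributions is forced: for integrable $\phi$ one has $\db\phi=\frac{1}{2}[\phi,\phi]$ and the correction must cancel identically (this is precisely how \eqref{2.2.1} uses the proposition). Now, the terminating expansion contributes $[\db,\iota_\phi]$ at first order and $\frac{1}{2}\bigl[[\p,\iota_\phi],\iota_\phi\bigr]$ at second order. You assert $\p\iota_\phi^2-2\iota_\phi\p\iota_\phi+\iota_\phi^2\p=-\iota_{[\phi,\phi]}$, i.e.\ $\bigl[[\p,\iota_\phi],\iota_\phi\bigr]=-\iota_{[\phi,\phi]}$, while leaving $[\db,\iota_\phi]$ undetermined up to sign. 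Matching the first-order term of the proposition forces $[\db,\iota_\phi]=-\iota_{\db\phi}$, and then your second-order sign yields $-\iota_{\db\phi}-\frac{1}{2}\iota_{[\phi,\phi]}=-\iota_{\db\phi+\frac{1}{2}[\phi,\phi]}$, which does \emph{not} vanish in the integrable case. So either the contraction convention for $T^{1,0}_X$-valued $(0,2)$-forms or the sign in your second-order identity must be adjusted, and the two cannot be chosen independently. A complete proof has to fix one convention for $\iota_\psi$ with $\psi\in A^{0,2}(X,T^{1,0}_X)$, verify both commutators in local coordinates under that single convention, and then check the integrable-case cancellation as a consistency test; ``up to sign'' is exactly the step where this argument is usually gotten wrong.
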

From the proof of Proposition \ref{main1}, we see that \eqref{ext-old} is a natural generalization of Tian-Todorov Lemma\cite{T, To89}, whose variants appeared in
\cite{F,BK,Li,LSY,C} and also \cite{LR,lry} for vector bundle valued
forms.
\begin{lemma}\label{aaaa} For $\phi, \psi\in
A^{0,1}(X,T^{1,0}_X)$ and $\alpha\in A^{*,*}(X)$ on an $n$-dimensional complex
manifold $X$,
$$\label{f1}
[\phi,\psi]\lrcorner\alpha=-\p(\psi\lrcorner(\phi
\lrcorner\alpha))-\psi\lrcorner(\phi \lrcorner\p\alpha)
+\phi\lrcorner\p(\psi\lrcorner\alpha)+\psi
\lrcorner\p(\phi\lrcorner\alpha),
$$
where
$$[\phi,\psi]:=\sum_{i,j=1}^n(\phi^i\wedge\partial_i\psi^j+\psi^i\wedge\partial_i\phi^j)\otimes\partial_j$$ for
$\varphi=\sum_{i}\varphi^i\otimes\partial_i$
and $\psi=\sum_{i}\psi^i\otimes\partial_i$.
\end{lemma}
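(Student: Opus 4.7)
The identity is local, so I would work in holomorphic coordinates $(z^1,\ldots,z^n)$ on $X$, writing $\phi = \phi^i \otimes \p_i$ and $\psi = \psi^j \otimes \p_j$ with $\phi^i,\psi^j\in A^{0,1}(X)$. Two elementary ingredients drive the computation: the contraction by a $T^{1,0}$-valued $(0,1)$-form reads $\phi \lrcorner \beta = \phi^i \wedge (\p_i \lrcorner \beta)$, with $\p_i\lrcorner$ the ordinary interior product; and $\p_j \lrcorner \phi^i = 0$ since $\phi^i$ has no $dz$-component. Together they give the intermediate identity
$$
\psi \lrcorner (\phi \lrcorner \beta) = -\psi^j \wedge \phi^i \wedge (\p_j \lrcorner \p_i \lrcorner \beta),
$$
which will serve as the starting template for the four expansions on the right-hand side.

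I would then apply graded Leibniz for $\p$ term by term, treating $\phi^i,\psi^j$ as odd-degree forms. The expansion of $-\p(\psi \lrcorner (\phi \lrcorner \alpha))$ yields two kinds of contributions: those where $\p$ has landed on one of $\phi^i,\psi^j$, producing factors like $\p_k \phi^i$ or $\p_k\psi^j$ that are the building blocks of the bracket; and those where $\p$ has landed on $\p_j \lrcorner \p_i \lrcorner \alpha$. Similarly, $-\psi \lrcorner (\phi \lrcorner \p \alpha)$, $\phi \lrcorner \p(\psi \lrcorner \alpha)$, $\psi \lrcorner \p(\phi \lrcorner \alpha)$ can be rewritten in terms of $\p\alpha$ together with contractions of $\alpha$ itself, each of whose expansion produces both ``$\p\alpha$-type'' terms and ``$\p\phi^i$/$\p\psi^j$-type'' terms. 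The strategy is to verify that (i) all ``$\p\alpha$-type'' terms cancel in the total sum; and (ii) the surviving ``$\p\phi^i$/$\p\psi^j$-type'' terms assemble into
$$
(\phi^i \wedge \p_i\psi^j + \psi^i \wedge \p_i\phi^j) \wedge (\p_j \lrcorner \alpha) = [\phi,\psi] \lrcorner \alpha.
$$

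The main obstacle is purely bookkeeping: every time an interior product $\p_k\lrcorner$ is pushed past an odd factor $\phi^i$ or $\psi^j$, or graded Leibniz for $\p$ is applied to a wedge product involving such odd factors, a sign change is incurred, and these signs must be tracked in a consistent convention across the four expansions for the cancellations in (i) to work out. A convenient systematic simplification is to reduce to single monomials $\alpha = f\, dz^I \wedge d\b z^J$ by $\mathbb{R}$-linearity and verify the identity on these; the short warm-up case $\alpha = f\,dz^m$ already exhibits the key cancellation pattern, as the only surviving terms are $f\phi^i \wedge \p_i\psi^m$ and $f\psi^i \wedge \p_i\phi^m$, exactly matching $[\phi,\psi]\lrcorner\alpha$. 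Since this lemma is a standard variant of the Tian--Todorov identity, the bookkeeping can be patterned after the analogous arguments in \cite{T, To89, F, BK, Li, LSY, C, LR, lry}.
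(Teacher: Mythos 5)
The paper gives no proof of this lemma: it is quoted as the Tian--Todorov lemma with references to \cite{T,To89} and the variants in \cite{F,BK,Li,LSY,C,LR,lry}, and your local-coordinate computation --- reducing to monomials, expanding the four terms via graded Leibniz from the template $\psi\lrcorner(\phi\lrcorner\beta)=-\psi^j\wedge\phi^i\wedge(\p_j\lrcorner\p_i\lrcorner\beta)$, and checking that the $\p\alpha$-type contributions cancel while the coefficient-derivative terms assemble into $[\phi,\psi]\lrcorner\alpha$ --- is exactly the standard argument those references carry out. Your warm-up case $\alpha=f\,dz^m$ checks out (the $\p_i f$ terms cancel pairwise by anticommutativity of the $(0,1)$-forms, leaving $f\phi^i\wedge\p_i\psi^m+f\psi^i\wedge\p_i\phi^m$), so the strategy is correct and essentially the same as the proof the paper points to.
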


\subsection{The $p$-K\"{a}hler structures}
Let $V$ be a complex $n$-dimensional vector space with its
dual space $V^{*}$, i.e., the space of complex linear functionals
over $V$. Denote the complexified space of the exterior $m$-vectors
of $V^{*}$ by $\bigwedge^{m}_{\mathbb{C}} V^{*}$, which admits a
natural direct sum decomposition
\[ \bigwedge^{m}_{\mathbb{C}} V^{*} = \sum_{r+s=m} \bigwedge^{r,s} V^*, \]
where $\bigwedge^{r,s} V^*$ denotes the complex vector space
of $(r,s)$-forms on $V^*$.
The case $m=1$ exactly reads \[ \bigwedge^{1}_{\mathbb{C}} V^{*} =
V^* \bigoplus \overline{V^{*}},\] where the natural isomorphism $V^*
\cong \bigwedge^{1,0}V^*$ is used. Let $q\in \{1, \cdots, n\}$ and $p=n-q$. Clearly, the complex dimension
$N$ of $\bigwedge^{q,0}V^*$ equals to the combination number $C^{q}_n$. After a basis $\{
\beta_i \}_{i=1}^N$ of the complex vector space $\bigwedge^{q,0}V^*$ is fixed,
the canonical Pl\"ucker embedding as in \cite[Page 209]{GH} is given
by
$$\begin{array}{cccc}
\rho: & G(q,n) & \hookrightarrow & \mathbb{P}(\bigwedge^{q,0}V^*) \\
      & \Lambda & \mapsto & [\cdots,\Lambda_{i},\cdots]. \\
\end{array} $$
Here $G(q,n)$ denotes the Grassmannian of $q$-planes in the vector
space $V^*$ and $\mathbb{P}(\bigwedge^{q,0}V^*)$ is the
projectivization of $\bigwedge^{q,0}V^*$. A $q$-plane in $V^*$ can
be represented by a decomposable $(q,0)$-form $\Lambda \in
\bigwedge^{q,0}V^*$ up to a nonzero complex number, and
$\{\Lambda_i\}_{i=1}^N$ are exactly the coordinates of $\Lambda$
under the fixed basis $\{ \beta_i \}_{i=1}^N$. \emph{Decomposable
$(q,0)$-forms} are those forms in $ \bigwedge^{q,0}V^*$ that can be
expressed as $\gamma_1 \bigwedge \cdots \bigwedge \gamma_q$ with
$\gamma_i \in V^* \cong \bigwedge^{1,0}V^*$ for $1 \leq i \leq q$.
Set
$$\label{knpq}
k=(N-1)-pq
$$ to be the codimension of $\rho(G(q,n))$ in
$\mathbb{P}(\bigwedge^{q,0}V^*)$, whose locus characterizes the
decomposable $(q,0)$-forms in $\mathbb{P}(\bigwedge^{q,0}V^*)$.

Now we list several positivity notations and refer the readers to \cite{HK,H,Demailly} for more details. A $(q,q)$-form $\Theta$ in
$\bigwedge^{q,q}V^*$ is defined to be \emph{strictly positive (resp., positive)} if
\[ \Theta =\sigma_{q}\sum_{i,j=1}^N \Theta_{i\b j} \beta_i \wedge \b\beta_j,\]
where $\Theta_{ij}$ is a positive (resp., semi-positive) hermitian matrix of the size $N \times N$ with $N=C_{n}^q$ under the
basis $\{\beta_i \}_{i=1}^N$ of the complex vector space $\bigwedge^{q,0}V^*$ and $\sigma_{q}$ is defined to be the constant
$2^{-q}(\sqrt{-1})^{q^2}$.
According to this definition, the fundamental form of a hermitian metric on a complex manifold is actually a strictly positive $(1,1)$-form everywhere. A $(p,p)$-form $\Gamma\in \bigwedge^{p,p}V^*$ is called \emph{weakly positive} if
the volume form $$\Gamma\wedge\sigma_{q}\tau\wedge\bar{\tau}$$ is
positive for every nonzero decomposable $(q,0)$-form $\tau$ of $V^*$, while
a $(q,q)$-form $\Upsilon\in \bigwedge^{q,q}V^*$ is said to be \emph{strongly positive} if
$\Upsilon$ is a convex combination
$$\Upsilon=\sum_s\gamma_s \sqrt{-1}\alpha_{s,1}\wedge\bar\alpha_{s,1}\wedge\cdots\wedge\sqrt{-1}\alpha_{s,q}\wedge\bar\alpha_{s,q},$$
where $\alpha_{s,i}\in V^*$ and $\gamma_s\geq 0$.
As shown in \cite[Chapter III.\S\ 1.A]{Demailly}, the sets of weakly positive and strongly positive forms are closed convex cones,
and by definition, the weakly positive cone is dual to the strongly positive cone via the pairing
$$\bigwedge^{p,p}V^*\times \bigwedge^{q,q}V^*\longrightarrow \mathbb{C}.$$
Then all weakly positive forms are real.
An element $\Xi$ in $\bigwedge^{p,p}V^*$ is called \emph{transverse}, if
the volume form $$\Xi\wedge\sigma_{q}\tau\wedge\bar{\tau}$$ is
strictly positive for every nonzero decomposable $(q,0)$-form $\tau$
of $V^*$. There exist many various names for this
terminology and we refer to \cite[Appendix]{abb} for a list.

These positivity notations on complex vector spaces can be extended pointwise to
complex differential forms on a complex manifold.
Let $M$ be an $n$-dimensional  complex manifold. Then:

\begin{definition}[{\cite[Definition $1.11$]{aa}}, for example]\label{pkf}\rm
Let $p$ be an integer,
$1\leq p\leq n$. Then $M$ is called a \emph{$p$-K\"ahler manifold}
if there exists a \emph{$p$-K\"ahler form}, that is a $d$-closed transverse $(p,p)$-form on $M$.
\end{definition}

The readers are referred to \cite{S} for more related concepts (such as differential form transversal to the cone structure on a real differentiable manifold) to $p$-K\"ahler structures.

\section{Relevance to mild $\p\db$-lemma and modification}
We will introduce the so-called the $(p,q)$-th mild $\p\bar{\p}$-lemma and its relevance, and also present its modification stability on compact complex manifolds.
\subsection{The $(p,q)$-th mild $\p\db$-lemma and its relevance}\label{vddbar}
This subsection is to study various $\p\db$-lemmata related to local stabilities of complex structures, their properties, difference and roles there in some special case.
More details can be found in \cite[Subsection 3.1]{RwZ} and the references therein.

Now
we introduce a new notion.
\begin{definition}\label{mildddbar}\rm
  We say a complex manifold $X$ satisfies the \emph{$(p,q)$-th mild $\p\b{\p}$-lemma} if for any complex differential $(p-1,q)$-form $\xi$ with $\p\b{\p}\xi=0$ on $X$, there exists a $(p-1,q-1)$-form $\theta$ such that $\p\b{\p}\theta=\p\xi$.
\end{definition}

So we can state our main theorem:
\begin{theorem}\label{main}
For any positive integer $p\leq n-1$, any
small differentiable deformation $X_t$ of a
$p$-K\"ahler manifold $X_0$ satisfying  the
$(p,p+1)$-th mild $\p\db$-lemma is still
$p$-K\"ahlerian.
\end{theorem}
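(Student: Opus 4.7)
The plan is to invoke Kuranishi's completeness theorem to reduce the assertion to the Kuranishi family $\varpi: \mathcal{K} \to T$, after which it suffices to construct, for $t$ near $0 \in T$, a $d$-closed transverse real $(p,p)$-form $\tilde\omega_t$ on $X_t = \varpi^{-1}(t)$ depending smoothly on $t$ with $\tilde\omega_0 = \omega_0$. Following the strategy outlined in the paper, I would take
$$\tilde\omega_t := e^{\iota_\varphi | \iota_{\bar\varphi}}(\omega(t)),$$
where $\varphi = \varphi(t)$ is the integrable Beltrami differential associated to the family and $\omega(t) \in A^{p,p}(X_0)$ is a smooth real family with $\omega(0) = \omega_0$ to be constructed. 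Because the extension map is real-linear and smooth in $t$, the transversality of $\tilde\omega_t$ for small $t$ is automatic from Proposition \ref{ext-trans}, so the only remaining task is to choose $\omega(t)$ so as to enforce $d\tilde\omega_t = 0$.

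To convert $d\tilde\omega_t = 0$ into an equation on $X_0$, I would apply Proposition \ref{main1} (the generalised Tian--Todorov identity) separately to the holomorphic and antiholomorphic pieces of the extension map. Integrability of $\varphi$, i.e., $\bar\partial\varphi = \frac{1}{2}[\varphi,\varphi]$ together with its conjugate, kills the Lie-bracket correction terms and produces an operator on $A^{p,p}(X_0)$ whose vanishing on $\omega(t)$ is equivalent to $d\tilde\omega_t = 0$ on $X_t$. Expanding $\omega(t) = \omega_0 + \sum_{k\geq 1} \omega_k$ as a power series homogeneous of order $k$ in $(t,\bar t)$ and decomposing by bidegree on $X_0$, one obtains an a priori coupled system for the $\omega_k$'s. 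The crucial simplification, alluded to in the introduction, is that by simultaneously matching the $(p+1,p)$- and $(p,p+1)$-components against the induction order and using reality of $\omega(t)$, the two sectors become complex conjugates of one another, so the full system collapses at each step to a single $(p,p+1)$-equation.

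The main obstacle is the inductive solvability together with convergence of the series. At order $k$ the reduced equation has the shape
$$\bar\partial \omega_k = \Psi_k,$$
with $\Psi_k \in A^{p,p+1}(X_0)$ produced explicitly from $\omega_0,\ldots,\omega_{k-1}$ and $\varphi$ through iterated contractions governed by Lemma \ref{aaaa}. Using integrability of $\varphi$ and the induction hypothesis that $d\tilde\omega_t$ vanishes modulo order $k$, I would verify that $\Psi_k$ is both $\bar\partial$-closed and $\partial$-exact, and exhibit a primitive $\eta_k \in A^{p-1,p+1}(X_0)$ with $\partial\eta_k = \Psi_k$ and $\partial\bar\partial\eta_k = -\bar\partial\Psi_k = 0$. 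The $(p,p+1)$-th mild $\partial\bar\partial$-lemma then furnishes $\theta_k \in A^{p-1,p}(X_0)$ with $\partial\bar\partial\theta_k = \partial\eta_k = \Psi_k$, and $\omega_k := -\partial\theta_k \in A^{p,p}(X_0)$ is a particular solution of $\bar\partial\omega_k = \Psi_k$; averaging with its complex conjugate yields a real solution of both the $(p,p+1)$- and $(p+1,p)$-equations simultaneously. To upgrade this formal construction into a smooth family on a genuine neighborhood of $0$, I would make the canonical choice of $\theta_k$ via the Green operator of a fixed Hermitian metric on $X_0$ and invoke elliptic a priori estimates in the Kodaira--Spencer--Kuranishi tradition to prove convergence of $\sum_k \omega_k$ in a suitable H\"older topology, producing the desired $p$-K\"ahler form $\tilde\omega_t$.
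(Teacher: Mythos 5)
Your proposal follows essentially the same route as the paper: reduction to the Kuranishi family, extension by $e^{\iota_{\varphi}|\iota_{\overline{\varphi}}}$ with transversality free from Proposition \ref{ext-trans}, collapse of the obstruction system to the $(p+1,p)$- and $(p,p+1)$-components by comparing form types against the induction order (Propositions \ref{d-cri} and \ref{eff}), resolution of each order by the $(p,p+1)$-th mild $\p\db$-lemma with canonical Green's-operator potentials, and elliptic estimates for convergence and regularity. The one slip is that the real solution must be the \emph{sum} $-\p\theta_k-\db\overline{\theta_k}$ rather than the average of $-\p\theta_k$ with its conjugate (averaging halves the right-hand side, since $-\p\theta_k$ alone solves only the $\db$-equation and is $\p$-closed); this sum is exactly the combined potential $\db\mu+\p\nu$ of the paper's Lemma \ref{slvdb-1}.
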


According to Lemma \ref{pkb}, Theorem \ref{main} unifies the local stabilities of K\"{a}hler structures \cite[Theorem 15]{KS} and balanced structures under $(n-1,n)$-th mild $\p\b{\p}$-lemma \cite[Theorem 1.5]{RwZ}, which is an obvious generalization of Wu's result \cite[Theorem 5.13]{w}
that the balanced structure is stable under small deformation when the $\p\db$-lemma holds, to $p$-K\"ahler mild $\p\db$-structures for $1\leq p \leq n-1$.

Let $X$ be a compact complex manifold of complex dimension $n$
with the following commutative diagram
\begin{equation}\label{diag}
\xymatrix{      & H^{p,q}_{\p}(X) \ar[dr]^{\iota^{p,q}_{\p,A}} &            \\
 H^{p,q}_{BC}(X) \ar[ur]^{\iota^{p,q}_{BC,\p}} \ar[dr]_{\iota^{p,q}_{BC,\db}} \ar[r]^{\iota^{p,q}_{BC,dR}} &  H^{p+q}_{dR}(X)\ar[r]^{\iota^{p,q}_{dR,A}} &  H^{p,q}_{A}(X)  \\
                & H^{p,q}_{\db}(X) \ar[ur]_{\iota^{p,q}_{\db,A}} &         . }
\end{equation}
Recall that Dolbeault cohomology groups $H^{\bullet,\bullet}_{\db}(X)$ of $X$ are defined by:
$$H^{\bullet,\bullet}_{\db}(X):=\frac{\ker\db}{\im\ \db},$$
with $H^{\bullet,\bullet}_{\p}(X)$ similarly defined, while Bott-Chern and Aeppli cohomology groups are defined as
$$H^{\bullet,\bullet}_{BC}(X):=\frac{\ker \p\cap \ker\db}{\im\ \p\db}\quad
\text{and}\quad H^{\bullet,\bullet}_{A}(X):=\frac{\ker \p\db}{\im\ \p+\im\ \db},$$
respectively. The dimensions of $H^{p+q}_{dR}(X)$, $H^{p,q}_{\db}(X)$, $H^{p,q}_{BC}(X)$, $H^{p,q}_{A}(X)$ and $H^{p,q}_{\p}(X)$ over $\mathbb{C}$
are denoted by $b_{p+q}(X)$, $h^{p,q}_{\db}(X)$, $h^{p,q}_{BC}(X)$, $h^{p,q}_{A}(X)$ and $h^{p,q}_{\p}(X)$, respectively, and the first four of them
are usually called $(p+q)$-th \emph{Betti numbers}, $(p,q)$-\emph{Hodge numbers}, \emph{Bott-Chern numbers} and \emph{Aeppli numbers}, respectively.
So the \emph{(standard) $\p\db$-lemma} is equivalent to the injectivities of the mappings
$$\iota^{p,q}_{BC,dR}: H^{p,q}_{BC}(X)\rightarrow H^{p+q}_{dR}(X)$$ for all $p,q$, or to the isomorphisms of all the maps in Diagram \eqref{diag} by \cite[Remark 5.16]{DGMS}.

Notice that the $(1,2)$-th mild $\p\db$-lemma is different from the $\p\db$-lemma on a complex manifold.
It is easy to see that the $(1,2)$-th mild $\p\db$-lemma amounts to the injectivity of the mapping
\[ \iota_{BC,\p}^{1,2}:H^{1,2}_{BC}(X) \rightarrow H^{1,2}_{\p}(X).\]
Then by \cite[Tables $5$ and $6$ in Appendix A]{AK} and \cite[the case $B$ in Example 1]{K}, one has:
\begin{example}[{\cite[Example 1.7]{RwZ}}]\label{exnak}
\emph{Let $X$ be the manifold in the case $(ii)$ of the completely-solvable
Nakamura manifold as given in \cite[Example 3.1]{AK}.
Then the manifold $X$ satisfies the $(1,2)$-th mild $\p\db$-lemma, but not the $\p\db$-lemma.}
\end{example}

There are another three similar conditions relating with the local
stabilities of complex structures. The \emph{$(p,p+1)$-th weak
$\p\db$-lemma} on a compact complex manifold $X$, first introduced by
Fu-Yau \cite{FY} in $(p,p+1)=(n-1,n)$, says that if for any real $(p,p)$-form $\psi$
such that $\db \psi$ is $\p$-exact, there is a $(p-1,p)$-form
$\theta$, satisfying
\[ \p \db \theta = \db \psi. \]
And the \emph{$(p,q)$-th strong $\p\db$-lemma} on $X$, first proposed by
Angella-Ugarte \cite{au} in the case $(p,q)=(n-1,n)$, states that the induced mapping
$\iota^{p,q}_{BC,A}: H^{p,q}_{BC}(X) \rightarrow
H^{p,q}_{A}(X)$ by the identity map is injective, which
is equivalent to that for any $d$-closed $(p,q)$-form $\Gamma$ of
the type $\Gamma=\p\xi+\db \psi$, there exists a $(p-1,q-1)$-form
$\theta$ such that
\[ \p \db \theta = \Gamma. \]
Angella-Ugarte \cite[Proposition 4.8]{au} showed the deformation
openness of the $(n-1,n)$-th strong $\p\db$-lemma.
Besides, the condition that the induced mapping
$\iota^{p,q}_{BC,\db}: H^{p,q}_{BC}(X) \rightarrow
H^{p,q}_{\db}(X)$ by the identity map is injective, is first presented
by Angella-Ugarte \cite{AU} in the case $(p,q)=(n-1,n)$ to study local conformal balanced
structures and global ones, which we may call the \emph{$(p,q)$-th
dual mild $\p\db$-lemma}.

After a simple check, we have the following observation:
\begin{observation}\label{s-eq}
\emph{The compact complex manifold $X$ satisfies the $(p,q)$-th
strong $\p\db$-lemma if and only if both of the mild and
dual mild ones hold on $X$.}
\end{observation}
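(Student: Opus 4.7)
The plan is to unpack each of the three conditions into its concrete form on forms, and then observe that $d$-closedness of a sum $\p\xi+\db\psi$ automatically splits the sum into two pieces, one handled by the mild lemma and the other by its dual.

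Unwinding the cohomological definitions gives the following working reformulations: the $(p,q)$-th mild $\p\db$-lemma says that every $\db$-closed, $\p$-exact $(p,q)$-form is $\p\db$-exact (which is equivalent to the injectivity of $\iota^{p,q}_{BC,\p}$); the dual mild lemma says that every $\p$-closed, $\db$-exact $(p,q)$-form is $\p\db$-exact (equivalent to the injectivity of $\iota^{p,q}_{BC,\db}$); and the strong lemma is exactly the statement already recorded in the excerpt, namely that every $d$-closed $\Gamma\in A^{p,q}(X)$ of the form $\Gamma=\p\xi+\db\psi$ is $\p\db$-exact (equivalent to the injectivity of $\iota^{p,q}_{BC,A}$). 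Once these reformulations are in place, the forward direction is immediate: if $\alpha=\p\xi$ is $\db$-closed, then $\alpha=\p\xi+\db\cdot 0$ is $d$-closed and of the required form, so the strong lemma gives $\alpha\in\im\,\p\db$; symmetrically for dual mild.

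For the reverse direction, take a $d$-closed $\Gamma=\p\xi+\db\psi\in A^{p,q}(X)$. Because $\Gamma$ is $d$-closed and of pure type, one has $\p\Gamma=0$ and $\db\Gamma=0$ separately; expanding these identities and using $\p^2=\db^{\,2}=0$ yields
\begin{equation*}
\db(\p\xi)=-\db(\db\psi)=0, \qquad \p(\db\psi)=-\p(\p\xi)=0.
\end{equation*}
Thus $\p\xi$ is $\p$-exact and $\db$-closed, so the mild $\p\db$-lemma produces $\theta_1\in A^{p-1,q-1}(X)$ with $\p\xi=\p\db\theta_1$; similarly, $\db\psi$ is $\db$-exact and $\p$-closed, and the dual mild lemma produces $\theta_2$ with $\db\psi=\p\db\theta_2$. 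Adding gives $\Gamma=\p\db(\theta_1+\theta_2)$, which is precisely the strong $\p\db$-lemma.

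There is really no serious obstacle: the only subtle point is the type-based splitting that turns $d(\p\xi+\db\psi)=0$ into $\db(\p\xi)=0$ and $\p(\db\psi)=0$ individually, after which each summand is fed into its respective lemma. I would present the proof in exactly this order, stating the three equivalent form-level reformulations first and then proving the two implications in two short paragraphs.
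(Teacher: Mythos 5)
Your proof is correct, and since the paper presents this observation without proof (as "a simple check"), what you have written is precisely the intended check: the forward direction by specializing $\Gamma=\p\xi+\db\cdot 0$ (resp.\ $\db\psi$), and the reverse direction by using $d$-closedness of the pure-type form $\Gamma=\p\xi+\db\psi$ to deduce $\db(\p\xi)=0$ and $\p(\db\psi)=0$ separately, feeding each summand into the mild and dual mild lemmata respectively. No gaps.
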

All these four \lq\lq $\p\db$-lemmata" hold if the compact complex manifold $X$
satisfies the standard $\p\db$-lemma. And either the $(p,p+1)$-th mild or dual mild $\p\db$-lemma
implies the weak one, while \cite[Corollary 3.9]{RwZ} implies that
the $(n-1,n)$-th
mild $\p\db$-lemma and the dual mild one are unrelated.

By \cite{ab}, a small deformation of the
Iwasawa manifold, which satisfies the $(2,3)$-th weak $\p\db$-lemma
but does not satisfy the mild one from Example \ref{ex-dms-notms},
may not be balanced. Thus, the condition \lq\lq$(n-1,n)$-th mild
$\p\db$-lemma" in Corollary \ref{cor-mt}.\ref{balanced-Kahler} can't be replaced by the
weak one.
\begin{example}[{\cite[Example 3.7]{RwZ}}]\label{ex-dms-notms}
\emph{The complex structure in the category $(i)$ of
\cite[Proposition 2.3]{UV}, i.e., the complex parallelizable case of
complex dimension $3$, satisfies the $(2,3)$-th weak $\p\db$-lemma
and the dual mild one, but does not satisfy the mild one. The
Iwasawa manifold belongs to the category $(i)$.}
\end{example}

The next example shows that neither the $(n-1,n)$-th weak
$\p\db$-lemma nor the mild one is deformation open. And it shows
that the condition in \cite[Theorem 6]{FY} is not a necessary one
for the deformation openness of balanced structures as mentioned in
\cite[the discussion ahead of Example 3.7]{UV}. Recall that \cite[Theorem 6]{FY} says that the balanced
structure is deformation open, if the $(n-1,n)$-th weak
$\p\db$-lemma holds on the general fibers $X_t$ for $t \neq 0$. Fortunately,
Corollary \ref{cor-mt}.\ref{balanced-Kahler} can be applied to this example. See also
\cite[Remark 4.7]{au}, where Corollary \ref{cor-mt}.\ref{balanced-Kahler} can also be
applied.

\begin{example}[{\cite[Example 3.7]{UV}}]\label{not-fy}
\emph{Ugarte-Villacampa constructed an explicit family of
nilmanifolds with left-invariant complex structures $I_{\lambda}$ for
$\lambda\in[0,1)$ (of complex dimension $3$), with the fixed
underlying manifold the Iwasawa manifold. The complex structure of
the reference fiber $I_{0}$ is abelian and admits a left-invariant
balanced metric, satisfying the $(2,3)$-th mild $\p\db$-lemma by
\cite[Proposition 3.8]{RwZ}. The complex structures of $I_{\lambda}$
for $\lambda \neq 0$ are nilpotent from \cite[Cororllary 2]{CFP},
but neither complex-parallelizable nor abelian. Thus, they do not
satisfy the $(2,3)$-th weak $\p\db$-lemma by \cite[Proposition
3.6]{UV}. However, the nilmanifolds $I_{\lambda}$ for $\lambda \neq
0$ admit balanced metrics.}
\end{example}

Meanwhile, a $2n$-dimensional nilmanifold endowed with a left-invariant
abelian complex structure satisfies the $(n-1,n)$-th mild
$\p\db$-lemma but never satisfies the $(n-1,n)$-th dual mild
$\p\db$-lemma. It shows that the deformation openness of balanced
structures with the reference fiber a nilmanifold
endowed with a left-invariant abelian balanced Hermitian structure is easily obtained by Corollary \ref{cor-mt}.\ref{balanced-Kahler}, but not
from \cite[Theorem 4.9]{au}, which says that if $X_0$ admits a locally conformal balanced
metric and satisfies the $(n-1,n)$-th strong $\p\db$-lemma, then $X_t$ is balanced for small $t$.

Moreover, the deformation invariance of the $(n-1,n-1)$-th Bott-Chern numbers
$h^{n-1,n-1}_{BC}(X_t)$ can assure the deformation openness of
balanced structures as shown in \cite[Proposition 4.1]{au}.
Inspired by Wu's result \cite[Theorem 5.13]{w}, one has the generalization:
\begin{theorem}[{\cite[Theorem $1.9$]{RwZ}}]\label{d-ext}
For any positive integer $p\leq n-1$, any
small differentiable deformation $X_t$ of a
$p$-K\"ahler manifold $X_0$ satisfying the deformation invariance of $(p,p)$-Bott-Chern numbers
is still $p$-K\"ahlerian.
\end{theorem}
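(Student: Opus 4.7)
The plan is to adapt Wu's argument \cite[Theorem 5.13]{w}, replacing the full $\p\db$-lemma hypothesis by constancy of $h^{p,p}_{BC}$ along the family. Concretely, I will produce a smooth transverse real $(p,p)$-form on $X_t$ extending $\omega_0$, and then subtract off its non-$d$-closed component inside a smoothly varying Bott-Chern Hodge decomposition; the resulting form will be $d$-closed and, for $t$ small, still transverse.

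First, apply the extension map of Definition \ref{map} to obtain
\[ \tilde\omega_t := e^{\iota_\varphi|\iota_{\b\varphi}}(\omega_0) \in A^{p,p}(X_t), \]
which is smooth in $t$, real (since $\omega_0$ is), and satisfies $\tilde\omega_0 = \omega_0$. Proposition \ref{ext-trans} guarantees $\tilde\omega_t$ remains transverse on $X_t$ for all small $t$, so the only thing left to arrange is $d$-closedness.

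Next, the hypothesis that $h^{p,p}_{BC}(X_t)$ is locally constant, together with Kodaira-Spencer-type elliptic theory for the self-adjoint Bott-Chern Laplacian $\Delta_{BC,t}$, yields a smoothly varying Bott-Chern Green operator and orthogonal projections, and in particular a smooth Hodge-type orthogonal decomposition
\[ A^{p,p}(X_t) = \mathcal{H}^{p,p}_{BC}(X_t) \oplus \p_t\db_t A^{p-1,p-1}(X_t) \oplus W_t, \]
where $W_t$ denotes the orthogonal complement. By the Bott-Chern Hodge theorem one has $\ker\p_t\cap\ker\db_t = \mathcal{H}^{p,p}_{BC}(X_t) \oplus \p_t\db_t A^{p-1,p-1}(X_t)$, hence $W_t \cap \ker\p_t \cap \ker\db_t = \{0\}$. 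Writing $\tilde\omega_t = \mathcal{H}_t + \p_t\db_t\alpha_t + R_t$ with each piece smooth in $t$, set
\[ \omega_t := \tilde\omega_t - R_t = \mathcal{H}_t + \p_t\db_t\alpha_t, \]
which is $d$-closed since both summands are, and still smooth in $t$.

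Finally, since $d\tilde\omega_0 = d\omega_0 = 0$, the remainder $R_0$ lies in $W_0 \cap \ker\p_0 \cap \ker\db_0 = \{0\}$, so $R_t \to 0$ and $\omega_t \to \omega_0$ as $t \to 0$; replacing $\omega_t$ by $\tfrac{1}{2}(\omega_t + \overline{\omega_t})$ if necessary preserves this limit and enforces reality. Since the cone of transverse real $(p,p)$-forms is pointwise open in $A^{p,p}(X_t)$, $\omega_t$ remains transverse for $t$ sufficiently small, yielding the desired $p$-K\"ahler form on $X_t$. The main obstacle I expect is verifying smooth dependence on $t$ of the Bott-Chern Green operator under the constancy of $h^{p,p}_{BC}(X_t)$; this is the standard Kodaira-Spencer-style upper-semicontinuity-plus-constancy argument applied to the elliptic fourth-order operator $\Delta_{BC,t}$, and every subsequent smoothness claim in the construction reduces to it.
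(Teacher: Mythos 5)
Your proposal is correct and follows essentially the approach this paper associates with the statement: the paper does not reprove Theorem \ref{d-ext} but quotes it from \cite[Theorem 1.9]{RwZ}, and its proof is exactly the Kodaira--Spencer-type argument sketched at the start of Section \ref{psp} --- constancy of $h^{p,p}_{BC}(X_t)$ gives smooth dependence of the Bott--Chern Green's operator and projections via \cite[Theorem 7]{KS}, one projects a smooth real extension of $\omega_0$ onto $\ker\p_t\cap\ker\db_t=\mathcal{H}^{p,p}_{BC}(X_t)\oplus\mathrm{Im}\,\p_t\db_t$, and Proposition \ref{ext-trans} preserves transversality for small $t$. Your refinement of retaining the $\p_t\db_t$-exact component (rather than only the harmonic part) is exactly what is needed so that the projection fixes $\omega_0$ at $t=0$ without invoking K\"ahler identities.
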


Nevertheless, Corollary \ref{cor-mt}.\ref{balanced-Kahler} may be applied to some cases with deformation variance
of the $(n-1,n-1)$-th Bott-Chern numbers. The coming newly constructed example can be one of them among nilmanifolds,
while the manifold in \cite[Example 4.10]{au}, satisfying the $(2,3)$-th strong $\p\db$-lemma, is a solvable manifold
but not a nilmanifold by \cite[Corollary 3.9]{RwZ}.
\begin{example}\label{bcvary}
\emph{Let $G$ be the simply connected nilpotent Lie group
determined by a ten-dimensional $3$-step nilpotent Lie algebra $\mathfrak{g}$
endowed with a left-invariant abelian complex structure $J$, satisfying the following structure equation:
\[\begin{cases}
d\gamma^1=d\gamma^2=d\gamma^3=0,\\
d\gamma^4=\gamma^{1\bar{3}},\\
d\gamma^5=\gamma^{3\bar{4}},
\end{cases} \]
where the natural decomposition with respect to $J$ yields
\[ \mathfrak{g}_{\mathbb{C}} = \mathfrak{g} \otimes_{\mathbb{R}} \mathbb{C} = \mathfrak{g}^{1,0}_J \oplus \mathfrak{g}^{0,1}_J;
\mathfrak{g}^*_{\mathbb{C}} = \mathfrak{g}^* \otimes_{\mathbb{R}} \mathbb{C} = \mathfrak{g}^{*(1,0)}_J \oplus \mathfrak{g}^{*(0,1)}_J, \]
$\{ \gamma^i \}_{i=1}^5$ is the basis of $\mathfrak{g}^{*(1,0)}$
and the convention $\gamma^{1\bar{3}}=\gamma^{1} \wedge \overline{\gamma^{3}}$ is
used here and afterwards. Define a lattice $\Gamma$ in $G$, determined by the rational
span of $\{ \gamma^i,\bar{\gamma}^i \}_{i=1}^5$. Then $M := \Gamma \backslash G$ is a compact nilmanifold with the abelian complex structure $J$ given above. It is easy to check that
\[\Omega=\gamma^{1234\overline{1234}}+\gamma^{1235\overline{1235}}
+\gamma^{1245\overline{1245}}+\gamma^{1345\overline{1345}}+\gamma^{2345\overline{2345}}\]
is a left-invariant balanced metric on $(\mathfrak{g},J)$, descending to $M$.
Denote the basis of $\mathfrak{g}^{1,0}$ dual to $\{ \gamma^i \}_{i=1}^5$ by $\{\theta_i\}_{i=1}^5$.
The equation $d\omega(\theta,\theta') = - \omega([\theta,\theta'])$ for $\omega \in \mathfrak{g}^{*}_{\mathbb{C}}$
and $\theta,\theta' \in \mathfrak{g}_{\mathbb{C}}$, establishes the equalities
\[ [\bar{\theta}_3,\theta_1]=\theta_4,\ [\bar{\theta}_4,\theta_3]=\theta_5. \]
According to \cite[Theorem 3.6]{CFP}, the linear operator $\db$ on $\mathfrak{g}^{1,0}$, defined in \cite[Section 3.2]{CFP},
amounts to
\[ \db: \mathfrak{g}^{1,0} \rightarrow \mathfrak{g}^{*(0,1)} \otimes \mathfrak{g}^{1,0}:
\ \db V = \bar{\gamma}^i \otimes [\bar{\theta}_i,V]^{1,0}\ \ \text{for}\ V \in \mathfrak{g}^{1,0},\]
which induces an isomorphism $H^1(M,T^{1,0}_{M}) \cong H^1_{\db}(\mathfrak{g}^{1,0})$.
Therefore, from Kodaira-Spencer's deformation theory,
an analytic deformation $M_t$ of $M$ can be constructed by use of the integrable left-invariant Beltrami differential
\[ \varphi(t) = (t_1 \bar{\gamma}^4 + t_2 \bar{\gamma}^5) \otimes \theta_2 + (t_3 \bar{\gamma}^4 + t_4 \bar{\gamma}^5 ) \otimes \theta_5 \]
for $t=(t_1,t_2,t_3,t_4)$ and $|t_4|<1$, which satisfies $\db \varphi(t) = \frac{1}{2}[\varphi(t),\varphi(t)]$
and the so-called Schouten-Nijenhuis bracket $[\cdot,\cdot]$ (cf. \cite[Formula (4.1)]{CFP}) works as
\[ [\bar{\gamma} \otimes \theta, \bar{\gamma}' \otimes \theta']
=  \bar{\gamma}' \w \iota_{\theta'} d\bar{\gamma} \otimes \theta + \bar{\gamma} \w \iota_{\theta} d\bar{\gamma}' \otimes \theta'\ \ \text{for}\ \gamma,\gamma' \in \mathfrak{g}^{*(1,0)}, \theta,\theta' \in \mathfrak{g}^{1,0}.\]
Then the general fibers $M_t$ are still nilmanifolds, determined by the Lie algebra $\mathfrak{g}$ with respect to the decompositions
\[ \mathfrak{g}_{\mathbb{C}} = \mathfrak{g} \otimes_{\mathbb{R}} \mathbb{C} = \mathfrak{g}^{1,0}_{\varphi(t)} \oplus \mathfrak{g}^{0,1}_{\varphi(t)}; \mathfrak{g}^*_{\mathbb{C}} = \mathfrak{g}^* \otimes_{\mathbb{R}} \mathbb{C} = \mathfrak{g}^{*(1,0)}_{\varphi(t)} \oplus \mathfrak{g}^{*(0,1)}_{\varphi(t)}, \]
where the basis of $\mathfrak{g}^{*(1,0)}_{\varphi(t)}$ is given by
$\gamma^i(t) = e^{\iota_{\varphi(t)}} \big( \gamma^i \big) = \big(\1 + \varphi(t)\big) \lrcorner \gamma^i$ for $1 \leq i \leq 5$.
Hence, the structure equation of $\{ \gamma^i(t) \}_{i=1}^5$ is
\[\begin{cases}
d\gamma^1(t)=d\gamma^3(t) = 0, \\
d\gamma^2(t)=-t_1 \gamma^{3\bar{1}}(t)-t_2\gamma^{4\bar{3}}(t), \\
d\gamma^4(t)=\gamma^{1\bar{3}}(t),\\
d\gamma^5(t)=\gamma^{3\bar{4}}(t)-t_3\gamma^{3\bar{1}}(t)-t_4\gamma^{4\bar{3}}(t),\\
\end{cases}\]
where $\gamma^{3\bar{1}}(t)$ denotes $\gamma^{3}(t) \wedge \overline{\gamma^{1}(t)}$, similarly for others.
It is well known from \cite{CF,R,A} that the Bott-Chern cohomologies of nilmanifolds with abelian complex structures
and their small deformation can be calculated via left-invariant differential forms. Remark \ref{n-1-d-clsd-ext} tells
us that the dimension of the space of the $d$-closed left-invariant $(4,4)$-forms is invariant along the deformation $M_t$, which
is equal to $21$. And one can calculate the $\p\db$-exact terms directly by use of the structure equation:
\[\begin{aligned}
& \p_t \db_t \left( \mathfrak{g}^{*(3,3)}_{\varphi(t)}\right)\\
=& \left\langle -(1+|t_4|^2)\gamma^{1234\overline{1234}}(t)
- |t_2|^2 \gamma^{1345\overline{1345}}
+ t_2\overline{t}_4 \gamma^{1345\overline{1234}}(t) + t_4 \overline{t}_2 \gamma^{1234\overline{1345}}(t),\right.\\
& -\gamma^{1235\overline{1235}}(t)-|t_1|^2\gamma^{1345\overline{1345}}(t)-|t_3|^2\gamma^{1234\overline{1234}}(t)
+t_1\overline{t}_3\gamma^{1345\overline{1234}}(t)+t_3\overline{t}_1\gamma^{1234\overline{1345}}(t),\\
& -t_1\gamma^{1345\overline{1234}}(t) + t_2 \gamma^{1345\overline{1235}}(t)
+t_3 \gamma^{1234\overline{1234}}(t)-t_4\gamma^{1234\overline{1235}}(t), \\
& \left. -\overline{t}_1\gamma^{1234\overline{1345}}(t) + \overline{t}_2 \gamma^{1235\overline{1345}}(t)
+\overline{t}_3 \gamma^{1234\overline{1234}}(t)-\overline{t}_4\gamma^{1235\overline{1234}}(t) \right\rangle.
\end{aligned}\]
It is clear that $\dim\p \db \left( \mathfrak{g}^{*(3,3)}_{J} \right)=2$
and $\dim\p_t \db_t\left( \mathfrak{g}^{*(3,3)}_{\varphi(t)}\right)=4$ for general $t$.
Therefore, the Bott-Chern number $h^{4,4}_{BC}(M_t)$ varies from $19$ to $17$ along the deformation $M_t$.}
\end{example}

It may not be difficult to find an example of a non-K\"ahler $p$-K\"ahler manifold in the Fujiki class for $1<p<n-1$ in the literature,
and thus it satisfies the $(p,p+1)$-th mild $\p\db$-lemma, for example \cite[Section 4]{aabb}. However, motivated by Example \ref{bcvary}, we try to ask:
\begin{question}
\emph{Is it possible to find an $n$-dimensional nilmanifold with a left-invariant complex structure of complex dimension $n$,
which admits a left-invariant $p$-K\"ahler metric for $1<p<n-1$ and satisfies the
$(p,p+1)$-th mild $\p\db$-lemma, but the $(p,p)$-Bott Chern number varies along some deformation?
This example would not satisfy the standard $\p\db$-lemma.}
\end{question}

Finally, from the perspective of Corollary \ref{cor-mt}.\ref{balanced-Kahler}, we may have
a clear picture of Angella-Ugarte's result \cite[Theorem 4.9]{au}. Actually, Observation \ref{s-eq} tells us that the $(n-1,n)$-th strong
$\p\db$-lemma decomposes into the mild one and the dual mild one. A locally conformal balanced
metric can be transformed into a balanced one by the $(n-1,n)$-th
dual mild $\p\db$-lemma, from \cite[Theorem 2.5]{AU}. Then the
$(n-1,n)$-th mild $\p\db$-lemma assures the deformation
openness of balanced structures originally from the transformed balanced
metric on the reference fiber, thanks to Corollary \ref{cor-mt}.\ref{balanced-Kahler}.

\subsection{Modification stabilities}
We consider the modification on a compact complex manifold defined as follows and refer the reader to \cite[$\S$ 2]{u} for
its general definition on complex spaces.
\begin{definition}\rm
A \emph{modification} of an $n$-dimensional compact complex manifold $ M $ is a holomorphic map
$$\mu: \tilde{M} \rightarrow  M $$
so that:
\begin{enumerate}[$(i)$]
    \item
$ \tilde{M} $ is also an $n$-dimensional compact complex manifold;
    \item
There exists an analytic subset $S\subseteq M$ of codimension greater than or equal to $1$ such that $\mu\mid_{\tilde{M}\setminus\mu^{-1}(S)}:\tilde{M}\setminus\mu^{-1}(S)\rightarrow M\setminus S$ is a biholomorphism.
%
 \end{enumerate}

\end{definition}

It is a classical result \cite{par} or \cite[Theorem 5.22]{DGMS} that if the modification of a complex manifold is a {$\partial\bar\partial$-manifold}, then so is this manifold. Therefore, each compact complex manifold
in the \emph{Fujiki class $\mathcal{C}$} (i.e. admitting a K\"ahler modification) is a $\partial\bar\partial$-manifold.
The converse is an open question as in \cite[Introduction]{a}: Is the modification of a $\partial\bar\partial$-manifold still a $\partial\bar\partial$-manifold?
A recent result \cite[Theorem 1.3]{YY17} of S. Yang and X. Yang, by means of a blow-up formula for Bott-Chern cohomologies and the characterizations by Angella-Tomassini \cite{at} and Angella-Tardini \cite{atr} of $\partial\bar\partial$-manifolds, and also \cite[Main Theorem 1.1]{ryy} by S. Yang, X. Yang and the first author confirm this question in three dimension, that is, the modification of a $\partial\bar\partial$-threefold is still a $\partial\bar\partial$-threefold. See also more recent \cite[Theorem 2.1]{astt}. These results provide us more classes of complex manifolds satisfying mild $\partial\bar\partial$-lemmata. Moreover, it is natural to ask the analogous:
\begin{question}
\emph{Does the modification of a complex manifold satisfying the mild $(p,q)$-th $\partial\bar\partial$-lemma still satisfy the mild $(p,q)$-th $\partial\bar\partial$-lemma for each $p,q$?
}\end{question}

Now we present a modification stability of $(p,q)$-th mild $\p\db$-lemma on a compact complex manifold.
Let $ M $ be a complex manifold. One has the $\mathbb{K}$-valued de Rham complex $(A^\bullet(M)_{\mathbb{K}},d)$ for $\mathbb{K}\in \{\mathbb{R},\mathbb{C}\}$. Fixing a $d$-closed $1$-form $\phi\in A^1(M)_\mathbb{K}$, we consider another complex. Namely, define
$$d_\phi=d+L_\phi,$$
where $L_\phi:=\phi\wedge\bullet$. The cochain complex
$$(A^\bullet(M)_{\mathbb{K}},d_\phi)$$
can be regarded as the de Rham complex with values in the topologically trivial flat bundle $M\times \mathbb{K}$ with the connection form $\phi$.
We study cohomologies and Hodge theory for general complex manifolds with twisted differentials. More precisely, for $\theta_1,\theta_2\in H^{1,0}_{BC}(M)$, consider the bi-differential $\mathbb{Z}$-graded complex
$$(A^\bullet(M)_{\mathbb{C}},\p_{(\theta_1,\theta_2)},\db_{(\theta_1,\theta_2)}),$$
where $$\p_{(\theta_1,\theta_2)}:=\p+L_{\theta_2}+L_{\overline{\theta_1}},$$
$$\db_{(\theta_1,\theta_2)}:=\db-L_{\overline{\theta_2}}+L_{\theta_1}.$$
It is easy to check that
$$\p_{(\theta_1,\theta_2)}\p_{(\theta_1,\theta_2)}=\db_{(\theta_1,\theta_2)}\db_{(\theta_1,\theta_2)}
=\p_{(\theta_1,\theta_2)}\db_{(\theta_1,\theta_2)}+\db_{{(\theta_1,\theta_2)}}{\p_{(\theta_1,\theta_2)}}=0.$$
Angella and Kasuya \cite{AK1} investigated cohomological properties of this bi-differential complex and considered (more than) two cohomologies:
$$ H^\bullet(A^\bullet(M)_{\mathbb{C}};\p_{(\theta_1,\theta_2)},\db_{(\theta_1,\theta_2)};\p_{(\theta_1,\theta_2)}\db_{(\theta_1,\theta_2)}):= \frac{\ker \p_{(\theta_1,\theta_2)}\cap\ker \db_{(\theta_1,\theta_2)}}{\im\ \p_{(\theta_1,\theta_2)}\db_{(\theta_1,\theta_2)}}$$
and
$$ H^\bullet(A^\bullet(M)_{\mathbb{C}};\p_{(\theta_1,\theta_2)}):= \frac{\ker \p_{(\theta_1,\theta_2)}}{\im\ \p_{(\theta_1,\theta_2)}},$$
which are simply denoted by $H^\bullet_{BC}(M,\theta_1,\theta_2)$ and $H^\bullet_{\p}(M,\theta_1,\theta_2)$, respectively. If one sets $\theta_1=\theta_2=0$,
$H^\bullet_{BC}(M,\theta_1,\theta_2)$ and $H^\bullet_{\p}(M,\theta_1,\theta_2)$ are just the ordinary {Bott-Chern cohomology} $H^{\bullet,\bullet}_{BC}(M)$ and $H^{\bullet,\bullet}_{\p}(M)$ of $ M $, respectively.

Following R. O. Wells in \cite[Theorem 3.1]{Wells74},
Angella and Kasuya proved:
\begin{proposition}[{\cite[Theorem 2.4]{AK1}}]\label{bca-inj}
Let $\mu: \tilde{M} \rightarrow  M $ be a modification of a compact complex manifold $ M $. Then the induced maps
$$\mu^*_{BC}:H^{\bullet}_{BC}(M,\theta_1,\theta_2)\rightarrow H^{\bullet}_{BC}(\tilde{M},\mu^*\theta_1,\mu^*\theta_2),$$
$$\mu^*_{\p}:H^{\bullet}_{\p}(M,\theta_1,\theta_2)\rightarrow H^{\bullet}_{\p}(\tilde{M},\mu^*\theta_1,\mu^*\theta_2)$$
are injective.
\end{proposition}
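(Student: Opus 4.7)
The plan is to adapt the classical strategy of Wells \cite{Wells74} to this twisted setting. The key tool is the push-forward $\mu_*$ acting on currents, defined as the transpose of $\mu^*$ on test forms. Since $\mu$ is a biholomorphism outside a proper analytic subset $S$ of positive codimension, the composition $\mu_*\circ \mu^*$ equals the identity on smooth forms on $M$ (one extends the equality from $M\setminus S$ by continuity and the fact that $S$ is negligible for integration). The idea is then to show that if $\mu^*\alpha$ is a twisted coboundary on $\tilde M$, then $\alpha=\mu_*\mu^*\alpha$ is a twisted coboundary on $M$ at the level of currents, and finally to invoke the agreement of smooth and current twisted cohomology to conclude that $\alpha$ represents the zero class in the smooth complex.

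First I would verify the compatibility of $\mu_*$ with the twisted differentials. Since $\theta_1,\theta_2$ and their conjugates are pulled back from $M$, the projection formula $\mu_*(\mu^*\phi\wedge \eta)=\phi\wedge \mu_*\eta$ yields
\[ \mu_*\circ L_{\mu^*\phi}=L_\phi\circ \mu_*\qquad\text{for}\ \phi\in\{\theta_1,\theta_2,\overline{\theta_1},\overline{\theta_2}\}, \]
as operators on currents. Combined with $\mu_*\p=\p\mu_*$ and $\mu_*\db=\db\mu_*$ on currents (which follow from the transpose-of-$d$ definition together with the fact that $\mu^*$ preserves bidegree), we obtain
\[ \mu_*\circ \p_{(\mu^*\theta_1,\mu^*\theta_2)}=\p_{(\theta_1,\theta_2)}\circ \mu_*,\qquad \mu_*\circ \db_{(\mu^*\theta_1,\mu^*\theta_2)}=\db_{(\theta_1,\theta_2)}\circ \mu_*. \]
Dually, $\mu^*$ commutes with both twisted differentials on smooth forms, so the maps $\mu^*_{BC}$ and $\mu^*_{\p}$ are well defined.

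Now suppose $[\alpha]\in H^\bullet_{BC}(M,\theta_1,\theta_2)$ is in the kernel of $\mu^*_{BC}$, so that $\mu^*\alpha=\p_{(\mu^*\theta_1,\mu^*\theta_2)}\db_{(\mu^*\theta_1,\mu^*\theta_2)}\beta$ for some smooth $\beta$ on $\tilde M$. Applying $\mu_*$ and using $\mu_*\mu^*=\text{id}$ together with the commutativities above, we deduce
\[ \alpha=\p_{(\theta_1,\theta_2)}\db_{(\theta_1,\theta_2)}(\mu_*\beta) \]
as currents on $M$, so $[\alpha]$ vanishes in the twisted Bott--Chern cohomology computed with currents. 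The same argument, replacing $\p_{(\mu^*\theta_1,\mu^*\theta_2)}\db_{(\mu^*\theta_1,\mu^*\theta_2)}\beta$ by $\p_{(\mu^*\theta_1,\mu^*\theta_2)}\beta$, handles $\mu^*_{\p}$.

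The main obstacle, and where most of the technical work sits, is the final step: showing that the inclusion of smooth forms into currents induces isomorphisms on the twisted Bott--Chern cohomology $H^\bullet_{BC}(M,\theta_1,\theta_2)$ and on the twisted $\p$-cohomology $H^\bullet_{\p}(M,\theta_1,\theta_2)$. In the untwisted case this is classical and follows from elliptic regularization for the Kodaira and Bott--Chern Laplacians. In the twisted case, the twisted differentials still square to zero and differ from $\p,\db$ by the zeroth-order operators $L_{\theta_i}$, $L_{\overline{\theta_i}}$; the associated fourth-order Bott--Chern-type Laplacian is therefore a zeroth-order perturbation of the untwisted one, hence still elliptic with the same symbol. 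The standard Hodge decomposition and smoothing then go through verbatim, giving the required quasi-isomorphism between smooth forms and currents. Once this is in hand, the vanishings obtained above at the current level pass to the smooth level, establishing the injectivities of $\mu^*_{BC}$ and $\mu^*_{\p}$.
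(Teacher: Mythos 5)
Your proposal is correct and follows essentially the same route as the source the paper cites: the paper quotes this result from Angella--Kasuya, who (as the paper notes) adapt Wells's push-forward argument, i.e.\ exactly your combination of $\mu_*\mu^*=\mathrm{id}$, the projection formula giving commutation of $\mu_*$ with the twisted differentials, and the smooth-versus-current quasi-isomorphism supplied by the ellipticity of the twisted Bott--Chern-type Laplacian (the ``Hodge theory for twisted differentials'' of \cite{AK1}). You also correctly isolate that last quasi-isomorphism as the step carrying the technical weight.
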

We reformulate Definition \ref{mildddbar} for the $(p,q)$-th mild $\p\b{\p}$-lemma as follows.
\begin{definition}\rm For any positive integers $p,q\leq n$, an $n$-dimensional complex manifold $X$ satisfies the \emph{$(p,q)$-th mild $\p\b{\p}$-lemma} if the induced map $\iota_{BC,\p}^{p,q}: H^{p,q}_{BC}(X) \rightarrow H^{p,q}_{\p}(X)$  by the identity map is injective.
\end{definition}

\begin{proposition}\label{mefm}
With the notations in Proposition \ref{bca-inj}, if $ \tilde{M} $ satisfies the $(p,q)$-th mild $\p\b{\p}$-lemma, then so does $ M $.
\end{proposition}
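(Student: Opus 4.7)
The plan is a short diagram chase using Proposition \ref{bca-inj} (applied in the degenerate case $\theta_1=\theta_2=0$), combined with the reformulation of the $(p,q)$-th mild $\p\db$-lemma as the injectivity of $\iota_{BC,\p}^{p,q}$.

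First I would set up the commutative square of pullback maps
\[
\begin{CD}
H^{p,q}_{BC}(M) @>{\iota_{BC,\p}^{p,q}}>> H^{p,q}_{\p}(M) \\
@V{\mu^*_{BC}}VV @VV{\mu^*_{\p}}V \\
H^{p,q}_{BC}(\tilde{M}) @>>{\iota_{BC,\p}^{p,q}}> H^{p,q}_{\p}(\tilde{M}),
\end{CD}
\]
which commutes because the horizontal arrows are induced by the identity on forms and pullback commutes with $\p$ and $\db$. Setting $\theta_1=\theta_2=0$ in Proposition \ref{bca-inj} gives that the left vertical arrow $\mu^*_{BC}$ is injective (we will not actually need injectivity of $\mu^*_{\p}$, but it holds as well).

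Next I would perform the diagram chase. Let $[\alpha]\in H^{p,q}_{BC}(M)$ lie in $\ker \iota_{BC,\p}^{p,q}$; take a representative $\alpha\in A^{p,q}(M)$ with $\p\alpha=0$, $\db\alpha=0$, and $\alpha=\p\beta$ for some $\beta\in A^{p-1,q}(M)$. Pulling back, $\mu^*\alpha=\p(\mu^*\beta)$, so $\iota_{BC,\p}^{p,q}\bigl(\mu^*_{BC}[\alpha]\bigr)=[\mu^*\alpha]_{\p}=0$ in $H^{p,q}_{\p}(\tilde M)$. By hypothesis the bottom $\iota_{BC,\p}^{p,q}$ is injective, so $\mu^*_{BC}[\alpha]=0$ in $H^{p,q}_{BC}(\tilde M)$. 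Applying injectivity of $\mu^*_{BC}$ from Proposition \ref{bca-inj} then forces $[\alpha]=0$ in $H^{p,q}_{BC}(M)$, yielding the desired injectivity of the top $\iota_{BC,\p}^{p,q}$.

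There is essentially no obstacle here once the statement has been recast via the reformulated definition: the real analytic content sits inside Proposition \ref{bca-inj} (the Wells-type injectivity of $\mu^*_{BC}$ for Bott--Chern cohomology under modifications). The only thing to verify carefully is commutativity of the square, which reduces to the evident compatibility $\mu^*\circ\p=\p\circ\mu^*$ of pullback with the Dolbeault operators and the fact that both $\iota_{BC,\p}^{p,q}$ are induced by the identity on the underlying form complexes. Thus the proof reduces to two lines of diagram chase.
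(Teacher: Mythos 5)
Your proposal is correct and is essentially identical to the paper's own proof: both set $\theta_1=\theta_2=0$ in Proposition \ref{bca-inj} to get injectivity of $\mu^*_{BC}$ and then run the same diagram chase around the commutative square relating $\iota_{BC,\p}^{p,q}$ on $M$ and $\tilde M$. No gaps.
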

\begin{proof} Taking $\theta_1=\theta_2=0$, one has the commutative diagram:
$$\xymatrix{H^{p,q}_{BC}(M) \ar[r]^{\mu^*_{BC}}\ar[d]_{\iota_{BC,\p}^{p,q}}& H^{p,q}_{BC}(\tilde{M})\ar[d]^{\iota_{BC,\p}^{p,q}}\\
H^{p,q}_{\p}(M)\ar[r]^{\mu^*_{\p}}  & H^{p,q}_{\p}(\tilde{M}).}$$
By the $(p,q)$-th mild $\p\b{\p}$-lemma assumption on $\tilde{M}$, the map $\iota_{BC,\p}^{p,q}$ for $\tilde{M}$ is injective and so are $\mu^*_{BC},\mu^*_{\p}$ by Proposition \ref{bca-inj}. So the map $\iota_{BC,\p}^{p,q}$ for ${M}$ is injective, i.e., $ M $ satisfies the $(p,q)$-th mild $\p\b{\p}$-lemma.
\end{proof}

\section{Power series proof of main result}\label{psp}
This section is used to prove main Theorem \ref{main}.
Let us sketch Kodaira-Spencer's proof of local stability theorem
\cite{KS}. Let $F_t$ be the orthogonal projection to the kernel
$\mathds{F}_t$ of the \emph{first $4$-th order Kodaira-Spencer
operator} (also often called \emph{Bott-Chern Laplacian})
\begin{equation}\label{bc-Lap}
\square_{{BC},t}=\p_t\db_t\db_t^*\p_t^*+\db_t^*\p_t^*\p_t\db_t+\db_t^*\p_t\p_t^*\db_t+\p_t^*\db_t\db_t^*\p_t+\db_t^*\db_t+\p_t^*\p_t
\end{equation}
and $\G_t$ the corresponding Green's operator with respect to $\alpha_t$ on $X_t$.
Here
$$\alpha_t=\sqrt{-1}g_{i\bar{j}}(\zeta,t)d \zeta^i \w d \overline{\zeta}^j$$
is a hermitian metric on $X_t$ depending differentiably on $t$ with $\alpha_0$ being a K\"ahler metric on $X_0$,
and $\db_t^*$ (resp., $\p_t^*$) is the dual of $\db_t$ (resp., $\p_t$) with respect to $\alpha_t$. By a cohomological
argument with the upper semi-continuity theorem, they prove that
$F_t$ and $\G_t$ depend differentiably on $t$. Then they can
construct the desired K\"ahler metric on $X_t$ as
$$\widetilde{\alpha_t}=\frac{1}{2}(F_t\alpha_t+\overline{F_t\alpha_t}).$$
See also \cite[Subsection 9.3]{V}.

Our proof is quite different.
As explained in Section \ref{intro}, to prove Theorem \ref{main}, it suffices to prove the special case $p=q$ of:
  \begin{theorem}\label{pq-sm-ext}
  If $X_0$ satisfies the $(p,q+1)$- and $(q,p+1)$-th mild $\p\b{\p}$-lemmata, then there is a  $d$-closed $(p,q)$-form $\Omega(t)$ on $X_t$ depending smoothly on $t$ with $\Omega(0)=\Omega_0$ for any  $d$-closed $\Omega_0\in A^{p,q}(X_0)$.
  \end{theorem}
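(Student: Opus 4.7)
The plan is to construct $\Omega(t)$ in the form
\[
\Omega(t) \;=\; e^{\iota_{\varphi(t)}|\iota_{\overline{\varphi(t)}}}\bigl(\omega(t)\bigr),
\]
where $\omega(t)\in A^{p,q}(X_0)$ is a smooth family with $\omega(0)=\Omega_0$, and then to solve for $\omega(t)$ as a formal power series in $(t,\bar t)$ by induction on degree. Since the extension map is a real linear isomorphism onto $A^{p,q}(X_t)$, only the closedness $d_t\Omega(t)=0$ needs to be enforced. Applying Proposition \ref{main1} together with its conjugate for $\overline{\varphi(t)}$ transports $d_t$ on $X_t$ to an operator on $A^{p,q}(X_0)$ whose curvature-type correction $\iota_{\db\varphi-\tfrac12[\varphi,\varphi]}$ vanishes by integrability, so that $d_t\Omega(t)=0$ becomes a system of PDEs on $X_0$ for $\omega(t)$.

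Decomposing $d_t\Omega(t)$ on $X_t$ by bidegree produces a priori a complicated system, but simultaneously tracking bidegree on $X_t$ together with polynomial order in $(t,\bar t)$ collapses the contributions at each fixed order to exactly two coupled equations, which I would write schematically as
\[
\p\,\omega(t) \;=\; R(t),\qquad \db\,\omega(t) \;=\; S(t),
\]
where $R(t)\in A^{p+1,q}(X_0)$ and $S(t)\in A^{p,q+1}(X_0)$ are polynomial in $\varphi(t),\overline{\varphi(t)},\omega(t)$ and their $\p,\db$-derivatives. Writing $\omega(t)=\sum_{k\geq 0}\omega_k$ with $\omega_k$ homogeneous of degree $k$, at order $k\geq 1$ one obtains $\p\omega_k=R_k$ and $\db\omega_k=S_k$, where $R_k,S_k$ are built from $\omega_0,\ldots,\omega_{k-1}$ and $\varphi_1,\ldots,\varphi_k$.

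I would then show by induction that the inhomogeneous term $S_k$ can be written as $\p\eta_k$ for some $\eta_k\in A^{p-1,q+1}(X_0)$ with $\p\db\eta_k=0$, and symmetrically that $R_k$ is of the conjugate form; the compatibility $\db R_k+\p S_k=0$ follows from $d_t^2=0$ at order $k-1$. The $(p,q+1)$-th mild $\p\db$-lemma on $X_0$ applied to $\eta_k$ supplies $\theta_k\in A^{p-1,q}(X_0)$ with $\p\db\theta_k=\p\eta_k=S_k$, hence $-\p\theta_k$ solves the $\db$-equation; a parallel application of the $(q,p+1)$-th mild $\p\db$-lemma, read through complex conjugation, produces a solution of the $\p$-equation. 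Choosing canonical representatives via the Bott-Chern Green operator on $X_0$ then yields a single $\omega_k$ satisfying both equations simultaneously and furnishes the a priori $C^{k,\alpha}$-estimates needed for convergence of the series in the spirit of Kodaira-Spencer-Kuranishi.

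The main obstacle is the simultaneous solvability of the coupled $(\p,\db)$-system at each order: the two mild $\p\db$-lemmata each produce only a particular solution, and the remaining $\p$- and $\db$-closed ambiguities must be exploited in a coordinated way to reconcile the two particular solutions into a single $\omega_k$. This coordination is exactly what forces the dual hypotheses $(p,q+1)$- and $(q,p+1)$-th mild $\p\db$-lemmata, and it is where the delicate type-versus-order bookkeeping alluded to in the Introduction plays the decisive role. Smoothness in $t$ and convergence of the series follow from the canonical Green-operator choice together with standard Kodaira-Spencer elliptic estimates.
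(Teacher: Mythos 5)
Your proposal follows essentially the same route as the paper: the extension ansatz $e^{\iota_{\varphi}|\iota_{\overline{\varphi}}}$, the reduction of $d_t\Omega(t)=0$ to the vanishing of only the $(p+1,q)$- and $(p,q+1)$-type components, the order-by-order resolution via the two mild $\p\db$-lemmata, and smoothness via canonical Green-operator solutions and elliptic estimates. Two remarks. First, the collapse of the full obstruction system to two equations is not pure bidegree bookkeeping: the other type components of $d(e^{\iota_{\varphi}|\iota_{\overline{\varphi}}}(\Omega))$ do not vanish identically, but are shown to vanish inductively, order by order, \emph{given} that the two distinguished components have been killed up to the current order (this is the content of Proposition \ref{eff} and Lemma \ref{lemma1}); your sketch should make that induction explicit, since the same induction hypothesis is also what yields the $\p\db$-closedness needed to invoke the mild lemmata at the next order. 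Second, the step you single out as ``the main obstacle'' --- reconciling the two particular solutions into a single $\omega_k$ --- is in fact immediate and requires no coordination of ambiguities: the two mild lemmata produce a particular solution of the $\db$-equation that is $\p$-exact (hence killed by $\p$) and a particular solution of the $\p$-equation that is $\db$-exact (hence killed by $\db$), so their \emph{sum} $\db\mu+\p\nu$ solves both equations simultaneously; this is exactly Lemma \ref{slvdb-1} and Proposition \ref{formal-s} in the paper. With that observation your argument closes and coincides with the paper's proof.
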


We first reduce the local stability Theorem \ref{pq-sm-ext} to the Kuranishi family since the family of Beltrami differentials induced by this Kuranishi family plays an important role in the construction of the family of $d$-closed $(p,q)$-forms $\Omega(t)$.

\subsection{Kuranishi family and Beltrami differentials}
We introduce some basics on Kuranishi family of complex structures in this subsection, which is extracted from \cite{RZ15,RwZ} and
obviously originally from \cite{ku}.

By (the proof of) Kuranishi's completeness theorem \cite{ku}, for any compact complex manifold $X_0$, there exists a complete holomorphic family
$\varpi:\mc{K}\to T$ of complex manifolds at the reference point $0\in T$ in the sense that for any differentiable family $\pi:\mc{X}\to B$ with $\pi^{-1}(s_0)=\varpi^{-1}(0)=X_0$, there exist a sufficiently small neighborhood $E\subseteq B$ of $s_0$, and smooth maps $\Phi: \mathcal {X}_E\rightarrow \mathcal {K}$,  $\tau: E\rightarrow T$ with $\tau(s_0)=0$ such that the diagram commutes
$$\xymatrix{\mathcal {X}_E \ar[r]^{\Phi}\ar[d]_\pi& \mathcal {K}\ar[d]^\varpi\\
(E,s_0)\ar[r]^{\tau}  & (T,0),}$$
$\Phi$ maps $\pi^{-1}(s)$ biholomorphically onto $\varpi^{-1}(\tau(s))$ for each $s\in E$, and $$\Phi: \pi^{-1}(s_0)=X_0\rightarrow \varpi^{-1}(0)=X_0$$ is the identity map.
This family is called \emph{Kuranishi family} and constructed as follows. Let $\{\eta_\nu\}_{\nu=1}^m$ be a base for $\mathbb{H}^{0,1}(X_0,T^{1,0}_{X_0})$, where some suitable hermitian metric is fixed on $X_0$ and $m\geq 1$; Otherwise the complex manifold $X_0$ would be \emph{rigid}, i.e., for any differentiable family $\kappa:\mc{M}\to P$ with $s_0\in P$ and $\kappa^{-1}(s_0)=X_0$, there is a neighborhood $V \subseteq P$ of $s_0$ such that $\kappa:\kappa^{-1}(V)\to V$ is trivial. Then one can construct a holomorphic family
$$\label{phi-ps-pp}\varphi(t) = \sum_{|I|=1}^{\infty}\varphi_{I}t^I:=\sum_{j=1}^{\infty}\varphi_j(t),\ I=(i_1,\cdots,i_m),\ t=(t_1,\cdots,t_m)\in \mathbb{C}^m,$$ {for $|t|< \rho$ a small positive constant,} of Beltrami differentials as follows:
$$\label{phi-ps-0}
 \varphi_1(t)=\sum_{\nu=1}^{m}t_\nu\eta_\nu
$$
and for $|I|\geq 2$,
$$\label{phi-ps}
  \varphi_I=\frac{1}{2}\db^*\G\sum_{J+L=I}[\varphi_J,\varphi_{L}].
$$
It is clear that $\varphi(t)$ satisfies the equation
$$\varphi(t)=\varphi_1+\frac{1}{2}\db^*\G[\varphi(t),\varphi(t)].$$
Let
$$T=\{t\ |\ \mathbb{H}[\varphi(t),\varphi(t)]=0 \}.$$
So for each $t\in T$, $\varphi(t)$ satisfies
$$\label{int}
\b{\p}\varphi(t)=\frac{1}{2}[\varphi(t),\varphi(t)],
$$
and determines a complex structure $X_t$ on the underlying differentiable manifold of $X_0$. More importantly, $\varphi(t)$ represents the complete holomorphic family $\varpi:\mc{K}\to T$ of complex manifolds. Roughly speaking, Kuranishi family $\varpi:\mc{K}\to T$ contains all sufficiently small differentiable deformations of $X_0$.

By means of these, one can reduce the local stability Theorem \ref{pq-sm-ext} to the Kuranishi family by shrinking $E$ if necessary, that is, it suffices to construct a $p$-K\"{a}hler metric on each $X_t$. From now on, we use $\varphi(t)$ and  $\varphi$ interchangeably to denote this holomorphic family of integrable Beltrami differentials, and assumes $m=1$ for simplicity.

\subsection{Obstruction equation and construction of power series}
Now we begin to prove the $d$-closed smooth extension of $(p,q)$-forms as in Theorem \ref{pq-sm-ext} by using power series method.

As both $e^{\iota_{(\1-\b{\varphi}\varphi)^{-1}\b{\varphi}}}$ and
$e^{\iota_{\varphi}}$ are invertible operators when $t$ is
sufficiently small, it follows that for any $\Omega \in
A^{p,q}(X_0)$,
\begin{align}\label{2.1.1}
  e^{\iota_{\varphi}|\iota_{\b{\varphi}}}(\Omega)=e^{\iota_{\varphi}}\circ e^{\iota_{(\1-\b{\varphi}\varphi)^{-1}\b{\varphi}}}\circ e^{-\iota_{(\1-\b{\varphi}\varphi)^{-1}\b{\varphi}}}\circ e^{-\iota_{\varphi}}\circ e^{\iota_{\varphi}|\iota_{\b{\varphi}}}(\Omega).
\end{align}
Set
\begin{align}\label{2.1.2}
  \tilde{\Omega}=e^{-\iota_{(\1-\b{\varphi}\varphi)^{-1}\b{\varphi}}}\circ e^{-\iota_{\varphi}}\circ e^{\iota_{\varphi}|\iota_{\b{\varphi}}}(\Omega),
\end{align}
where $\Omega$ and $\tilde{\Omega}$ are apparently one-to-one
correspondence. Here we follow the notations:
$\overline{\varphi} \varphi = \varphi \lc \overline{\varphi}$,
$\1$ is the identity operator defined as:
$$\1=\frac{1}{p+q}\left(\sum_i^n dz^i\otimes \frac{\p\ }{\p z^i}+\sum_i^n d\bar z^i\otimes \frac{\p\ }{\p \bar z^i}\right)$$
when it acts on $(p,q)$-forms of a complex manifold, and similarly for others.
 And it is easy to check that the operator
$$e^{-\iota_{(\1-\b{\varphi}\varphi)^{-1}\b{\varphi}}}\circ
e^{-\iota_{\varphi}}\circ e^{\iota_{\varphi}|\iota_{\b{\varphi}}}$$
preserves the form types and thus $\tilde{\Omega}$ is still a $(p,q)$-form. In fact, for any $(p,q)$-form $\alpha$ on $X_0$,
we will find
\begin{align*}
\begin{split}
&\ e^{-\iota_{(\1-\b{\varphi}\varphi)^{-1}\b{\varphi}}}\circ
e^{-\iota_{\varphi}}\circ e^{\iota_{\varphi}|\iota_{\b{\varphi}}}(\alpha) \\
=&\ \alpha_{i_1\cdots i_p\o{j_1}\cdots\o{j_q}}
dz^{i_1}\wedge\cdots\wedge dz^{i_p} \wedge(\1-\b{\varphi}\varphi)\l
d\b{z}^{j_1}\wedge \cdots\wedge (\1-\b{\varphi}\varphi)\l
d\b{z}^{j_q}\\
\in&\ A^{p,q}(X_0),
\end{split}
\end{align*}
where $\alpha=\alpha_{i_1\cdots
i_p\o{j_1}\cdots\o{j_q}}dz^{i_1}\wedge\cdots\wedge dz^{i_p}\wedge
d\b{z}^{j_1}\wedge \cdots\wedge d\b{z}^{j_q}$. Together with
\eqref{2.1.1} and \eqref{2.1.2}, Proposition \ref{main1} implies that
\begin{align}\label{2.2.1}
\begin{split}
  d(e^{\iota_{\varphi}|\iota_{\b{\varphi}}}(\Omega))&=d\circ e^{\iota_{\varphi}}\circ e^{\iota_{(\1-\b{\varphi}\varphi)^{-1}\b{\varphi}}}(\tilde{\Omega}) \\
                                                    &=e^{\iota_{\varphi}}\circ(\b{\p}+[\p, \iota_{\varphi}]+\p)\circ e^{\iota_{(\1-\b{\varphi}\varphi)^{-1}\b{\varphi}}}(\tilde{\Omega}) \\
                                                    &=e^{\iota_{\varphi}}(\b{\p}_{\varphi}+\p)\sum_{k=0}^{+\infty}A_k\\
                                                    &=e^{\iota_{\varphi}}\left(\b{\p}_{\varphi}A_0 +\sum_{k=0}^{+\infty}(\p A_k+\b{\p}_{\varphi}A_{k+1})\right),
\end{split}
\end{align}
where $$A_k:=\frac{(\iota_{(\1-\b{\varphi}\varphi)^{-1}\b{\varphi}})^k}{k!}(\tilde{\Omega})$$  is a $(p+k,q-k)$-form and $$\b{\p}_{\varphi}:=\b{\p}+[\p,\iota_\varphi].$$
Thus, $d(e^{\iota_{\varphi}|\iota_{\b{\varphi}}}(\Omega))=0$ amounts to
$$\label{obstruction}
  \b{\p}_{\varphi}A_0=0,\quad \p A_k+\b{\p}_{\varphi}A_{k+1}=0,\quad k=0,1,2,\ldots.
$$

\begin{proposition} \label{d-cri}
For any given $(p,q)$-form $\Omega$,
  $$d(e^{\iota_{\varphi}|\iota_{\b{\varphi}}}(\Omega))=0$$ is equivalent to
\begin{align}\label{1.4}
  \sum_{k=0}^\infty\left(\b{\p}\circ\frac{\iota^{k-1}_{\varphi}}{(k-1)!}+\p\circ\frac{\iota^{k}_{\varphi}}{k!}\right)A_{n-p-(l-k)}=0,
\end{align}
where $\max\{1,n-p-q\}\leq l\leq\min\{2n-p-q,n+1\}$, $\iota^{k}_{\varphi}=0$ for $k<0$ and $0!=1$.
\end{proposition}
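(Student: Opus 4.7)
The plan is to bypass the intermediate identity of \eqref{2.2.1} and compute $d(e^{\iota_\varphi|\iota_{\b\varphi}}(\Omega))$ by directly expanding both $e^{\iota_\varphi}$ and $d$, and then sorting the result by bidegree. Since a form vanishes if and only if all of its pure-type components vanish, the equation $d(e^{\iota_\varphi|\iota_{\b\varphi}}(\Omega))=0$ is equivalent to the vanishing of its $(r,s)$-component for every $(r,s)$ with $r+s=p+q+1$, so the whole task reduces to matching each such component with one of the sums in \eqref{1.4}.

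First, we invoke the factorization \eqref{2.1.1}--\eqref{2.1.2} to write
$$e^{\iota_\varphi|\iota_{\b\varphi}}(\Omega)=e^{\iota_\varphi}\Bigl(\sum_{k\geq 0}A_k\Bigr)=\sum_{k,j\geq 0}\frac{\iota_\varphi^{j}}{j!}A_k,$$
noting that each summand $\frac{\iota_\varphi^{j}}{j!}A_k$ has pure type $(p+k-j,\,q-k+j)$, since $A_k$ is of type $(p+k,q-k)$ and $\iota_\varphi$ shifts bidegree by $(-1,+1)$. Applying $d=\p+\db$ termwise, the $\p$-summand $\p\circ\frac{\iota_\varphi^{j}}{j!}A_k$ has bidegree $(p+k-j+1,\,q-k+j)$ and the $\db$-summand $\db\circ\frac{\iota_\varphi^{j}}{j!}A_k$ has bidegree $(p+k-j,\,q-k+j+1)$, both of total degree $p+q+1$. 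Parametrize an admissible bidegree $(r,s)$ with $r+s=p+q+1$ by an integer $l$ through $(r,s)=(n-l+1,\,p+q-n+l)$. A direct index match then shows that $\p\circ\frac{\iota_\varphi^{j}}{j!}A_k$ occupies this bidegree precisely when $j=k+p+l-n$, whereas $\db\circ\frac{\iota_\varphi^{j}}{j!}A_k$ does so precisely when $j=k+p+l-n-1$. Relabeling the $A$-index as $n-p-(l-k)$, the $(n-l+1,\,p+q-n+l)$-component of $d(e^{\iota_\varphi|\iota_{\b\varphi}}(\Omega))$ reassembles exactly into the sum in \eqref{1.4}. The stated range $\max\{1,n-p-q\}\le l\le \min\{2n-p-q,n+1\}$ is then forced by the admissibility inequalities $0\le n-l+1\le n$ and $0\le p+q-n+l\le n$ for bidegrees on an $n$-dimensional complex manifold.

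No serious obstacle is expected: the argument is essentially a reindexing together with the observation that each $A_k$ is of pure type. The only care needed is at the boundary $k=0$, where the convention $\iota_\varphi^{-1}=0$ correctly suppresses the otherwise undefined $\db$-summand, and at the extreme values of $l$, where one should check that the bidegrees omitted from the stated range are indeed ruled out by the dimension of $X_0$.
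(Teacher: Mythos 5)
Your proposal is correct and follows essentially the same route as the paper: both factor $e^{\iota_{\varphi}|\iota_{\b{\varphi}}}(\Omega)=e^{\iota_{\varphi}}\bigl(\sum_k A_k\bigr)$ via \eqref{2.1.1}--\eqref{2.1.2}, expand $d=\p+\db$ against the exponential, and sort the resulting double sum by bidegree, with the paper's ``degree $(+(n-p-l+1),-(n-p-l))$ part of the operator'' being exactly your $(n-l+1,\,p+q-n+l)$-component of the image. The index matching, the role of the convention $\iota_{\varphi}^{-1}=0$, and the derivation of the range of $l$ from $(r,s)\in[0,n]\times[0,n]$ all agree with the paper's argument.
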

\begin{proof}
  Recall that $\tilde{\Omega}=e^{-\iota_{(\1-\b{\varphi}\varphi)^{-1}\b{\varphi}}}\circ e^{-\iota_{\varphi}}\circ e^{\iota_{\varphi}|\iota_{\b{\varphi}}}(\Omega)$ and then
  \begin{align*}
    d(e^{\iota_{\varphi}|\iota_{\b{\varphi}}}(\Omega))&=d\circ e^{\iota_{\varphi}}\circ e^{\iota_{(\1-\b{\varphi}\varphi)^{-1}\b{\varphi}}}(\tilde{\Omega})\\
    &=(\b{\p}\circ e^{\iota_{\varphi}}\circ e^{\iota_{(\1-\b{\varphi}\varphi)^{-1}\b{\varphi}}}+\p\circ e^{\iota_{\varphi}}\circ e^{\iota_{(\1-\b{\varphi}\varphi)^{-1}\b{\varphi}}})(\tilde{\Omega})\\
    &=\sum_{k_1,k_2=0}^{+\infty}\left(\b{\p}\circ\frac{\iota^{k_1}_{\varphi}}{k_1!}
    +\p\circ\frac{\iota^{k_1}_{\varphi}}{k_1!}\right)\circ\frac{\iota^{k_2}_{(\1-\b{\varphi}\varphi)^{-1}\b{\varphi}}}{k_2!}(\tilde{\Omega}).
  \end{align*}
  Note that the part of degree $(+(n-p-l+1),-(n-p-l))$ in the operator $d\circ e^{\iota_{\varphi}}\circ e^{\iota_{(\1-\b{\varphi}\varphi)^{-1}\b{\varphi}}}$ is
  \begin{align}\label{1.5}
    \sum_{k=0}^{\infty}\left(\b{\p}\circ\frac{\iota^{k-1}_{\varphi}}{(k-1)!}+\p\circ\frac{\iota^{k}_{\varphi}}{k!}\right)\circ\frac{\iota^{n-p-l+k}_{(\1-\b{\varphi}\varphi)^{-1}\b{\varphi}}}{(n-p-l+k)!}(\tilde{\Omega})
  \end{align}
  since \begin{align*}
  \ &(+(n-p-l+1),-(n-p-l))\\
 =\ &(n-p-l+k)(1,-1)+(k-1)(-1,1)+(0,1)\\
 =\ &(n-p-l+k)(1,-1)+k(-1,1)+(1,0).
\end{align*}
  This is exactly the left-hand side of (\ref{1.4}). So $d(e^{\iota_{\varphi}|\iota_{\b{\varphi}}}(\Omega))=0$ is equivalent to the vanishing of (\ref{1.5}) for each $l$ such that
$$
  (p,q)+(+(n-p-l+1),-(n-p-l))\in [0,n]\times [0,n],
$$
i.e., $\max\{1,n-p-q\}\leq l\leq\min\{2n-p-q,n+1\}$.
\end{proof}

\begin{remark} \emph{We consider two special cases of Proposition \ref{d-cri}:
\begin{enumerate}[$(i)$]
    \item
For $p=q=n-1$, (\ref{1.4}) is reduced to
$$
    \begin{cases}
      \p A_{0}+(\b{\p}+\p\circ\iota_{\varphi})A_{1}=0,\\
      (\b{\p}+\p\circ\iota_{\varphi})A_{0}+(\b{\p}\circ\iota_{\varphi}+\frac{1}{2}\p\circ\iota^2_{\varphi})A_{1}=0,
    \end{cases}
$$
which is exactly the system \cite[(3.8)]{RwZ} of obstruction equations.
    \item
For $p=q=1$, (\ref{1.4}) is reduced to
$$
    \begin{cases}
      \p\Omega-\p\circ\iota_{\b{\varphi}\varphi}(\Omega)+(\b{\p}+\p\circ\iota_{\varphi})\circ\iota_{\b{\varphi}}(\Omega)=0,\\
      (\b{\p}+\p\circ\iota_{\varphi})(\Omega-\iota_{\b{\varphi}\varphi}(\Omega))+(\b{\p}\circ\iota_{\varphi}+\frac{1}{2}\p\circ\iota^2_{\varphi})\iota_{\b{\varphi}}(\Omega)=0,\\
      \p\circ\iota_{\b{\varphi}}(\Omega)=0.
    \end{cases}
$$
Since $-\iota_{\b{\varphi}\varphi}+\iota_{\varphi}\circ\iota_{\b{\varphi}}=-\iota_{\varphi\b{\varphi}}+\iota_{\b{\varphi}}\circ\iota_{\varphi}$ and $2\iota_{\varphi}\circ\iota_{\b{\varphi}\varphi}\alpha=\iota^2_{\varphi}\circ\iota_{\b{\varphi}}(\alpha)$ for $(1,1)$-form $\alpha$ as shown in \cite[Proposition 2.6]{RwZ},
 one has
$$
    \begin{cases}
     \b{\p}\Omega=\b{\p}\circ(\iota_{\b{\varphi}\varphi}-\iota_{\varphi}\circ\iota_{\b{\varphi}})(\Omega)-\p\circ\iota_{\varphi}(\Omega),\\
     \p\Omega=\p\circ(\iota_{\varphi\b{\varphi}}-\iota_{\b{\varphi}}\circ\iota_{\varphi})(\Omega)-\b{\p}\circ\iota_{\b{\varphi}}(\Omega),\\
     \p\circ\iota_{\b{\varphi}}(\Omega)=0,
    \end{cases}
$$
which is exactly the system of obstruction equations given in \cite[Proposition 2.7]{RwZ}.
\end{enumerate}}
\end{remark}

Unfortunately, the system \eqref{1.4} of obstruction equations consists of too many equations, difficult to solve, and we try to reduce it to one with only
two equations as in Proposition \ref{eff}.

We will use the homogenous notation for a power series here and henceforth. Assuming that $\alpha(t)$ is a power
series of (bundle-valued) $(p,q)$-forms, expanded as
  $$\alpha(t)=\sum_{k=0}^{\infty}\sum_{i+j=k}\alpha_{i,j}t^i\b{t}^j,$$
one uses the notation
\[ \begin{cases}
\alpha(t) = \sum^{\infty}_{k=0} \alpha_k, \\[4pt]
\alpha_k = \sum_{i+j=k} \alpha_{i,j}t^i \overline{t}^j, \\
\end{cases} \]
where $\alpha_k$ is the $k$-order homogeneous part in the expansion
of $\alpha(t)$ and all $\alpha_{i,j}$ are smooth (bundle-valued)
$(p,q)$-forms on $X_0$ with $\alpha(0)=\alpha_{0,0}$.

\begin{lemma}\label{lemma1}
  If $d(e^{\iota_{\varphi}|\iota_{\b{\varphi}}}(\Omega))_{N_1}=0$ for any $N_1\leq N$, then
  $$ (\b{\p}_{\varphi}A_0)_{N_1}=0,\quad (\p A_k+\b{\p}_{\varphi}A_{k+1})_{N_1}=0,\quad k=0,1,2,\ldots$$
  for any $N_1\leq N$.
\end{lemma}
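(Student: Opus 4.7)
The plan is to apply the pointwise-invertible operator $e^{-\iota_\varphi}$ to the identity (2.2.1), then exploit that the right-hand side splits cleanly by bidegree. Since $\iota_\varphi$ is pointwise nilpotent, $e^{-\iota_\varphi}$ is a genuine two-sided inverse of $e^{\iota_\varphi}$, and (2.2.1) yields
$$e^{-\iota_\varphi}\bigl(d(e^{\iota_{\varphi}|\iota_{\b{\varphi}}}(\Omega))\bigr)=\b{\p}_\varphi A_0+\sum_{k=0}^{+\infty}(\p A_k+\b{\p}_\varphi A_{k+1}).$$

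The first key observation is bidegree counting. Recall that $A_k$ is of bidegree $(p+k,q-k)$, and $\b{\p}_\varphi=\b{\p}+[\p,\iota_\varphi]$ shifts bidegree by $(0,1)$. Hence $\b{\p}_\varphi A_0$ sits in bidegree $(p,q+1)$, while $\p A_k+\b{\p}_\varphi A_{k+1}$ sits in bidegree $(p+k+1,q-k)$ for each $k\geq 0$. These bidegrees are pairwise distinct; consequently the vanishing of the right-hand side of the displayed identity, at each fixed $t$-order, is equivalent to the simultaneous vanishing of each summand at that same $t$-order.

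The second ingredient is order bookkeeping under $e^{-\iota_\varphi}$. Because $\varphi=\sum_{j\geq 1}\varphi_j$ has no constant term in $t$, every factor $\iota_\varphi$ in the expansion $e^{-\iota_\varphi}=\sum_{\ell\geq 0}\frac{(-1)^\ell}{\ell!}\iota_\varphi^\ell$ contributes a $t$-order of at least one, so a typical term $\iota_{\varphi_{j_1}}\cdots\iota_{\varphi_{j_\ell}}\alpha_k$ appears at total order $k+j_1+\cdots+j_\ell\geq k$. Therefore, if $\alpha(t):=d(e^{\iota_{\varphi}|\iota_{\b{\varphi}}}(\Omega))$ satisfies $\alpha_{N_1}=0$ for every $N_1\leq N$, the same vanishing holds for $e^{-\iota_\varphi}\alpha$, and the bidegree splitting of the previous paragraph then delivers $(\b{\p}_\varphi A_0)_{N_1}=0$ and $(\p A_k+\b{\p}_\varphi A_{k+1})_{N_1}=0$ for every $k$ and every $N_1\leq N$, as desired. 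The only subtle point is this order-preservation step; it is routine but essential, relying on $\varphi=O(t)$ so that applying $e^{-\iota_\varphi}$ cannot contaminate low-order parts with higher-order data.
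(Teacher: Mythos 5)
Your argument is correct and follows essentially the same route as the paper: apply $e^{-\iota_\varphi}$ to \eqref{2.2.1}, take the $N_1$-th order part, and separate the summands by bidegree (the paper's ``by comparing degrees''). Your explicit justification of the order-preservation under $e^{-\iota_\varphi}$ (using $\varphi=O(t)$) is a detail the paper leaves implicit, but it is the same proof.
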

\begin{proof}
From (\ref{2.2.1}), it follows that
  \begin{align*}
 e^{-\iota_{\varphi}}d(e^{\iota_{\varphi}|\iota_{\b{\varphi}}}(\Omega))=\b{\p}_{\varphi}A_0 +\sum_{k=0}^{+\infty}(\p A_k+\b{\p}_{\varphi}A_{k+1}).
\end{align*}
For any $N_1\leq N$,
$$
  0= (e^{-\iota_{\varphi}}d(e^{\iota_{\varphi}|\iota_{\b{\varphi}}}(\Omega)))_{N_1}=(\b{\p}_{\varphi}A_0)_{N_1} +\sum_{k=0}^{+\infty}(\p A_k+\b{\p}_{\varphi}A_{k+1})_{N_1}.
$$
By comparing degrees, we complete the proof.
\end{proof}

As for (\ref{2.2.1}), one can also have
$$d(e^{\iota_{\varphi}|\iota_{\b{\varphi}}}(\alpha))
=e^{\iota_{\varphi}|\iota_{\b{\varphi}}}\circ\left(e^{-\iota_{\varphi}|-\iota_{\b{\varphi}}}\circ e^{\iota_{\varphi}}\circ\left([\p,\iota_{\varphi}]+\b{\p}+\p\right)
    \circ e^{-\iota_{\varphi}}\circ
    e^{\iota_{\varphi}|\iota_{\b{\varphi}}}(\alpha)\right).
$$
A long local calculation shows that
\begin{equation}
 \begin{aligned}\label{2.7}
    &d(e^{\iota_{\varphi}|\iota_{\b{\varphi}}}(\alpha))\\=\ &
    e^{\iota_{\varphi}|\iota_{\b{\varphi}}}\left(\left((\1-\b{\varphi}\varphi)^{-1}-(\1-\b{\varphi}\varphi)^{-1}\b{\varphi}\right)\Finv\left([\p,\iota_{\varphi}]
    +\b{\p}+\p\right)(\1-\b{\varphi}\varphi+\b{\varphi})\Finv\alpha\right).
 \end{aligned}
\end{equation}
Here we use one notation $\Finv$, first introduced in \cite[Subsection 2.1]{RwZ}, to denote the
\emph{simultaneous contraction} on each component of a complex
differential form. For example,
$(\1-\b{\varphi}\varphi+\b{\varphi})\Finv\alpha$ means that the
operator $(\1-\b{\varphi}\varphi+\b{\varphi})$ acts on $\alpha$
simultaneously as:
\begin{align*}
&\ (\1-\b{\varphi}\varphi+\b{\varphi})\Finv(f_{i_1\cdots
i_p\o{j_1}\cdots\o{j_q}}dz^{i_1}\wedge\cdots \wedge dz^{i_p}\wedge
d\b{z}^{j_1}\wedge\cdots \wedge d\b{z}^{j_q})
\\
=&\ f_{i_1\cdots
i_p\o{j_1}\cdots\o{j_q}}(\1-\b{\varphi}\varphi+\b{\varphi})\l
dz^{i_1} \wedge\cdots\wedge(\1-\b{\varphi}\varphi +\b{\varphi})\l
dz^{i_p}
\\&\qquad\quad\quad\wedge(\1-\b{\varphi}\varphi+\b{\varphi})\l
d\b{z}^{j_1}\wedge\cdots\wedge(\1-\b{\varphi}\varphi+\b{\varphi})\l
d\b{z}^{j_q},
\end{align*}
if $\alpha$ is locally expressed by:
$$\alpha=f_{i_1\cdots i_p\o{j_1}\cdots\o{j_q}}dz^{i_1}\wedge \cdots\wedge dz^{i_p}\wedge d\b{z}^{j_1}\wedge \cdots\wedge d\b{z}^{j_q}.$$
This new simultaneous contraction is well-defined since $\varphi(t)$
is a  global $(1,0)$-vector valued $(0, 1)$-form on $X_0$ (See
\cite[Pages $150-151$]{MK}) as reasoned in \cite[Proof of Lemma
2.8]{RZ15}.
Moreover, we know that
$$
  e^{-\iota_{\varphi}|-\iota_{\b{\varphi}}}\circ e^{\iota_{\varphi}}=\left((\1-\b{\varphi}\varphi)^{-1}-(\1-\b{\varphi}\varphi)^{-1}\b{\varphi}\right)\Finv: A^{p,q}(X_0)\to \bigoplus_{i=0}^{\min\{q,n-p\}}A^{p+i,q-i}(X_0).
$$
Thus, by carefully comparing the types of forms in both sides  of
(\ref{2.7}), we have
 \begin{align}\label{2.6}
   \b{\p}_t(e^{\iota_{\varphi}|\iota_{\b{\varphi}}}(\alpha))=e^{\iota_{\varphi}|\iota_{\b{\varphi}}}\left((\1-\b{\varphi}\varphi)^{-1}\Finv([\p,\iota_{\varphi}]+\b{\p})(\1-\b{\varphi}\varphi)\Finv\alpha\right).
 \end{align}
See \cite[Proposition 2.13]{RZ15} and \cite[(2.14)]{RwZ} for more details of \eqref{2.6}.

Here and henceforth one denotes by $(\alpha)^{p,q}$ the $(p,q)$-type part of a $(p+q)$-degree complex differential form $\alpha$.
\begin{proposition}\label{eff}
  The obstruction equation $d(e^{\iota_{\varphi}|\iota_{\b{\varphi}}}(\Omega))=0$ is also equivalent to
  \begin{equation}\label{1.6}
    \begin{cases}
      \sum_{k=0}^{\infty}\left(\b{\p}\circ\frac{\iota^{k-1}_{\varphi}}{(k-1)!}+\p\circ\frac{\iota^{k}_{\varphi}}{k!}\right)\circ\frac{\iota^{k}_{(\1-\b{\varphi}\varphi)^{-1}\b{\varphi}}}{k!}(\tilde{\Omega})=0,\\
      \sum_{k=0}^{\infty}\left(\b{\p}\circ\frac{\iota^{k-1}_{\varphi}}{(k-1)!}+\p\circ\frac{\iota^{k}_{\varphi}}{k!}\right)\circ\frac{\iota^{k-1}_{(\1-\b{\varphi}\varphi)^{-1}\b{\varphi}}}{(k-1)!}(\tilde{\Omega})=0.
    \end{cases}
  \end{equation}
\end{proposition}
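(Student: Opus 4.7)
My plan is to reduce the many-equation obstruction \eqref{1.4} of Proposition \ref{d-cri} to just the two equations of \eqref{1.6} by exploiting the rigid bidegree structure of $d(e^{\iota_{\varphi}|\iota_{\b{\varphi}}}(\Omega))$ as a form on $X_t$. First I would identify \eqref{1.6} with two specific cases of \eqref{1.4}: by a direct bookkeeping of the bidegree shifts in the double-sum expansion of $d\circ e^{\iota_\varphi}\circ e^{\iota_{(\1-\b{\varphi}\varphi)^{-1}\b{\varphi}}}$ from the proof of Proposition \ref{d-cri}, the first equation of \eqref{1.6} is precisely the $X_0$-bidegree-$(p+1,q)$ component of $d(e^{\iota_{\varphi}|\iota_{\b{\varphi}}}(\Omega))$ (the $l=n-p$ case, shift $(1,0)$), while the second is the $(p,q+1)$-component (the $l=n-p+1$ case, shift $(0,1)$). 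The forward implication $d(e^{\iota_{\varphi}|\iota_{\b{\varphi}}}(\Omega))=0\Rightarrow\eqref{1.6}$ is then immediate since every $X_0$-bidegree part of $d(\cdots)$ must vanish.

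For the converse I would use that $e^{\iota_{\varphi}|\iota_{\b{\varphi}}}(\Omega)\in A^{p,q}(X_t)$ is pure of type $(p,q)$, so $d(e^{\iota_{\varphi}|\iota_{\b{\varphi}}}(\Omega))=\p_t(\cdots)+\b{\p}_t(\cdots)$ lies only in $A^{p+1,q}(X_t)\oplus A^{p,q+1}(X_t)$. Writing this as $\omega_1+\omega_2$ with $\omega_1\in A^{p+1,q}(X_t)$ and $\omega_2\in A^{p,q+1}(X_t)$, and expanding the $X_t$-frame $d\zeta^I\w d\b{\zeta}^J$ in $X_0$-coordinates via $d\zeta^i=dz^i+\varphi^i_{\b{k}}d\b{z}^k$ and $d\b{\zeta}^j=d\b{z}^j+\b{\varphi}^j_l dz^l$, the linear map that takes the $X_t$-coefficients of $(\omega_1,\omega_2)$ to the $X_0$-$(p+1,q)$ and $(p,q+1)$ leading parts of $\omega_1+\omega_2$ becomes the identity when $\varphi=0$. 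Since the Kuranishi Beltrami differential $\varphi=\varphi(t)$ is small for $|t|$ small, this map remains invertible there, so assuming \eqref{1.6} forces $\omega_1=\omega_2=0$, i.e.\ $d(e^{\iota_{\varphi}|\iota_{\b{\varphi}}}(\Omega))=0$.

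The principal technical point is this invertibility of the change-of-basis for small $|t|$, which is fundamentally a perturbation-of-identity on the finite-dimensional spaces of coefficient multi-indices at each point. A cleaner conceptual alternative, which I would probably use if the direct bookkeeping becomes cumbersome, is to apply the inverse of the extension map in Definition \ref{map}, a bidegree-preserving real linear isomorphism $A^{r,s}(X_t)\to A^{r,s}(X_0)$ on each $(r,s)$: it carries $d(\cdots)$ to a single pair of pure $X_0$-bidegree forms whose vanishing is transparently equivalent to $d(\cdots)=0$ on the one hand and, by a short rearrangement using \eqref{2.2.1} together with the invertibility of $(\1-\b{\varphi}\varphi)\Finv$, to \eqref{1.6} on the other.
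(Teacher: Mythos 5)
Your proposal is correct, and the forward direction coincides with the paper's: both identify the two equations of \eqref{1.6} as the $X_0$-bidegree $(p+1,q)$ and $(p,q+1)$ components of $d(e^{\iota_{\varphi}|\iota_{\b{\varphi}}}(\Omega))$ via the bookkeeping already done in Proposition \ref{d-cri}. For the converse, however, you take a genuinely different route. The paper argues by induction on the order of the power series in $t$: using \eqref{dbt} (derived from \eqref{2.6} and \eqref{2.2.1}) it writes $\b{\p}_t e^{\iota_{\varphi}|\iota_{\b{\varphi}}}(\Omega)$ as the $(p,q+1)$-component of $d(e^{\iota_{\varphi}|\iota_{\b{\varphi}}}(\Omega))$ minus terms of the form $\frac{\iota^{k+1}_{\varphi}}{(k+1)!}(\p A_k+\b{\p}_{\varphi}A_{k+1})$, and then Lemma \ref{lemma1} plus the fact that $\varphi=O(t)$ kills the correction terms at order $N+1$, yielding $d(e^{\iota_{\varphi}|\iota_{\b{\varphi}}}(\Omega))_{N+1}=0$. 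Your argument instead observes that $d(e^{\iota_{\varphi}|\iota_{\b{\varphi}}}(\Omega))=\omega_1+\omega_2$ with $\omega_1\in A^{p+1,q}(X_t)$, $\omega_2\in A^{p,q+1}(X_t)$, and that the pointwise projection of $A^{p+1,q}(X_t)\oplus A^{p,q+1}(X_t)$ onto $A^{p+1,q}(X_0)\oplus A^{p,q+1}(X_0)$ is a perturbation of the identity between spaces of equal finite dimension, hence injective for small $|t|$ by the uniform smallness of $\varphi(t)$ on the compact manifold; this is sound and arguably more elementary, avoiding the formal power-series induction entirely. What the paper's version buys is that the order-by-order vanishing statements of Lemma \ref{lemma1} established in this induction are exactly what is reused afterwards to verify \eqref{2.1} at each step of the iteration, so the two arguments are packaged together. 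One small caveat on your ``cleaner conceptual alternative'': pulling back by the inverse of $e^{\iota_{\varphi}|\iota_{\b{\varphi}}}$ identifies $\omega_1,\omega_2$ with $\p_t$- and $\b{\p}_t$-images, but the $X_0$-components appearing in \eqref{1.6} mix contributions from both $\omega_1$ and $\omega_2$ (the sums over $\iota^{k+1}_{\varphi}(\p A_k+\b{\p}_{\varphi}A_{k+1})$ in \eqref{dbt}), so the ``short rearrangement'' still amounts to the same invertibility of a near-identity operator; your main perturbation argument already covers this.
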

\begin{proof}
From the proof of Proposition \ref{d-cri}, it is easy to see that the left-hand side of the first equation in (\ref{1.6}) is $\left(d(e^{\iota_{\varphi}|\iota_{\b{\varphi}}}(\Omega))\right)^{p+1,q}$, while the other one is $\left(d(e^{\iota_{\varphi}|\iota_{\b{\varphi}}}(\Omega))\right)^{p,q+1}$. Thus, (\ref{1.6}) holds if $d(e^{\iota_{\varphi}|\iota_{\b{\varphi}}}(\Omega))=0$.

Conversely, we assume that (\ref{1.6}) holds. By \eqref{2.6} and (\ref{2.2.1}), one compares types of forms to get
\begin{equation}\label{dbt}
\begin{aligned}
 \ &\b{\p}_t e^{\iota_{\varphi}|\iota_{\b{\varphi}}}(\Omega)\\
=\ &e^{\iota_{\varphi}|\iota_{\b{\varphi}}}\circ(\1-\b{\varphi}\varphi)^{-1}\Finv\b{\p}_{\varphi}A_0\\
=\ &e^{\iota_{\varphi}|\iota_{\b{\varphi}}}\circ(\1-\b{\varphi}\varphi)^{-1}\Finv\left(\left(d(e^{\iota_{\varphi}|\iota_{\b{\varphi}}}(\Omega))\right)^{p,q+1}-\sum_{k=0}^{+\infty}\frac{\iota^{k+1}_{\varphi}}{(k+1)!}
    (\p A_k+\b{\p}_{\varphi}A_{k+1})\right).
\end{aligned}
\end{equation}
Similarly, we get
$$
    \p_t e^{\iota_{\varphi}|\iota_{\b{\varphi}}}(\Omega)=e^{\iota_{\varphi}|\iota_{\b{\varphi}}}\circ(\1-\varphi\b{\varphi})^{-1}\Finv\left(\left(d(e^{\iota_{\varphi}|\iota_{\b{\varphi}}}(\Omega))\right)^{p+1,q}-\sum_{k=0}^{+\infty}\frac{\iota^{k+1}_{\b{\varphi}}}{(k+1)!}(\o{\p A_k+\b{\p}_{\varphi}A_{k+1}})\right),
$$
where $\o{\bullet}$ in the last term means that $\p, \b{\p}, \varphi,\b{\varphi}$ are replaced by $\b{\p},\p,\b{\varphi},\varphi$, respectively, while $\Omega$ is taken no conjugation.

  If (\ref{1.6}) holds, i.e., $\left(d(e^{\iota_{\varphi}|\iota_{\b{\varphi}}}(\Omega))\right)^{p+1,q}=0$ and $\left(d(e^{\iota_{\varphi}|\iota_{\b{\varphi}}}(\Omega))\right)^{p,q+1}=0$,
we will prove $$d(e^{\iota_{\varphi}|\iota_{\b{\varphi}}}(\Omega))=0$$ by induction on orders. Obviously, $$d(e^{\iota_{\varphi}|\iota_{\b{\varphi}}}(\Omega))_0=d\Omega_0=0.$$
Now we assume that for any $N_1\leq N$, $d(e^{\iota_{\varphi}|\iota_{\b{\varphi}}}(\Omega))_{N_1}=0$. By Lemma \ref{lemma1}, one has
$$ (\b{\p}_{\varphi}A_0)_{N_1}=0,\quad (\p A_k+\b{\p}_{\varphi}A_{k+1})_{N_1}=0,\quad k=0,1,2,\ldots$$
  for any $N_1\leq N$. For the $(N+1)$-th order, \eqref{dbt} and the induction assumption for any $N_1\leq N$ imply
\begin{align*}
 (\b{\p}_t e^{\iota_{\varphi}|\iota_{\b{\varphi}}}(\Omega))_{N+1}&=\left((e^{\iota_{\varphi}|\iota_{\b{\varphi}}}\circ(\1-\b{\varphi}\varphi)^{-1}\Finv)^{-1}\circ \b{\p}_t e^{\iota_{\varphi}|\iota_{\b{\varphi}}}(\Omega)\right)_{N+1}\\
    &=\left(d(e^{\iota_{\varphi}|\iota_{\b{\varphi}}}(\Omega))\right)^{p,q+1}_{N+1}-\left(\sum_{k=0}^{+\infty}\frac{\iota^{k+1}_{\varphi}}{(k+1)!}
    (\p A_k+\b{\p}_{\varphi}A_{k+1})\right)_{N+1}\\
    &=0.
\end{align*}
Similarly, we have $ (\p_t e^{\iota_{\varphi}|\iota_{\b{\varphi}}}(\Omega))_{N+1}=0$.
 So
  $$d(e^{\iota_{\varphi}|\iota_{\b{\varphi}}}(\Omega))_{N+1}= (\b{\p}_t e^{\iota_{\varphi}|\iota_{\b{\varphi}}}(\Omega))_{N+1}+(\p_t e^{\iota_{\varphi}|\iota_{\b{\varphi}}}(\Omega))_{N+1}=0.$$
  Thus, we complete the proof.
\end{proof}

\begin{remark}
  \emph{For $p=q=1$, if solving $d e^{\iota_{\varphi}|\iota_{\b{\varphi}}}(\Omega)=0$ for the orders $\leq N$, we proceed to the $(N+1)$-th order. One needs to prove
  $$\b{\p}(\varphi\lrcorner\Omega)_{N+1}=(\b{\p} e^{\iota_{\varphi}|\iota_{\b{\varphi}}}(\Omega))^{0,3}_{N+1}=(d e^{\iota_{\varphi}|\iota_{\b{\varphi}}}(\Omega))^{0,3}_{N+1}=0.$$ From (\ref{2.2.1}), we have
$$
    (d e^{\iota_{\varphi}|\iota_{\b{\varphi}}}(\Omega))^{0,3}_{N+1}=\left(\sum_{k=-1}^{+\infty}\frac{\iota^{k+2}_{\varphi}}{(k+2)!}(\p A_k+\b{\p}_{\varphi}A_{k+1})\right)_{N+1}=\left(\frac{\iota^3_\varphi}{3!}\circ\p(\b{\varphi}\lrcorner\Omega)\right)_{N+1}=0,
$$
  where the last equality follows from Lemma \ref{lemma1}.}
\end{remark}

Now we begin to solve (\ref{1.6}) with
two more lemmas. For the resolution of $\p\db$-equations, we need a lemma due to \cite[Theorem $4.1$]{P1} (or \cite[Lemma 3.14]{RwZ}):
\begin{lemma}\label{ddbar-eq}
Let $(X,\omega)$ be a compact Hermitian complex manifold with the
pure-type complex differential forms $x$ and $y$. Assume that the
$\p \db$-equation
\begin{equation}\label{ddb-eq}
\p \db x =y
\end{equation}
admits a solution. Then an explicit solution of the $\p
\db$-equation \eqref{ddb-eq} can be chosen as
$$(\p\db)^*\G_{BC}y,$$
which uniquely minimizes the $L^2$-norms of all the solutions with
respect to $\omega$. Besides, the equalities hold
\[\G_{BC}(\p\db) = (\p\db) \G_{A}\quad \text{and} \quad (\p\db)^*\G_{BC} = \G_{A}(\p\db)^*,\]
where $\G_{BC}$ and $\G_{A}$ are the associated Green's operators of
$\square_{BC}$ and $\square_{A}$, respectively. Here $\square_{BC}$
is defined in \eqref{bc-Lap} and $\square_{{A}}$ is the \emph{second
Kodaira-Spencer operator} (often also called \emph{Aeppli
Laplacian})
$$\square_{{A}}=\p^*\db^*\db\p+\db\p\p^*\db^*+\db\p^*\p\db^*+\p\db^*\db\p^*+\db\db^*+\p\p^*.$$
\end{lemma}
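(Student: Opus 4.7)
The plan is to apply Hodge theory for the fourth-order elliptic self-adjoint operator $\square_{BC}$ and exploit the very special algebraic structure of $\square_{BC}$ when restricted to $\p\db$-exact forms. First I would recall the standard orthogonal decomposition
\[ \alpha = H_{BC}(\alpha) + \square_{BC}\,\G_{BC}(\alpha), \qquad \alpha \in A^{p,q}(X), \]
where $H_{BC}$ is the projection onto the finite-dimensional harmonic space $\mathcal{H}^{p,q}_{BC} := \ker\square_{BC}$. Pairing $\langle \square_{BC} h, h\rangle$ for $h \in \ker\square_{BC}$ and using that each summand of $\square_{BC}$ is a composition of an operator with its adjoint, one sees that $\mathcal{H}^{p,q}_{BC}$ consists precisely of forms annihilated by $\p$, $\db$, and $(\p\db)^{*}$. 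In particular $\mathcal{H}^{p,q}_{BC} \perp \mathrm{im}(\p\db)$, so the hypothesis $y \in \mathrm{im}(\p\db)$ forces $H_{BC}(y) = 0$, whence $y = \square_{BC}\G_{BC}(y)$.

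Next I would promote this to the finer orthogonal decomposition
\[ A^{p,q}(X) = \mathcal{H}^{p,q}_{BC} \,\oplus\, \mathrm{im}(\p\db) \,\oplus\, \bigl(\mathrm{im}\,\p^{*} + \mathrm{im}\,\db^{*}\bigr), \]
whose three summands are mutually orthogonal by direct use of $\p^{2} = \db^{2} = 0$, and whose sum exhausts $A^{p,q}$ because a term-by-term inspection shows $\mathrm{im}(\square_{BC}) \subset \mathrm{im}(\p\db) + \mathrm{im}\,\p^{*} + \mathrm{im}\,\db^{*}$ while the reverse containment of $\mathcal{H}_{BC}^{\perp}$ comes from the Hodge decomposition. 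The crucial point is that $\square_{BC}$ preserves each of these three summands: applying $\square_{BC}$ to $\p\db\beta$, every term except $\p\db\db^{*}\p^{*}$ annihilates $\p\db\beta$ via $\p^{2} = \db^{2} = \p\db + \db\p = 0$; applying $\square_{BC}$ to $\p^{*}\gamma + \db^{*}\delta$, an analogous bookkeeping using $\p^{*}\p^{*} = \db^{*}\db^{*} = 0$ leaves everything inside $\mathrm{im}\,\p^{*} + \mathrm{im}\,\db^{*}$. Writing $\G_{BC}(y) = \alpha_{1} + \alpha_{2}$ along this decomposition with $\alpha_{1} \in \mathrm{im}(\p\db)$ and $\alpha_{2} \in \mathrm{im}\,\p^{*} + \mathrm{im}\,\db^{*}$, the equation $y = \square_{BC}\alpha_{1} + \square_{BC}\alpha_{2}$ now has its two summands in distinct components, and since $y \in \mathrm{im}(\p\db)$ we must have $\square_{BC}\alpha_{2} = 0$; together with $\alpha_{2} \perp \mathcal{H}^{p,q}_{BC}$ this yields $\alpha_{2} = 0$, so $\G_{BC}(y) = \p\db\gamma$ for some $\gamma$.

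The computational heart then collapses: in the expansion of $\square_{BC}(\p\db\gamma)$, every summand other than $\p\db\db^{*}\p^{*}$ vanishes identically on a $\p\db$-exact form by the calculation just described, so
\[ y = \square_{BC}(\p\db\gamma) = \p\db\db^{*}\p^{*}(\p\db\gamma) = \p\db(\p\db)^{*}\G_{BC}(y), \]
and $z := (\p\db)^{*}\G_{BC}(y)$ solves $\p\db z = y$. Because $z \in \mathrm{im}((\p\db)^{*}) = (\ker \p\db)^{\perp}$, any other solution $z'$ decomposes orthogonally as $z' = z + (z' - z)$ with $z' - z \in \ker(\p\db)$, so $\|z'\|^{2} = \|z\|^{2} + \|z' - z\|^{2} \geq \|z\|^{2}$ with equality iff $z' = z$; this gives the unique $L^{2}$-minimizer.

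For the commutation identities, I would directly verify the intertwinings $\square_{BC}\,\p\db = \p\db\,\square_{A}$ and its formal adjoint $\square_{A}\,(\p\db)^{*} = (\p\db)^{*}\,\square_{BC}$ by expanding both sides and cancelling via $\p^{2} = \db^{2} = 0$ and $\p\db = -\db\p$; then the standard Hodge-theoretic argument — restrict to the respective orthogonal complements of the harmonic subspaces, where $\square_{BC}$ and $\square_{A}$ are invertible with inverses $\G_{BC}$ and $\G_{A}$, and observe both sides kill the harmonic parts — produces $\G_{BC}(\p\db) = (\p\db)\G_{A}$ and $(\p\db)^{*}\G_{BC} = \G_{A}(\p\db)^{*}$. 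The main obstacle I anticipate is establishing that $\square_{BC}$ preserves the three-term decomposition, which requires a careful case-by-case check of the six summands of $\square_{BC}$; once past that hurdle, both the main equation and the commutation identities fall out cleanly from $\p^{2} = \db^{2} = 0$.
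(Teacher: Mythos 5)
Your proof is correct, and it is Hodge-theoretic in the same spirit as the paper's sketch, but it reorganizes the two main steps in genuinely different ways. For existence, the paper never shows that $\G_{BC}y$ is $\p\db$-exact; instead it proves the commutation $\square_{BC}\p\db(\p\db)^*=\p\db(\p\db)^*\square_{BC}$ (hence $\G_{BC}\p\db(\p\db)^*=\p\db(\p\db)^*\G_{BC}$) and computes $\p\db(\p\db)^*\G_{BC}y=\G_{BC}\p\db(\p\db)^*y=\G_{BC}\square_{BC}y=y$, using the collapse $\square_{BC}=\p\db(\p\db)^*$ on $\mathrm{Im}\,\p\db$ applied to $y$ itself; you instead establish that $\square_{BC}$ preserves each summand of the three-term Bott--Chern decomposition, deduce $\G_{BC}y\in\mathrm{Im}\,\p\db$, and then apply the same collapse to $\G_{BC}y$ --- slightly more bookkeeping, but the same engine. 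For minimality, your route is more elementary and arguably cleaner: $z=(\p\db)^*\G_{BC}y$ is orthogonal to $\ker(\p\db)$ by adjunction, and Pythagoras finishes it; the paper instead decomposes an arbitrary solution via the Aeppli--Hodge decomposition $A^{p,q}=\ker\square_A\oplus(\mathrm{Im}\,\p+\mathrm{Im}\,\db)\oplus\mathrm{Im}\,(\p\db)^*$ and identifies the third component as $\G_A(\p\db)^*y=(\p\db)^*\G_{BC}y$, which has the advantage of delivering the second commutation identity of the lemma as a by-product. Conversely, you prove the commutation identities by verifying the intertwining $\square_{BC}\p\db=\p\db\,\square_A$ (and its adjoint) and then inverting the Laplacians off the harmonic spaces --- a step the paper's sketch leaves implicit --- so on that point your write-up is actually more complete. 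One cosmetic remark: the equality $\mathrm{Im}\,(\p\db)^*=(\ker\p\db)^{\perp}$ is not literally true on smooth forms without taking $L^2$-closures, but you only use the inclusion $\mathrm{Im}\,(\p\db)^*\subseteq(\ker\p\db)^{\perp}$, which is immediate, so this does not affect the argument.
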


\begin{proof}[Sketch of Proof for Lemma \ref{ddbar-eq}]
We shall use the Hodge decomposition of $\square_{BC}$ on $X$:
$$\label{bc-hod}
A^{p,q}(X)=\ker\square_{BC}\oplus \textrm{Im}~(\p\db)
\oplus(\textrm{Im}~\p^*+\textrm{Im}~\db^*),
$$
whose three parts are orthogonal to each other with respect to the
$L^2$-scalar product defined by $\omega$, combined with the equality
$$\1=\mathbb{H}_{BC}+\square_{BC}\G_{BC},$$
where $\mathbb{H}_{BC}$ is the harmonic projection operator.
Then two observations follow:
\begin{enumerate}[(1)]
    \item \label{case-1}
$\square_{BC}\p\db(\p\db)^*=\p\db(\p\db)^*\square_{BC};$
    \item \label{case-2}
$\G_{BC}\p\db(\p\db)^*=\p\db(\p\db)^*\G_{BC}.$
 \end{enumerate}
It is clear that \eqref{case-1} implies \eqref{case-2},
while the statement \eqref{case-1} is proved by a direct
calculation:
$$\square_{BC}\p\db(\p\db)^*=(\p\db)(\p\db)^*(\p\db)(\p\db)^*=\p\db(\p\db)^*\square_{BC}.$$
Hence, one has
$$(\p\db)(\p\db)^*\G_{BC}y=\G_{BC}(\p\db)(\p\db)^*y=\G_{BC}\square_{BC}y=(\1-\mathbb{H}_{BC})y=y,$$
where $y \in \textrm{Im}~\p \db$ due to the solution-existence of
the $\p\db$-equation.

To see that the solution $(\p\db)^*\G_{BC}y$ is the unique
$L^2$-norm minimum, we resort to the Hodge decomposition of the
operator $\square_{A}$:
\begin{equation}\label{A-H}
A^{p,q}(X)=\ker\square_{A} \oplus (Im~\p +Im~\db ) \oplus
Im~(\p\db)^*,
\end{equation}
where $\ker \square_{A} = \ker (\p \db) \cap \ker \p^* \cap \ker
\db^*$. Let $z$ be an arbitrary solution of the $\p\db$-equation
\eqref{ddb-eq}, which decomposes into three components $z_1 + z_2
+z_3$ with respect to the Hodge decomposition \eqref{A-H} of
$\square_{A}$.
And one is able to
obtain that
\[ z_3 =  \mathbb{G}_{A} (\p\db)^* y = (\p\db)^*\mathbb{G}_{BC} y. \]
Therefore, \[ \|z\|^2 = \|z_1\|^2 +\|z_2\|^2+\|z_3\|^2\geq \|z_3\|^2
= \|(\p\db)^*\mathbb{G}_{BC}y \|^2, \] and the equality holds if and
only if $z_1 =z_2 =0$, i.e., $z=z_3=(\p\db)^*\mathbb{G}_{BC} y$.
\end{proof}

\blemma\label{slvdb-1} Let $X$ be a complex manifold satisfying the $(p,q+1)$- and $(q,p+1)$-th mild $\p\db$-lemmata. Consider the system of equations:
\beq\label{conjugatequ-1}
\begin{cases}
\p x =\db \zeta, \\
\db x =\p \overline{\xi},
\end{cases}\eeq
where $\zeta,\xi$ are $(p+1,q-1)$- and $(q+1,p-1)$-forms on $X$, respectively. The system of
equations \eqref{conjugatequ-1} has a solution if and only if
$$
\begin{cases}
\p\db \zeta=0,\\
\db\p \overline{\xi}=0.
\end{cases}
$$
\elemma

\bproof This lemma is inspired by \cite[Observation 2.11]{RwZ}. The lemmata assumption will produce $\mu\in A^{p,q-1}$ and $\nu\in A^{p-1,q}$, satisfying the system of
equations $$\label{mslmm} \begin{cases}
\p \db \mu = \db \zeta, \\
\db \p \nu = \p \b\xi. \\
\end{cases}  $$
The combined expression
$$\label{cbslt} \db \mu + \p \nu $$
is
our choice for the solution of the system \eqref{conjugatequ-1}.
\eproof

By Lemmata \ref{ddbar-eq} and \ref{slvdb-1}, one has:
\begin{proposition}\label{formal-s}
With the same notations as in Lemmata \ref{ddbar-eq} and \ref{slvdb-1}, the system of
equations \eqref{conjugatequ-1} has a canonical solution
$$
  x=\db(\p\db)^*\G_{BC}\db\zeta-\p(\p\db)^*\G_{BC}\p\b\xi.
$$
\end{proposition}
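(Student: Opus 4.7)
The plan is to specialise the existence argument of Lemma \ref{slvdb-1} by choosing the canonical $\partial\bar\partial$-primitives supplied by Lemma \ref{ddbar-eq}. Recall that the proof of Lemma \ref{slvdb-1} produces the solution in the combined form $x=\bar\partial\mu+\partial\nu$, where $\mu\in A^{p,q-1}(X)$ and $\nu\in A^{p-1,q}(X)$ are any forms satisfying
\[
\partial\bar\partial\mu=\bar\partial\zeta,\qquad \bar\partial\partial\nu=\partial\bar\xi,
\]
the latter being rewritable as $\partial\bar\partial\nu=-\partial\bar\xi$ since $\bar\partial\partial=-\partial\bar\partial$. So the task reduces to exhibiting canonical $L^2$-minimum solutions of these two $\partial\bar\partial$-equations and then combining them.

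First I would verify that each of the two $\partial\bar\partial$-equations above is indeed solvable, so that Lemma \ref{ddbar-eq} applies. The solvability of the first is already built into the proof of Lemma \ref{slvdb-1}: the assumption $\partial\bar\partial\zeta=0$ together with the $(q,p+1)$-th mild $\partial\bar\partial$-lemma (applied to $\bar\zeta\in A^{q-1,p+1}(X)$, whose $\partial\bar\partial$-closedness is the conjugate of $\partial\bar\partial\zeta=0$) produces a $\theta\in A^{q-1,p}(X)$ with $\partial\bar\partial\theta=\partial\bar\zeta$; conjugating yields a solution $\mu$ of $\partial\bar\partial\mu=\bar\partial\zeta$. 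The second equation is handled symmetrically, using the $(p,q+1)$-th mild $\partial\bar\partial$-lemma applied to $\bar\xi\in A^{p-1,q+1}(X)$, whose $\partial\bar\partial$-closedness follows from $\bar\partial\partial\bar\xi=0$.

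Once solvability is ensured, Lemma \ref{ddbar-eq} gives the canonical $L^2$-minimum solutions
\[
\mu=(\partial\bar\partial)^*\mathbb{G}_{BC}\bar\partial\zeta,\qquad \nu=-(\partial\bar\partial)^*\mathbb{G}_{BC}\partial\bar\xi,
\]
and substitution into $x=\bar\partial\mu+\partial\nu$ yields exactly the claimed formula. The verification that this $x$ really solves \eqref{conjugatequ-1} is then immediate: applying $\partial$ kills the second summand and produces $\partial\bar\partial(\partial\bar\partial)^*\mathbb{G}_{BC}\bar\partial\zeta=\bar\partial\zeta$ from the first, by the identity $\partial\bar\partial(\partial\bar\partial)^*\mathbb{G}_{BC}y=y$ for $y\in\operatorname{Im}(\partial\bar\partial)$ established in the proof of Lemma \ref{ddbar-eq}; the check of the second equation is entirely analogous. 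The only real subtlety, and the place where one must be attentive rather than ingenious, is keeping track of the sign that arises from $\bar\partial\partial=-\partial\bar\partial$ when converting the second equation in \eqref{conjugatequ-1} into $\partial\bar\partial$-form; this is precisely what produces the minus sign in front of the second term of the canonical solution.
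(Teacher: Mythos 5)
Your proposal is correct and follows exactly the route the paper intends: the paper gives no written proof beyond ``By Lemmata \ref{ddbar-eq} and \ref{slvdb-1}, one has,'' and your argument is precisely that combination, taking $x=\db\mu+\p\nu$ from the proof of Lemma \ref{slvdb-1} and substituting the canonical $L^2$-minimal primitives $\mu=(\p\db)^*\G_{BC}\db\zeta$ and $\nu=-(\p\db)^*\G_{BC}\p\b\xi$ from Lemma \ref{ddbar-eq}, with the sign from $\db\p=-\p\db$ correctly tracked. Your verification of solvability of the two $\p\db$-equations via the $(q,p+1)$- and $(p,q+1)$-th mild $\p\db$-lemmata (with conjugation where needed) is a detail the paper leaves implicit, and you handle it correctly.
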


Now we assume that $X_0$ is a complex manifold satisfies $(q,p+1)$-th and $(p,q+1)$-th mild $\p\b{\p}$-lemmata. The obstruction (\ref{1.6}) can be rewritten as
  \begin{equation}\label{1.7}
    \begin{cases}
      \p\tilde{\Omega}+\sum_{k=1}^{\infty}\left(\b{\p}\circ\frac{\iota^{k-1}_{\varphi}}{(k-1)!}+\p\circ\frac{\iota^{k}_{\varphi}}{k!}\right)\circ\frac{\iota^{k}_{(\1-\b{\varphi}\varphi)^{-1}\b{\varphi}}}{k!}(\tilde{\Omega})=0,\\
      \b{\p}\tilde{\Omega}+\p\circ\iota_{\varphi}(\tilde{\Omega})+\sum_{k=2}^{\infty}\left(\b{\p}\circ\frac{\iota^{k-1}_{\varphi}}{(k-1)!}+\p\circ\frac{\iota^{k}_{\varphi}}{k!}\right)\circ\frac{\iota^{k-1}_{(\1-\b{\varphi}\varphi)^{-1}\b{\varphi}}}{(k-1)!}(\tilde{\Omega})=0.
    \end{cases}
  \end{equation}
Set
$$\tilde{\Omega}':=\tilde{\Omega}+\sum_{k=1}^{\infty}\frac{\iota^{k}_{\varphi}}{k!}\circ\frac{\iota^{k}_{(\1-\b{\varphi}\varphi)^{-1}\b{\varphi}}}{k!}(\tilde{\Omega}).$$
Then (\ref{1.7}) becomes
$$
  \begin{cases}
    \p\tilde{\Omega}'=-\b{\p}\sum_{k=1}^{\infty}\frac{\iota^{k-1}_{\varphi}}{(k-1)!}\circ\frac{\iota^{k}_{(\1-\b{\varphi}\varphi)^{-1}\b{\varphi}}}{k!}(\tilde{\Omega}),\\
    \b{\p}\tilde{\Omega}'=-\p \sum_{k=0}^{\infty} \frac{\iota^{k+1}_{\varphi}}{(k+1)!}\circ\frac{\iota^{k}_{(\1-\b{\varphi}\varphi)^{-1}\b{\varphi}}}{k!}(\tilde{\Omega}).
  \end{cases}
$$
In order to use $(q,p+1)$- and $(p,q+1)$-th mild $\p\b{\p}$-lemmata, we need to prove
\begin{align}\label{2.1}
\begin{cases}
\p\b{\p}\sum_{k=1}^{\infty}\frac{\iota^{k-1}_{\varphi}}{(k-1)!}\circ\frac{\iota^{k}_{(\1-\b{\varphi}\varphi)^{-1}\b{\varphi}}}{k!}(\tilde{\Omega})
=0,\\
\p\b{\p}\sum_{k=0}^{\infty} \frac{\iota^{k+1}_{\varphi}}{(k+1)!}\circ\frac{\iota^{k}_{(\1-\b{\varphi}\varphi)^{-1}\b{\varphi}}}{k!}(\tilde{\Omega})=0,
\end{cases}
\end{align}
at the $(N+1)$-th order if it has been solved for the orders $\leq N$.

Now we prove (\ref{2.1}). Firstly, note that
$$
  \sum_{k=0}^{\infty} \frac{\iota^{k+1}_{\varphi}}{(k+1)!}\circ\frac{\iota^{k}_{(\1-\b{\varphi}\varphi)^{-1}\b{\varphi}}}{k!}(\tilde{\Omega})=\left(e^{\iota_{\varphi}}\circ e^{\iota_{(\1-\b{\varphi}\varphi)^{-1}\b{\varphi}}}(\tilde{\Omega})\right)^{p-1,q+1}=(e^{\iota_{\varphi}|\iota_{\b{\varphi}}}(\Omega))^{p-1,q+1}.
$$
Thus,
$$
\begin{aligned}
\left(\p\b{\p}\sum_{k=0}^{\infty} \frac{\iota^{k+1}_{\varphi}}{(k+1)!}\circ \frac{\iota^{k}_{(\1-\b{\varphi}\varphi)^{-1}\b{\varphi}}}{k!}(\tilde{\Omega})\right)_{N+1}
&=\p\b{\p}\left((e^{\iota_{\varphi}|\iota_{\b{\varphi}}}(\Omega))^{p-1,q+1}\right)_{N+1}\\
 &=\left(\p\b{\p}e^{\iota_{\varphi}|\iota_{\b{\varphi}}}(\Omega)\right)_{N+1}^{p,q+2}\\
 &=\left(\p d e^{\iota_{\varphi}|\iota_{\b{\varphi}}}(\Omega)\right)_{N+1}^{p,q+2}\\
 &=\p\left(d e^{\iota_{\varphi}|\iota_{\b{\varphi}}}(\Omega)\right)_{N+1}^{p-1,q+2}
\end{aligned}
$$
since the obstruction equation is solved for the orders $\leq N$, i.e., $d(e^{\iota_{\varphi}|\iota_{\b{\varphi}}}(\Omega))_{N_1}=0$ for any $N_1\leq N$.
 By Lemma \ref{lemma1}, one has
$$ (\b{\p}_{\varphi}A_0)_{N_1}=0,\quad (\p A_k+\b{\p}_{\varphi}A_{k+1})_{N_1}=0,\quad k=0,1,2,\ldots$$
  for any $N_1\leq N$. It follows from (\ref{2.2.1}) that
  $$\left(d e^{\iota_{\varphi}|\iota_{\b{\varphi}}}(\Omega)\right)_{N+1}^{p-1,q+2}=\left(\sum_{k=-1}^{+\infty}\frac{\iota^{k+2}_{\varphi}}{(k+2)!}(\p A_k+\b{\p}_{\varphi}A_{k+1})\right)_{N+1}=0.$$
  So
  $$\left(\p\b{\p}\sum_{k=0}^{\infty} \frac{\iota^{k+1}_{\varphi}}{(k+1)!}\circ \frac{\iota^{k}_{(\1-\b{\varphi}\varphi)^{-1}\b{\varphi}}}{k!}(\tilde{\Omega})\right)_{N+1}=\p\left(d e^{\iota_{\varphi}|\iota_{\b{\varphi}}}(\Omega)\right)_{N+1}^{p-1,q+2}=0.$$
Hence, we have proved the second equation of (\ref{2.1}). Similarly, by the same argument (i.e., replace all $\varphi$ (resp., $\b{\varphi}$) by $\b{\varphi}$ (resp., $\varphi$)),  the first equation of (\ref{2.1}) also holds.

  By Proposition \ref{formal-s}, one obtains a formal solution of \eqref{1.7} by induction
\begin{equation}\label{express-tO}
\begin{aligned}
\tilde{\Omega}_l&=-\Big(\sum_{k=1}^{\infty}\frac{\iota^{k}_{\varphi}}{k!}\circ\frac{\iota^{k}_{(\1-\b{\varphi}\varphi)^{-1}\b{\varphi}}}{k!}(\tilde{\Omega})\Big)_l
-\db(\p\db)^*\G_{BC}\db\Big(\sum_{k=1}^{\infty}\frac{\iota^{k-1}_{\varphi}}{(k-1)!}\circ\frac{\iota^{k}_{(\1-\b{\varphi}\varphi)^{-1}\b{\varphi}}}{k!}(\tilde{\Omega})\Big)_l\\
 &\quad +\p(\p\db)^*\G_{BC}\p\Big(\sum_{k=0}^{\infty} \frac{\iota^{k+1}_{\varphi}}{(k+1)!}\circ\frac{\iota^{k}_{(\1-\b{\varphi}\varphi)^{-1}\b{\varphi}}}{k!}(\tilde{\Omega})\Big)_l\\
 &=-\Big(\sum_{k=1}^{\min\{q,n-p\}}\frac{\iota^{k}_{\varphi}}{k!}\circ\frac{\iota^{k}_{(\1-\b{\varphi}\varphi)^{-1}\b{\varphi}}}{k!}(\tilde{\Omega})\Big)_l
-\db(\p\db)^*\G_{BC}\db\Big(\sum_{k=1}^{\min\{q,n-p\}}\frac{\iota^{k-1}_{\varphi}}{(k-1)!}\circ\frac{\iota^{k}_{(\1-\b{\varphi}\varphi)^{-1}\b{\varphi}}}{k!}(\tilde{\Omega})\Big)_l\\
 &\quad +\p(\p\db)^*\G_{BC}\p\Big(\sum_{k=0}^{\min\{q,n-p\}} \frac{\iota^{k+1}_{\varphi}}{(k+1)!}\circ\frac{\iota^{k}_{(\1-\b{\varphi}\varphi)^{-1}\b{\varphi}}}{k!}(\tilde{\Omega})\Big)_l.
\end{aligned}
\end{equation}

\begin{remark}\rm
One is possibly able to obtain this (formal) solution \eqref{express-tO} of \eqref{1.7} backwards by the  invertibility of some operator in small $t$ as shown in \cite[Remark 4.6]{lrw}, but it seems that
to figure out this solution explicitly by power series method is indispensable in this process.
\end{remark}

\subsection{Regularity argument}\label{regularity} Here we adopt a strategy for convergence argument \cite{L} suggested by K. Liu, which simplifies
our argument involved in \cite{RwZ,RZ15}.

From the induction expression \eqref{express-tO}, one obtains the formal expression of $\tilde{\Omega}$
\begin{equation}\label{omexp}
\begin{aligned}
\tilde{\Omega}&=-\sum_{i=1}^{\min\{q,n-p\}}\frac{\iota^{i}_{\varphi}}{i!}\circ\frac{\iota^{i}_{(\1-\b{\varphi}\varphi)^{-1}\b{\varphi}}}{i!}(\tilde{\Omega})
-\db(\p\db)^*\G_{BC}\db\sum_{i=1}^{\min\{q,n-p\}}\frac{\iota^{i-1}_{\varphi}}{(i-1)!}\circ\frac{\iota^{i}_{(\1-\b{\varphi}\varphi)^{-1}\b{\varphi}}}{i!}(\tilde{\Omega})\\
 &\quad +\p(\p\db)^*\G_{BC}\p\sum_{i=0}^{\min\{q,n-p\}} \frac{\iota^{i+1}_{\varphi}}{(i+1)!}\circ\frac{\iota^{i}_{(\1-\b{\varphi}\varphi)^{-1}\b{\varphi}}}{i!}(\tilde{\Omega})
 +\Om_0.
\end{aligned}
\end{equation}
Set
$$\begin{aligned}
F=\ &\p(\p\db)^*\G_{BC}\p\sum_{i=0}^{\min\{q,n-p\}} \frac{\iota^{i+1}_{\varphi}}{(i+1)!}\circ\frac{\iota^{i}_{(\1-\b{\varphi}\varphi)^{-1}\b{\varphi}}}{i!}\\
\ &-\sum_{i=1}^{\min\{q,n-p\}}\frac{\iota^{i}_{\varphi}}{i!}\circ\frac{\iota^{i}_{(\1-\b{\varphi}\varphi)^{-1}\b{\varphi}}}{i!}\\
\ &-\db(\p\db)^*\G_{BC}\db\sum_{i=1}^{\min\{q,n-p\}}\frac{\iota^{i-1}_{\varphi}}{(i-1)!}\circ\frac{\iota^{i}_{(\1-\b{\varphi}\varphi)^{-1}\b{\varphi}}}{i!}
\end{aligned}
$$
and write
\begin{equation}\label{global-op}
  \Om_0=(\1-F)\tilde{\Omega}.
\end{equation}
We claim that $\tilde{\Omega}(t)$ converges in H\"older norm as $t\rightarrow 0$ by use of the following two a priori
elliptic estimates: for any complex differential form $\phi$,
$$\label{ee1}
\|\overline{\partial}^*\phi\|_{k-1, \alpha}\leq C_1\|\phi\|_{k,
\alpha}
$$
and
$$\label{ee2}
\|\G_{BC}\phi\|_{k, \alpha}\leq C_{k,\alpha}\|\phi\|_{k-4, \alpha},
$$
where $k>3$ and $C_{k,\alpha}$ depends on only on $k$ and $\alpha$,
not on $\phi$ (cf. \cite[Appendix.Theorem $7.4$]{k} for example).
And one notes that $\varphi(t)$ converges smoothly to zero as $t\rightarrow 0$.
Thus, by \eqref{global-op}, one estimates
$$
\| \Om_0\|_{k, \alpha}\geq
(1-\epsilon_{k,\alpha})
\|\tilde{\Omega} \|_{k, \alpha},$$
where $0<\epsilon_{k,\alpha}\ll 1$ is some constant depending on $k,\alpha$.

Finally, we proceed to the regularity of $\tilde{\Omega}(t)$ since there is possibly no uniform
lower bound for the convergence radius obtained as above in the $C^{k,\alpha}$-norm when $k$ converges to $+\infty$. This argument lies heavily in the elliptic estimates
\cite[Appendix.\S 8]{k}, \cite{dn} and also \cite[Subsection 3.2]{RwZ}.

Without loss of generality, we just consider the equation:
$$\label{reg-eqn-1}
\begin{aligned}
\square\tilde{\Omega}&=-\square\sum_{k=1}^{\min\{q,n-p\}}\frac{\iota^{k}_{\varphi}}{k!}\circ\frac{\iota^{k}_{(\1-\b{\varphi}\varphi)^{-1}\b{\varphi}}}{k!}(\tilde{\Omega})
-\db \db^*\db(\p\db)^*\G_{BC}\db\sum_{k=1}^{\min\{q,n-p\}}\frac{\iota^{k-1}_{\varphi}}{(k-1)!}\circ\frac{\iota^{k}_{(\1-\b{\varphi}\varphi)^{-1}\b{\varphi}}}{k!}(\tilde{\Omega})\\
 &\quad +\square\p(\p\db)^*\G_{BC}\p\sum_{k=0}^{\min\{q,n-p\}} \frac{\iota^{k+1}_{\varphi}}{(k+1)!}\circ\frac{\iota^{k}_{(\1-\b{\varphi}\varphi)^{-1}\b{\varphi}}}{k!}(\tilde{\Omega})
\end{aligned}
$$
by applying the $\db$-Laplacian
$\square=\db^*\db+\db\db^*$ to the expression formula \eqref{omexp} and omitting the lower-order term $\square\Om_0$ in this expression.
By replacing the roles of
$$\iota_{\varphi}\circ
\iota_{(\1-\b{\varphi}\varphi)^{-1}\b{\varphi}},\ \iota_{(\1-\b{\varphi}\varphi)^{-1}\b{\varphi}},\ \iota_{\varphi}+
\frac{1}{2} \iota_{\varphi}\circ \iota_{\varphi}\circ
\iota_{(\1-\b{\varphi}\varphi)^{-1}\b{\varphi}}$$
in
the analogous strongly elliptic second-order pseudo-differential equation in the regularity argument of \cite[Subsection 3.2]{RwZ}
$$
\label{reg-eqn-2}
\begin{aligned}
\square\tilde{\Omega}(t)
 =&-\square\Big( \iota_{\varphi}\circ
\iota_{(\1-\b{\varphi}\varphi)^{-1}\b{\varphi}} \big(
\tilde{\Omega}(t)\big) \Big) -\db \db^*\db(\p\db)^* \G_{BC} \db
\Big( \iota_{(\1-\b{\varphi}\varphi)^{-1}\b{\varphi}} \big(
\tilde{\Omega}(t) \big) \Big)\\
& + \square\p (\p\db)^* \G_{BC} \p \Big( \big( \iota_{\varphi}+
\frac{1}{2} \iota_{\varphi}\circ \iota_{\varphi}\circ
\iota_{(\1-\b{\varphi}\varphi)^{-1}\b{\varphi}}\big)\tilde{\Omega}(t)
\Big),
\end{aligned}
$$
by
$$\sum_{i=1}^{\min\{q,n-p\}}\frac{\iota^{i}_{\varphi}}{i!}\circ\frac{\iota^{i}_{(\1-\b{\varphi}\varphi)^{-1}\b{\varphi}}}{i!},\
\sum_{i=1}^{\min\{q,n-p\}}\frac{\iota^{i-1}_{\varphi}}{(i-1)!}\circ\frac{\iota^{i}_{(\1-\b{\varphi}\varphi)^{-1}\b{\varphi}}}{i!},\
\sum_{i=0}^{\min\{q,n-p\}} \frac{\iota^{i+1}_{\varphi}}{(i+1)!}\circ\frac{\iota^{i}_{(\1-\b{\varphi}\varphi)^{-1}\b{\varphi}}}{i!},$$
respectively,
one proves the following result. For each $l=1,2,\cdots$, choose a
smooth function $\eta^l(t)$ with values in $[0,1]$:
$$\label{eta-l}
\eta^l(t)\equiv
    \begin{cases}
      1,\ \text{for $|t|\leq (\frac{1}{2}+\frac{1}{2^{l+1}})r$};\\
      0,\ \text{for $|t|\geq (\frac{1}{2}+\frac{1}{2^{l}})r$},
    \end{cases}
$$
where $r$ is a positive constant to be determined.
Inductively, by Douglis-Nirenberg's
interior estimates \cite[Appendix.Theorem 2.3]{k}, \cite{dn}, for any $l=1,2,\cdots$,
$\eta^{2l+1}\tilde{\Omega}(t)$ is $C^{k+l, \alpha}$, where
 $r$ can be chosen independent of $l$. Since $\eta^{2l+1}(t)$ is
identically equal to $1$ on $|t|<\frac{r}{2}$ which is independent
of $l$, $\tilde{\Omega}(t)$ is $C^{\infty}$ on $X_0$ with
$|t|<\frac{r}{2}$.  Then $\tilde{\Omega}(t)$ can be considered as a real analytic family of $(p,q)$-forms in $t$
 and thus is smooth on $t$.

\section{Deformation invariance of Bott-Chern numbers}\label{bcinv}
The main goal of this section is to study deformation invariance of Bott-Chern numbers on complex manifolds:
\begin{theorem}\label{inv-pq}
If the reference fiber $X_0$ satisfies the $(p,q+1)$- and $(q,p+1)$-th mild $\p\bar{\p}$-lemmata and the deformation invariance of the $(p-1,q-1)$-Aeppli number $h^{p-1,q-1}_{A}(X_t)$ holds,
then $h^{p,q}_{BC}(X_t)$ are deformation invariant.
\end{theorem}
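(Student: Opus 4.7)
The plan is to combine upper semi-continuity of Bott--Chern numbers with a matching lower bound obtained from the $d$-closed extension in Theorem \ref{0pq-sm-ext}, where the Aeppli invariance enters through the continuous dependence of the Aeppli Green's operator on $t$.

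First, since the Bott--Chern Laplacian $\square_{BC,t}$ varies smoothly with $t$ and is fourth-order elliptic, Kodaira--Spencer's upper semi-continuity for smoothly varying elliptic operators yields $h^{p,q}_{BC}(X_t)\leq h^{p,q}_{BC}(X_0)$ for $t$ small; it therefore suffices to show the reverse inequality. Let $k:=h^{p,q}_{BC}(X_0)$ and pick $d$-closed $(p,q)$-forms $\Omega_0^{(1)},\ldots,\Omega_0^{(k)}$ on $X_0$ whose Bott--Chern classes form a basis of $H^{p,q}_{BC}(X_0)$. Applying Theorem \ref{0pq-sm-ext} to each $\Omega_0^{(i)}$ gives $d$-closed $(p,q)$-forms $\Omega^{(i)}(t)$ on $X_t$ depending smoothly on $t$ with $\Omega^{(i)}(0)=\Omega_0^{(i)}$. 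This defines a linear map $\Phi_t\colon \mathbb{C}^k\to H^{p,q}_{BC}(X_t)$ sending $(c_1,\ldots,c_k)$ to $\bigl[\sum_i c_i\Omega^{(i)}(t)\bigr]_{BC,t}$; showing $\Phi_t$ is injective for small $t$ will finish the proof.

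Assume for contradiction that injectivity fails in every neighborhood of $0$. Then there exist $t_n\to 0$ and unit vectors $(c_1^{(n)},\ldots,c_k^{(n)})\in\mathbb{C}^k$ with $\sum_i c_i^{(n)}[\Omega^{(i)}(t_n)]_{BC,t_n}=0$; after a subsequence, $c_i^{(n)}\to c_i$ with $\sum_i|c_i|^2=1$. Since $\sum_i c_i^{(n)}\Omega^{(i)}(t_n)$ is $d$-closed and $\p_{t_n}\db_{t_n}$-exact, Lemma \ref{ddbar-eq} produces the canonical solution
$$
\mu^{(n)}:=(\p_{t_n}\db_{t_n})^{\!*}\mathbb{G}_{BC,t_n}\bigl(\textstyle\sum_i c_i^{(n)}\Omega^{(i)}(t_n)\bigr)=\mathbb{G}_{A,t_n}(\p_{t_n}\db_{t_n})^{\!*}\bigl(\textstyle\sum_i c_i^{(n)}\Omega^{(i)}(t_n)\bigr),
$$
satisfying $\p_{t_n}\db_{t_n}\mu^{(n)}=\sum_i c_i^{(n)}\Omega^{(i)}(t_n)$. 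The hypothesis that $h^{p-1,q-1}_{A}(X_t)$ is locally constant guarantees, via Kodaira--Spencer's principle that Green's operators of smoothly varying elliptic families depend continuously on the parameter whenever the dimensions of their kernels are locally constant, that $\mathbb{G}_{A,t}$ on $(p-1,q-1)$-forms depends continuously on $t$. Combined with the smooth $t$-dependence of $\p_t,\db_t$, their metric adjoints and the $\Omega^{(i)}(t)$, we conclude that $\mu^{(n)}$ converges smoothly to $\mu_0:=\mathbb{G}_{A}(\p\db)^{\!*}\bigl(\sum_i c_i\Omega_0^{(i)}\bigr)$. Passing to the limit in $\p_{t_n}\db_{t_n}\mu^{(n)}=\sum_i c_i^{(n)}\Omega^{(i)}(t_n)$ yields $\p\db\mu_0=\sum_i c_i\Omega_0^{(i)}$, so $\sum_i c_i[\Omega_0^{(i)}]_{BC,0}=0$, contradicting $(c_1,\ldots,c_k)\neq 0$. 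Hence $\Phi_t$ is injective and $h^{p,q}_{BC}(X_t)\geq k$, completing the proof.

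The main obstacle will be the rigorous justification of the continuity $\mathbb{G}_{A,t_n}\to\mathbb{G}_{A}$ from the invariance of $h^{p-1,q-1}_{A}(X_t)$ and the ensuing smooth convergence of $\mu^{(n)}$; this rests on the classical analytic machinery of Kodaira--Spencer applied to the fourth-order elliptic Aeppli Laplacian $\square_{A}$, realized on the common differentiable manifold underlying the family, and parallels the strategy used for Hodge number invariance in \cite{RZ15}.
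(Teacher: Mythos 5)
Your proof is correct and follows essentially the same route as the paper: Kodaira--Spencer upper semi-continuity plus an injective extension map built from Theorem \ref{0pq-sm-ext}, with injectivity forced by the canonical solution identity $(\p\db)^*\G_{BC}=\G_{A}(\p\db)^*$ of Lemma \ref{ddbar-eq} and the continuous dependence of $\G_{A,t}$ on $t$ supplied by the invariance of $h^{p-1,q-1}_{A}(X_t)$. Your sequential-compactness formulation of the injectivity step is just a slightly more careful rendering of the paper's Proposition \ref{pq-pq-1}, which passes to the limit $t\to 0$ in the same Green's-operator identity.
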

As a direct corollary, one obtains:
\begin{corollary}\label{inv-p0-bc}
If the reference fiber $X_0$ satisfies the $(p,1)$-th mild $\p\bar{\p}$-lemma,
then $h^{p,0}_{BC}(X_t)$  and $h^{0,p}_{BC}(X_t)$ are deformation invariant.
\end{corollary}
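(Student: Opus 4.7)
The plan is to obtain Corollary \ref{inv-p0-bc} as a direct specialization of Theorem \ref{inv-pq} to the boundary case $q=0$, and then to obtain the statement for $h^{0,p}_{BC}(X_t)$ by complex-conjugation symmetry of Bott-Chern cohomology.

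First, I would apply Theorem \ref{inv-pq} with the substitution $q=0$. The hypotheses there are three: the $(p,q+1)$-th mild $\p\bar\partial$-lemma on $X_0$, the $(q,p+1)$-th mild $\p\bar\partial$-lemma on $X_0$, and the deformation invariance of the $(p-1,q-1)$-Aeppli number. With $q=0$, the first becomes precisely the $(p,1)$-th mild $\p\bar\partial$-lemma, which is exactly what is assumed in the corollary. The second becomes the $(0,p+1)$-th mild $\p\bar\partial$-lemma; by Definition \ref{mildddbar} this concerns $(-1,p+1)$-forms, and since there are no forms of negative bidegree, the statement is vacuously true. The third requires the invariance of $h^{p-1,-1}_A(X_t)$; but $A^{p-1,-1}(X_t)=0$, so this Aeppli number is identically zero and trivially invariant. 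Consequently all three hypotheses of Theorem \ref{inv-pq} are satisfied, yielding the deformation invariance of $h^{p,0}_{BC}(X_t)$.

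Next, for $h^{0,p}_{BC}(X_t)$, I would invoke the conjugation symmetry of Bott-Chern cohomology: the map $[\alpha]\mapsto [\overline{\alpha}]$ is a real-linear isomorphism between $H^{p,0}_{BC}(X_t)$ and $H^{0,p}_{BC}(X_t)$, so that $h^{p,0}_{BC}(X_t)=h^{0,p}_{BC}(X_t)$ for every $t$. The invariance of the right-hand side therefore follows from that of the left-hand side. Alternatively, one could apply Theorem \ref{inv-pq} directly with $p$ and $q$ swapped (that is, $(p,q)\rightsquigarrow(0,p)$); the hypothesis $(0,p+1)$-th mild $\p\bar\partial$-lemma is again vacuous, the other nontrivial hypothesis reduces again to the $(p,1)$-th mild $\p\bar\partial$-lemma (now playing the role of the $(q,p+1)$-th lemma), and the required Aeppli invariance of $h^{-1,p-1}_A(X_t)$ is trivial.

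The main ``obstacle'' is really a bookkeeping one: one must confirm that the conventions adopted in Definition \ref{mildddbar} and in the statement of Theorem \ref{inv-pq} permit the boundary-case substitution $q=0$ (respectively $p=0$) and that the formally negative-bidegree spaces are interpreted as zero. Once this is settled, the corollary is immediate, and no further analytic input beyond Theorem \ref{inv-pq} and the $\mathbb{R}$-structure on Bott-Chern cohomology is needed.
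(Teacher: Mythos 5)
Your proof is correct and is exactly the route the paper intends: the corollary is stated there as a direct specialization of Theorem \ref{inv-pq} to $q=0$, with the $(0,p+1)$-th mild $\p\bar\partial$-lemma and the invariance of $h^{p-1,-1}_{A}$ holding trivially, and the $(0,p)$ case following by conjugation. Your bookkeeping of the degenerate hypotheses matches what the authors leave implicit.
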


Resorting to the calculations for the Hodge and Bott-Chern numbers of manifolds in the Kuranishi family of the Iwasawa manifold (cf. \cite[Appendix]{A}),
we find the following example that neither the deformation invariance of the $(p,0)$- nor $(0,p)$-Bott-Chern numbers
is true when the condition that the $(p,1)$-th mild $\p\bar{\p}$-lemma does not hold on the reference fiber
in Corollary \ref{inv-p0-bc}. It indicates that the condition involved may not be omitted
in order for the deformation invariance of $(p,0)$- and $(0,p)$-Bott-Chern numbers.

Let $\mathbb{I}_3$ be the Iwasawa manifold of complex dimension $3$ with $\eta^1,\eta^2,\eta^3$
denoted by the basis of the holomorphic one form $H^0(\mathbb{I}_3,\Omega^1)$ of $\mathbb{I}_3$, satisfying
the relation \[ d\eta^1 =0,\ d \eta^2=0,\ d\eta^3 = - \eta^1 \w \eta^2. \]
And the convention $\eta^{12\bar{1}\bar{3}}:= \eta^1 \w \eta^2 \w \overline{\eta}^1 \w \overline{\eta}^3$
will be used for simplicity.

\begin{example}[The cases $(p,q)=(2,0)$ and $(0,2)$]\label{ex20}
The injectivity of $\iota^{2,1}_{BC,\p}$ does not hold on $\mathbb{I}_3$
and in these cases $h^{2,0}_{BC}(X_t)$ and $h^{0,2}_{BC}(X_t)$ are deformation variant.
\end{example}

\begin{proof}
It is easy to check that the left-invariant $(2,1)$-form
\[ \p \eta^{3\bar{1}} = - \eta^{12\bar{1}}\]
stands for a non-trivial Bott-Chern class but a trivial class in $H^{2,1}_{\p}(\mathbb{I}_3)$,
which indicates non-injectivity of $\iota^{2,1}_{BC,\p}$.
The deformation variance of $h^{2,0}_{BC}(X_t)$ and $h^{0,2}_{BC}(X_t)$ can be got from \cite[Appendix]{A}.
\end{proof}

Now let us describe our basic philosophy to consider the deformation invariance of Bott-Chern numbers briefly. The
Kodaira-Spencer's upper semi-continuity theorem (\cite[Theorem 4]{KS}) tells us that the function
$$t\longmapsto h^{p,q}_{BC}(X_t)=\dim_{\mathbb{C}}H^{p,q}_{BC}(X_t,\mathbb{C})$$
is always upper semi-continuous for $t\in B$ and thus,
to approach the deformational invariance of $h^{p,q}_{BC}(X_t)$, we only need to obtain the lower
semi-continuity. Here our main strategy is a modified iteration
procedure, originally from \cite{LSY} and developed in
\cite{Sun,SY,RZ,lry}, which is to look for an injective extension map from
$H^{p,q}_{BC}(X_0)$ to $H^{p,q}_{BC}(X_t)$.
More precisely, for the unique harmonic representative $\sigma_0$ of the initial Bott-Chern
cohomology class in $H^{p,q}_{BC}(X_0)$, we try to construct a convergent power series
$$\label{ps} \sigma_t=\sigma_0+\sum_{j+k=1}^\infty t^k t^{\bar{j}}\sigma_{k\bar{j}}\in
A^{p,q}(X_0),$$
with $\sigma_t$ varying smoothly on $t$ such that for each small $t$:
\begin{enumerate}[$(i)$]
\item\label{S1} $e^{\iota_{\varphi}|\iota_{\overline{\varphi}}}(\sigma_t)\in A^{p,q}(X_t)$
is $d$-closed with respect to the differential structure on $X_t$ with the induced family $\varphi$ of Beltrami differentials;
\item\label{S2} The extension map $H^{p,q}_{BC}(X_0) \rightarrow H^{p,q}_{BC}(X_t):[\sigma_0]_{d} \mapsto
[e^{\iota_{\varphi}|\iota_{\overline{\varphi}}}(\sigma_t)]_{d}$ is injective.
\end{enumerate}

Obviously, \eqref{S1} amounts to Theorem \ref{pq-sm-ext}; To guarantee \eqref{S2}, it suffices to prove:
\begin{proposition}\label{pq-pq-1}
If the $d$-extension of $H^{p,q}_{BC}(X_0)$
as in  Theorem \ref{pq-sm-ext} holds for a complex manifold $X_0$,
then the deformation invariance of $h^{p-1,q-1}_{A}(X_t)$
assures that the extension map
$$H^{p,q}_{BC}(X_0) \rightarrow H^{p,q}_{BC}(X_t):[\sigma_0]_{d} \mapsto [e^{\iota_{\varphi}|\iota_{\overline{\varphi}}}(\sigma_t)]_{d}$$
is injective.
\end{proposition}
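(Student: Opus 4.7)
The plan is to establish injectivity of the extension map $\Phi_t : [\sigma_0]_{BC} \mapsto [\tilde\sigma_t]_{BC,X_t}$, where $\tilde\sigma_t := e^{\iota_\varphi|\iota_{\b\varphi}}(\sigma_t)$ is $d$-closed on $X_t$ by Theorem \ref{pq-sm-ext}. The first observation is that Kodaira-Spencer's upper semicontinuity theorem applied to the family of elliptic self-adjoint Aeppli Laplacians $\square_{A,t}$, combined with the assumed deformation invariance of $h^{p-1,q-1}_{A}(X_t)$, forces the Aeppli Green's operator $\G_{A,t}$ on $A^{p-1,q-1}(X_t)$ (and the corresponding harmonic projector) to depend smoothly on $t$. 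Together with the smooth $t$-dependence of $\p_t,\db_t$ and of their adjoints with respect to a smoothly varying Hermitian metric, this produces a one-parameter family of canonical solution operators for the $\p\db$-equation on $(p-1,q-1)$-forms, varying smoothly in $t$.

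Next I would argue by contradiction with a convergent sequence. If $\Phi_t$ fails to be injective for all arbitrarily small $t$, finite-dimensionality of $H^{p,q}_{BC}(X_0)$ lets one extract a sequence $t_n \to 0$ and $L^2$-normalized Bott-Chern classes $[\sigma_0^{(n)}]$ with $\Phi_{t_n}([\sigma_0^{(n)}])=0$ and $[\sigma_0^{(n)}] \to [\sigma_0^\ast] \neq 0$. Hence $\tilde\sigma_{t_n}^{(n)} = \p_{t_n}\db_{t_n}\eta_{t_n}^{(n)}$ for some $\eta_{t_n}^{(n)} \in A^{p-1,q-1}(X_{t_n})$, and by Lemma \ref{ddbar-eq} the canonical minimum-$L^2$-norm solution is
$$\eta_{t_n}^{(n)} \;=\; (\p_{t_n}\db_{t_n})^{*}\G_{BC,t_n}\bigl(\tilde\sigma_{t_n}^{(n)}\bigr) \;=\; \G_{A,t_n}(\p_{t_n}\db_{t_n})^{*}\bigl(\tilde\sigma_{t_n}^{(n)}\bigr),$$
via the commutation $(\p\db)^{*}\G_{BC} = \G_{A}(\p\db)^{*}$ from Lemma \ref{ddbar-eq}. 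The crucial feature is that the right-hand side involves $\G_{A,t}$ acting on $(p-1,q-1)$-forms, which is precisely where the hypothesis guarantees smooth $t$-dependence.

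Finally I would let $n \to \infty$. Since the power series construction of $\sigma_t$ from $\sigma_0$ is $\mathbb{C}$-linear and continuous in a suitable topology, and each operator on the right-hand side above varies smoothly in $t$, one obtains $\tilde\sigma_{t_n}^{(n)} \to \sigma_0^\ast$ and $\eta_{t_n}^{(n)} \to \eta_0^\ast := \G_{A,0}(\p\db)^{*}\sigma_0^\ast$, hence $\sigma_0^\ast = \p\db\,\eta_0^\ast$, contradicting $[\sigma_0^\ast]_{BC,X_0} \neq 0$. The hardest part will be making this double limit (in both $n$ and $t_n$) rigorous: one must establish uniform-in-$t$ estimates on $\G_{A,t}(\p_t\db_t)^{*}$ together with uniform continuity of the extension $[\sigma_0]\mapsto \tilde\sigma_t$ in $\sigma_0$, so that the limit commutes with the solution operator. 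Both ingredients ultimately rest on the smooth dependence of $\G_{A,t}$ on $t$, which is exactly what the deformation invariance of $h^{p-1,q-1}_{A}(X_t)$ provides.
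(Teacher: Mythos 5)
Your proposal is correct and follows essentially the same route as the paper: deformation invariance of $h^{p-1,q-1}_{A}(X_t)$ gives smooth $t$-dependence of $\G_{A,t}$ on $(p-1,q-1)$-forms via Kodaira--Spencer, the $\p_t\db_t$-exact extension is rewritten through the canonical solution formula $e^{\iota_{\varphi}|\iota_{\overline{\varphi}}}(\sigma_t)=\p_t\db_t\G_{A,t}\db_t^*\p_t^*\bigl(e^{\iota_{\varphi}|\iota_{\overline{\varphi}}}(\sigma_t)\bigr)$ using the commutation $(\p\db)^*\G_{BC}=\G_{A}(\p\db)^*$ from Lemma \ref{ddbar-eq}, and letting $t\to 0$ forces $\sigma_0$ to be $\p\db$-exact, a contradiction. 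The only difference is packaging: the paper fixes a single nonzero class and passes to the limit along $t$, whereas you add a sequential-compactness extraction over normalized classes, which is a harmless refinement resting on the same smooth dependence of $\G_{A,t}$.
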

\begin{proof}
Here we follow an idea in \cite[Proposition 3.15]{RZ15}.
Let's fix a family of smoothly varying Hermitian metrics
$\{\omega_t\}_{t \in  B }$ for the infinitesimal
deformation $\pi: \mathcal{X} \rightarrow  B $ of
$X_0$. Thus, if the Aeppli numbers $h^{p-1,q-1}_{A}(X_t)$ are deformation
invariant, the Green's operator $\G_{A,t}$, acting on the
$A^{p-1,q-1}(X_t)$, depends differentiably with respect to $t$ from
\cite[Theorem 7]{KS} by Kodaira and Spencer. Using this, one ensures
that this extension map can not send a nonzero class in
$H^{p,q}_{BC}(X_0)$ to a zero class in $H^{p,q}_{BC}(X_t)$.

If we suppose that
$$e^{\iota_{\varphi(t)}|\iota_{\overline{\varphi(t)}}}(\sigma_t)=\p_t\db_t \eta_t$$
for some $\eta_t\in A^{p-1,q-1}(X_t)$ when $t \in  B
\setminus \{0\}$, the Hodge decomposition of Bott-Chern Laplacian and the
commutativity
$$\G_{BC}(\p\db) = (\p\db) \G_{A}$$
in Lemma \ref{ddbar-eq} yield that
\begin{align*}
e^{\iota_{\varphi(t)}|\iota_{\overline{\varphi(t)}}}(\sigma_t)
 &=\p_t\db_t \eta_t=\big(\mathbb{H}_{BC,t}+ \square_{BC,t} \mathbb{G}_{BC,t}\big)\p_t \db_t(\eta_t)\\
 &=\mathbb{G}_{BC,t}\square_{BC,t}\p_t \db_t(\eta_t)\\
 &=\mathbb{G}_{BC,t}\p_t \db_t\db_t^*\p_t^*\p_t \db_t(\eta_t)\\
 &=\p_t \db_t\mathbb{G}_{A,t}\db_t^*\p_t^*(e^{\iota_{\varphi(t)}|\iota_{\overline{\varphi(t)}}}(\sigma_t)),
\end{align*}
where $\mathbb{H}_{BC,t}$, $\square_{BC,t}$ are the harmonic projectors and
the Bott-Chern Laplacian with respect to $(X_t,\omega_t)$, respectively.
Let $t$ converge to $0$ on both sides of the equality
\[ e^{\iota_{\varphi(t)}|\iota_{\overline{\varphi(t)}}}(\sigma_t)
=\p_t \db_t\mathbb{G}_{A,t}\db_t^*\p_t^*(e^{\iota_{\varphi(t)}|\iota_{\overline{\varphi(t)}}}(\sigma_t)),\]
which turns out that $\sigma_0$ is $\p\db$-exact on the reference fiber
$X_0$. Here we use the fact that the Green's operator $\mathbb{G}_{A,t}$ depends
differentiably with respect to $t$.
\end{proof}

\begin{acknowledgements}
The authors would like to express their gratitude to Professor Kefeng Liu for his constant help,
especially in the formal convergence argument here, and to Professor L. Ugarte for pointing out an example to us.
This work started from the first author's visit to Institut Fourier, Universit\'{e} Grenoble Alpes from March to June 2017,
and was mostly completed during his visit to Institute of Mathematics, Academia Sinica since September 2017.
He would like to thank both institutes for their hospitality and excellent working space.
The third author is grateful to Professors Fangyang Zheng and Bo Guan for their help and interests
during his visit to the Ohio State University in November 2017.
\end{acknowledgements}

\end{document}